\definecolor{darkblue}{rgb}{0, 0, .6}
\definecolor{grey}{rgb}{.7, .7, .7}
\newtheorem{theorem}{Theorem}[chapter] 
\newtheorem{lemma}[theorem]{Lemma}
\newtheorem{proposition}[theorem]{Proposition}
\newtheorem{corollary}[theorem]{Corollary}
\theoremstyle{definition} 
\newtheorem{definition}[theorem]{Definition}
\newtheorem{example}[theorem]{Example}
\newcommand{\A}{\ensuremath{\mathcal A}}
\renewcommand{\to}{\longrightarrow}
\newcommand{\C}{\ensuremath{\mathbb C}}
\newcommand{\R}{\ensuremath{\mathbb R}}
\newcommand{\Sal}{\operatorname{Sal}}
\newcommand{\Real}{\operatorname{Re}}
\newcommand{\Imag}{\operatorname{Im}}
\newcommand{\Bd}{\operatorname{Bd}}
\newcommand{\Int}{\operatorname{Int}}
\newcommand{\Ord}{\operatorname{Ord}}
\newcommand{\Star}{\operatorname{Star}}
\renewcommand{\bar}[1]{\ensuremath{\overline{#1}}}
\begin{document}

\setlength{\parindent}{0pt}


\begin{titlepage}
\ 
\vspace{5cm}

{\huge \textbf{Cell Complexes for Arrangements with\\
Group Actions\footnote{This is a revised version of the author's M.S. thesis, which was directed by Michael J. Falk at Northern Arizona University.  All revisions are cosmetic.}}}

\bigskip

M.S. Thesis, Northern Arizona University, 2000

\bigskip

\bigskip

\href{http://oz.plymouth.edu/~dcernst}{\Large Dana C. Ernst}\\
Plymouth State University\\
Department of Mathematics\\
MSC 29, 17 High Street\\
Plymouth, NH 03264\\
\href{mailto:dcernst@plymouth.edu}{{dcernst@plymouth.edu}}

\end{titlepage}

\pagenumbering{roman}
\pagestyle{plain}


\chapter*{Abstract}\normalsize
\addcontentsline{toc}{chapter}{Abstract}

For a real oriented hyperplane arrangement, we show that the corresponding Salvetti complex is homotopy equivalent to the complement of the complexified arrangement.  This result was originally proved by M. Salvetti.  Our proof follows the framework of a proof given by L. Paris and relies heavily on the notation of oriented matroids.  We also show that homotopy equivalence is preserved when we quotient by the action of the corresponding reflection group.  In particular, the Salvetti complex of the braid arrangement in $\ell$ dimensions modulo the action of the symmetric group is a cell complex which is homotopy equivalent to the space of unlabelled configurations of $\ell$ distinct points.  Lastly, we describe a construction of the orbit complex from the dual complex for all finite reflection arrangements in dimension 2.  This description yields an easy derivation of the so-called ``braid relations" in the case of braid arrangement.

\chapter*{Acknowledgements}\normalsize
\addcontentsline{toc}{chapter}{Acknowledgements}

I would like to thank my advisor Dr. Michael Falk for providing me with a topic 
that completely captured my attention.  I would also like to thank him for 
having an infinite amount of patience with me and allowing me to spend so much 
time in the chair in the corner of his office.  Without him this thesis would 
not have been possible.  I would also like to thank Dr. Janet McShane and Dr. 
Guenther Huck for taking the time to read my thesis and providing me with useful 
feedback.  

\tableofcontents

\listoffigures


\pagenumbering{arabic}

\begin{chapter}{Posets and Cell Complexes}

\begin{paragraph}{Cell Complexes}
A topological space $E$ is called a \emph{cell} of dimension $n$ if it is 
homeomorphic to the unit disc, $B^n$.  In this case we say that $E$ is an 
$n$-cell.  A space $F$ of $\R^\ell$ is called an \emph{open cell} of dimension 
$n$ if it is homeomorphic to $\Int(B^n)$.  In this case we say that $F$ is an 
open $n$-cell.  We will refer to a $0$-cell as a vertex.  

\begin{definition}
We define a (finite) \emph{cell complex} to be a Hausdorff topological 
space $X$ together with a finite collection $F_0, \ldots, F_n$ of disjoint open 
cells in $X$ such that
\begin{enumerate}[label=\rm{(\arabic*)}]
\item  $X = \bigcup_{k=0}^n F_k$.
\item For each open $n$-cell $F_k$, there exists a continuous map 	
	$$f_k: B^n \to X$$
that maps $\Int(B^n)$ homeomorphically onto $F_k$ and carries $\Bd(B^n)$ into a 
union of open cells, each of dimension less than $n$.
\end{enumerate}
\noindent Note that what we are calling a cell complex is often referred to as a 
finite CW complex.
\end{definition}

\begin{example}\label{ex1.1}
The quotient space formed from $B^\ell$ by collapsing $\Bd(B^\ell)$ to a point 
is homeomorphic to $S^\ell$.  Therefore, $S^\ell$ can be expressed as a cell 
complex having one open $\ell$-cell and one $0$-cell, and no other cells at all.
\end{example}

A cell complex $X$ for which the maps $f_k$ can be taken to be homeomorphisms is 
called a \emph{regular cell complex}.  

\begin{example}
The cell complex described in Example \ref{ex1.1} is not a regular cell complex. 
However, we can give a regular cell complex structure to $S^\ell$.  Consider 
$S^1$.  We can subdivide $S^1$ into two open $1$-cells and two $0$-cells.  See 
Figure \ref{s1}.  The resulting cell complex is regular.  Likewise, consider 
$S^2$.  Take $S^1$ to be the equator of $S^2$ and let $S^1$ have the regular 
cell structure described above.  Then $S^2$ will be a regular cell complex 
having two open $2$-cells, two open $1$-cells, and two $0$-cells.  Similarly, we 
can give $S^\ell$ a regular cell complex structure by subdividing $S^\ell$ into 
two open $\ell$-cells, i.e., the open upper and lower hemispheres, and giving 
the equator the regular cell structure of $S^{\ell-1}$.
\begin{figure}[h]
\begin{center}

\includegraphics[width=4cm]{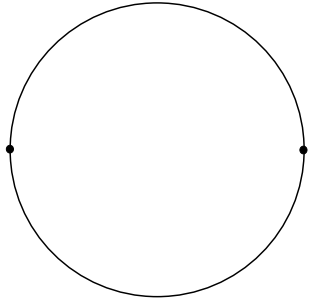}

\caption{$S^1$ as a regular cell complex}
\label{s1}
\end{center}
\end{figure}
\end{example}

We define the \emph{$k$-skeleton} of $X$ to be the subspace $X^k$ of $X$ that is 
the union of the open cells of $X$ of dimension at most $k$.  Note that if the 
highest dimensional cells of $X$ are of dimension $n$, then $X=X^n$. Also, we 
have
	$$X^0 \subset X^1 \subset \cdots \subset X^n=X.$$
Note that $X^0$ is the vertex set of $X$.  Furthermore, note that $X^k-X^{k-1}$ 
is a disjoint union of open $k$-cells, possibly empty, for each $k$.

\bigskip

We sometimes refer to open cells as \emph{faces}.  Each open cell $F$ of $X$ is 
a face of $X$.  We denote the set of faces of $X$ by $\mathcal{F}(X)$, which is 
partially ordered via
	$$F' \leq F \ \text{iff} \ F' \subseteq \bar{F},$$
for all $F',F \in \mathcal{F}(X)$.  We will refer to $\mathcal{F}(X)$ as the 
face poset of $X$.

\begin{example}
Let $X$ be the regular cell complex given in Figure \ref{cell}.  Then the 
cells of $X$ determine the face poset, $\mathcal{F}(X)$, given in Figure 
\ref{poset}.  
\begin{figure}[h]
\begin{center}

\includegraphics[width=6cm]{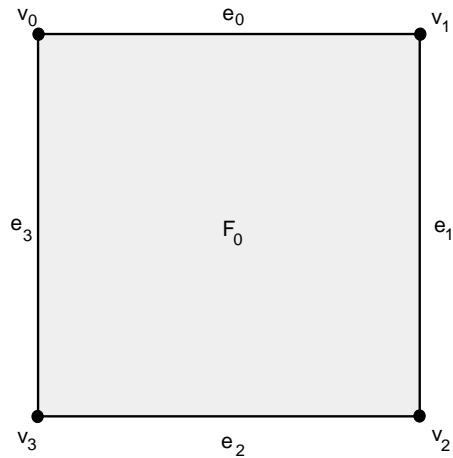}

\caption{A regular cell complex}
\label{cell}
\end{center}
\end{figure}

\begin{figure}[h]
\begin{center}

\includegraphics[width=7cm]{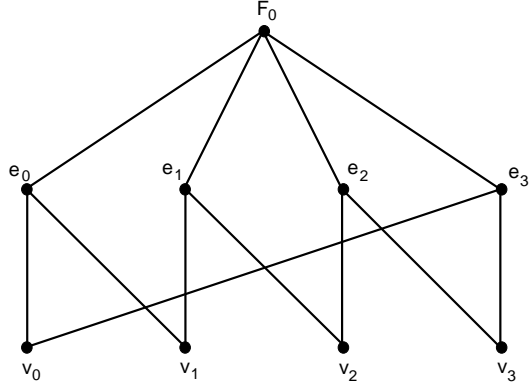}

\caption{Face poset for a regular cell complex}
\label{poset}
\end{center}
\end{figure}
\end{example}

In \cite{BJ}, Bj{\"o}rner characterizes face posets of regular cell 
complexes.  One consequence of his result is stated below without proof.

\begin{proposition}
Let $X$ be a regular cell complex.  Then $X$ is uniquely determined up to 
cellular homeomorphism by its face poset $\mathcal{F}(X)$. \hfill $\Box$
\end{proposition}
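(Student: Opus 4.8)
The plan is to reconstruct $X$ from the combinatorics of $\mathcal{F}(X)$ alone, by showing that $X$ is cellularly homeomorphic to the geometric realization of the order complex of its face poset. For a finite poset $P$, let $\Delta(P)$ be the \emph{order complex} of $P$: the abstract simplicial complex whose vertices are the elements of $P$ and whose simplices are the chains of $P$. For a face $F$ write $\mathcal{F}(X)_{\le F}=\{F'\in\mathcal{F}(X):F'\le F\}$ and $\mathcal{F}(X)_{<F}$ for the corresponding strict ideal. The key step is to prove the following claim, which amounts to saying that $|\Delta(\mathcal{F}(X))|$ is a barycentric subdivision of $X$: \emph{for every finite regular cell complex $X$ there is a homeomorphism $h_X\colon X\to|\Delta(\mathcal{F}(X))|$ with $h_X(\bar F)=|\Delta(\mathcal{F}(X)_{\le F})|$ for every face $F$.} Note that the claim already displays the cell structure of $X$ as recoverable from the poset: the cells are indexed by the elements of $\mathcal{F}(X)$, and the closed cell attached to $F$ is the realization of the order complex of the principal ideal $\mathcal{F}(X)_{\le F}$.

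I would prove the claim by induction on $n=\dim X$, the case $n=0$ being immediate. For the inductive step, first record the structural facts used: every skeleton, every closed cell $\bar F$, and every cell boundary $\partial\bar F:=\bar F\setminus F$ is again a finite regular cell complex (a closed union of cells, hence a subcomplex); regularity makes the characteristic map $f_F\colon B^n\to\bar F$ a homeomorphism, so for an $n$-cell $F$ one gets $(\bar F,\partial\bar F)\cong(B^n,S^{n-1})$, and $\partial\bar F$ is an $(n-1)$-dimensional subcomplex of $X^{n-1}$ with face poset $\mathcal{F}(X)_{<F}$. Apply the inductive hypothesis to $X^{n-1}$ to obtain $g:=h_{X^{n-1}}$. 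For each $n$-cell $F$, the map $g$ carries $\partial\bar F$ homeomorphically onto $|\Delta(\mathcal{F}(X)_{<F})|$; because $F$ is the top element of $\mathcal{F}(X)_{\le F}$, the order complex $\Delta(\mathcal{F}(X)_{\le F})$ is the simplicial cone over $\Delta(\mathcal{F}(X)_{<F})$ with apex the vertex $F$, so $|\Delta(\mathcal{F}(X)_{\le F})|$ is a topological cone over $g(\partial\bar F)$; and $\bar F$, being a ball with boundary $\partial\bar F$, is (by radial coning transported through $f_F$) a cone over $\partial\bar F$ relative to $\partial\bar F$. Coning the homeomorphism $g|_{\partial\bar F}$ therefore extends it to a homeomorphism $h_F\colon\bar F\to|\Delta(\mathcal{F}(X)_{\le F})|$. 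Finally one glues: distinct open $n$-cells are disjoint, so $\bar F\cap\bar{F'}\subseteq X^{n-1}$ and $\bar F\cap X^{n-1}=\partial\bar F$, and on all these overlaps $h_F$, $h_{F'}$ and $g$ agree by construction; the pasting lemma (there are finitely many closed cells) assembles them into a continuous bijection $h_X$, which is a homeomorphism since $X$ is compact and $|\Delta(\mathcal{F}(X))|$ is Hausdorff, and which restricts correctly to every closed cell by construction together with the inductive hypothesis.

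Granting the claim, the proposition follows at once. Let $X$ and $Y$ be finite regular cell complexes and let $\varphi\colon\mathcal{F}(X)\to\mathcal{F}(Y)$ be an isomorphism of posets. Then $\varphi$ induces a simplicial isomorphism $\Delta(\mathcal{F}(X))\to\Delta(\mathcal{F}(Y))$, hence a homeomorphism $|\varphi|\colon|\Delta(\mathcal{F}(X))|\to|\Delta(\mathcal{F}(Y))|$ that carries $|\Delta(\mathcal{F}(X)_{\le F})|$ onto $|\Delta(\mathcal{F}(Y)_{\le\varphi(F)})|$ for every $F$. Then $\psi:=h_Y^{-1}\circ|\varphi|\circ h_X\colon X\to Y$ is a homeomorphism which, by the claim, sends $\bar F$ onto $\overline{\varphi(F)}$ for every face $F$; removing the images of the lower closed cells, it sends the open cell $F$ onto the open cell $\varphi(F)$. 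Thus $\psi$ is a cellular homeomorphism, and $X$ is determined up to cellular homeomorphism by $\mathcal{F}(X)$.

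The one genuinely substantive point is the claim, and inside it the step that presents $\bar F$ as a cone over $\partial\bar F$ relative to $\partial\bar F$ and then cones the boundary homeomorphism; this is exactly where regularity is indispensable, since for a non-regular complex a closed cell need not be a ball and its ``barycentric subdivision'' need not be a genuine subdivision, so the induction breaks down — for instance, the construction applied to the one-cell structure on $S^\ell$ of Example \ref{ex1.1} would produce an interval rather than a sphere. The remaining details — that $\partial\bar F$ is a subcomplex with the stated face poset, and that the cone homeomorphisms on the various $n$-cells glue consistently with $g$ along shared boundaries — are routine bookkeeping. All of this is precisely the content one extracts from Bj\"orner's characterization of the face posets of regular cell complexes cited above, which is why the statement appears here without a self-contained proof.
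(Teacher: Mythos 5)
The paper states this proposition without proof, deferring to Bj\"orner's characterization of face posets of regular cell complexes, so there is no argument in the text to compare against. Your proof supplies exactly the standard argument underlying that citation, and it is correct: you recover $X$ up to cellular homeomorphism as $|\Delta(\mathcal{F}(X))|$ by induction on dimension, observing that $\Delta(\mathcal{F}(X)_{\le F})$ is the simplicial cone with apex $F$ over $\Delta(\mathcal{F}(X)_{<F})$, using regularity to identify $(\bar F,\partial\bar F)$ with $(B^n,S^{n-1})$ so the boundary homeomorphism can be coned, and pasting the finitely many closed pieces. The deduction of the proposition from the claim, by transporting a poset isomorphism to a simplicial isomorphism of order complexes and conjugating, is also correct. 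The one point you lean on without proof is the assertion, stated parenthetically, that each $\bar F$ (and hence each $\partial\bar F$ and each skeleton) is a subcomplex, i.e.\ a union of open cells; for regular CW complexes this is a genuine theorem rather than an immediate consequence of the definition, and it is exactly the kind of preliminary fact one establishes en route to Bj\"orner's characterization. Since the paper itself already presupposes this in defining $\mathcal{F}(X)$ and ordering faces by $F'\le F\Leftrightarrow F'\subseteq\bar F$, treating it as known here is reasonable, but it is worth noting that it fails for non-regular complexes (which is another way of seeing where regularity enters, in addition to the coning step you already highlight).
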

\end{paragraph}

\begin{paragraph}{Concrete Simplicial Complexes}

Now, we will discuss a special class of regular cell complexes, called 
simplicial complexes.  First, recall that a set $\{v_0, \ldots, v_n\}$ of points 
in a vector space $V$ is said to be \emph{geometrically independent} iff the 
vectors
	$$v_1-v_0, \ldots, v_n-v_0$$
are linearly independent.  So, two distinct points in $\R^\ell$ form 
a geometrically independent set, as do three non-collinear points, four 
non-coplanar points, and so on.  

\bigskip

If $\{v_0, \ldots, v_n\}$ is a geometrically independent set of points in 
$\R^\ell$, then we define the \emph{concrete $n$-simplex} $\sigma$ spanned by 
$v_0, \ldots, v_n$ to be the convex hull of $v_0, \ldots, v_n$, denoted 
	$$v_0 \vee \cdots \vee v_n.$$
The points $v_0, \ldots, v_n$ are called the vertices of $\sigma$.  

\bigskip

In low dimensions, one can picture a concrete simplex easily. A $0$-simplex is 
just a point, i.e., a vertex.  A $1$-simplex spanned by $v_0$ and $v_1$ is just 
the line segment joining $v_0$ and $v_1$.  A $2$-simplex spanned by $v_0$, 
$v_1$, and $v_2$ is the solid triangle having these three points as vertices.  
Similarly, a $3$-simplex is a tetrahedron.  For $n \geq 4$, an $n$-simplex is 
the $n$-dimensional analogue of a tetrahedron.

\bigskip

Any simplex spanned by a subset of $\{v_0, \ldots, v_n\}$, where $v_0, \ldots, 
v_n$ are the vertices of a simplex $\sigma$, is called a \emph{face} of 
$\sigma$.  The union of the faces of $\sigma$ different from $\sigma$ itself is 
called the \emph{boundary} of $\sigma$ and denoted $\Bd(\sigma)$.  The 
\emph{interior} of $\sigma$ is defined by the equation 
	$$\Int(\sigma) = \sigma - \Bd(\sigma).$$  
The interior of a simplex $\sigma$ is called an \emph{open simplex}.  Note that 
if the vertices $v_0, \ldots, v_n$ span a simplex $\phi$, then we will sometimes 
use
	$$v_0 \vee \cdots \vee v_n$$
to denote the open simplex spanned by $v_0, \ldots, v_n$, making sure to clarify 
the meaning.

\begin{definition}
A \emph{concrete simplicial complex} $\Delta$ in $\R^\ell$ is a collection of 
simplices in $\R^\ell$ such that
\begin{enumerate}[label=\rm{(\roman*)}]
\item  Every face of a simplex of $\Delta$ is in $\Delta$.
\item  The intersection of any two simplexes of $\Delta$ is a face of each 
of them.
\end{enumerate}
\end{definition}

\begin{example}
The collection $\Delta_1$ pictured in Figure \ref{delta}(a), consisting of two 
$2$-simplices with an edge in common, along their faces, is a simplicial 
complex.  However, the collection $\Delta_2$ pictured in Figure \ref{delta}(b) 
is not a simplicial complex.
\begin{figure}[h]
\begin{center}

\mbox{\subfigure[$\Delta_1$]{\includegraphics[width=5cm]{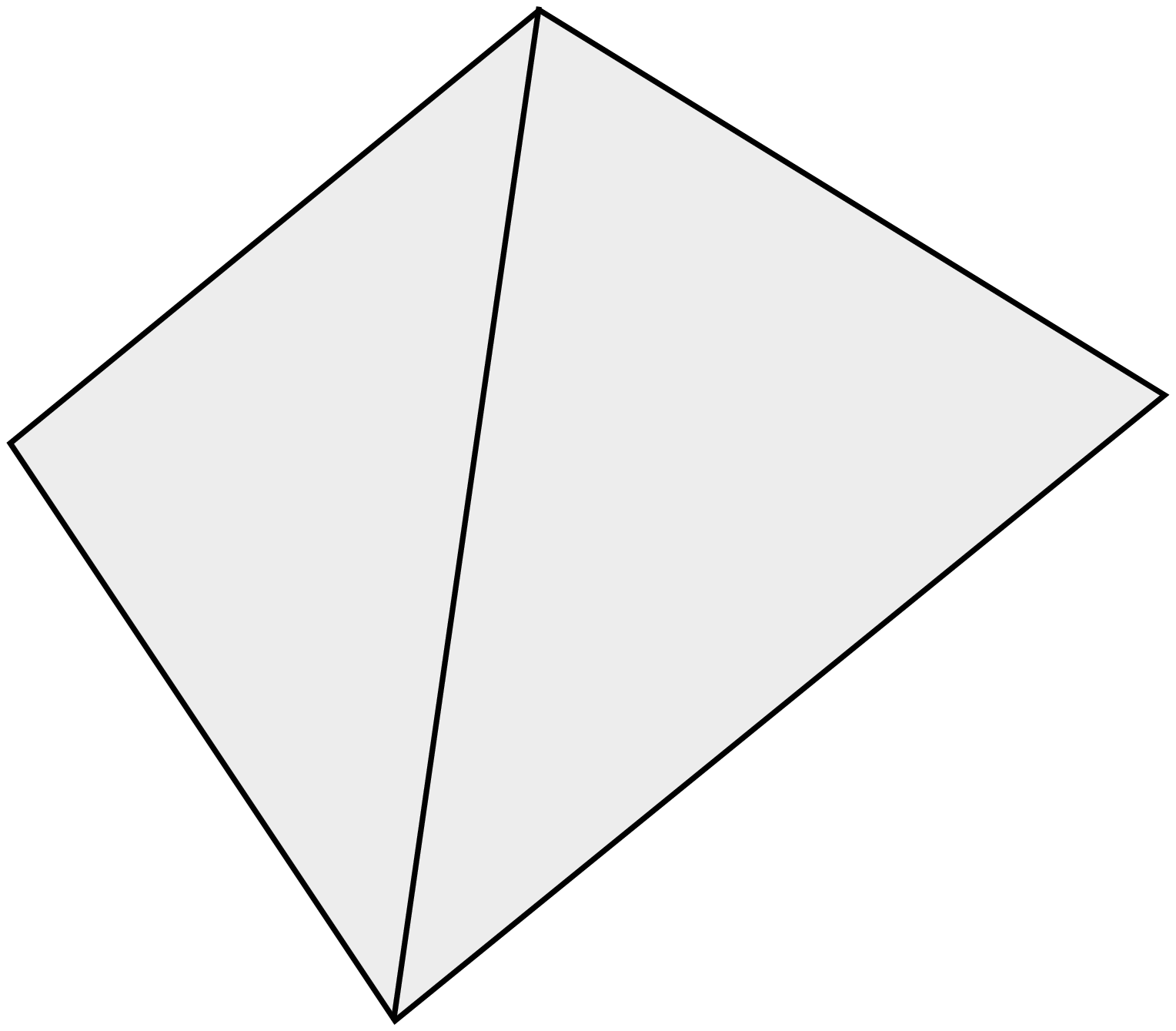}}\quad
\subfigure[$\Delta_2$]{\includegraphics[height=5cm]{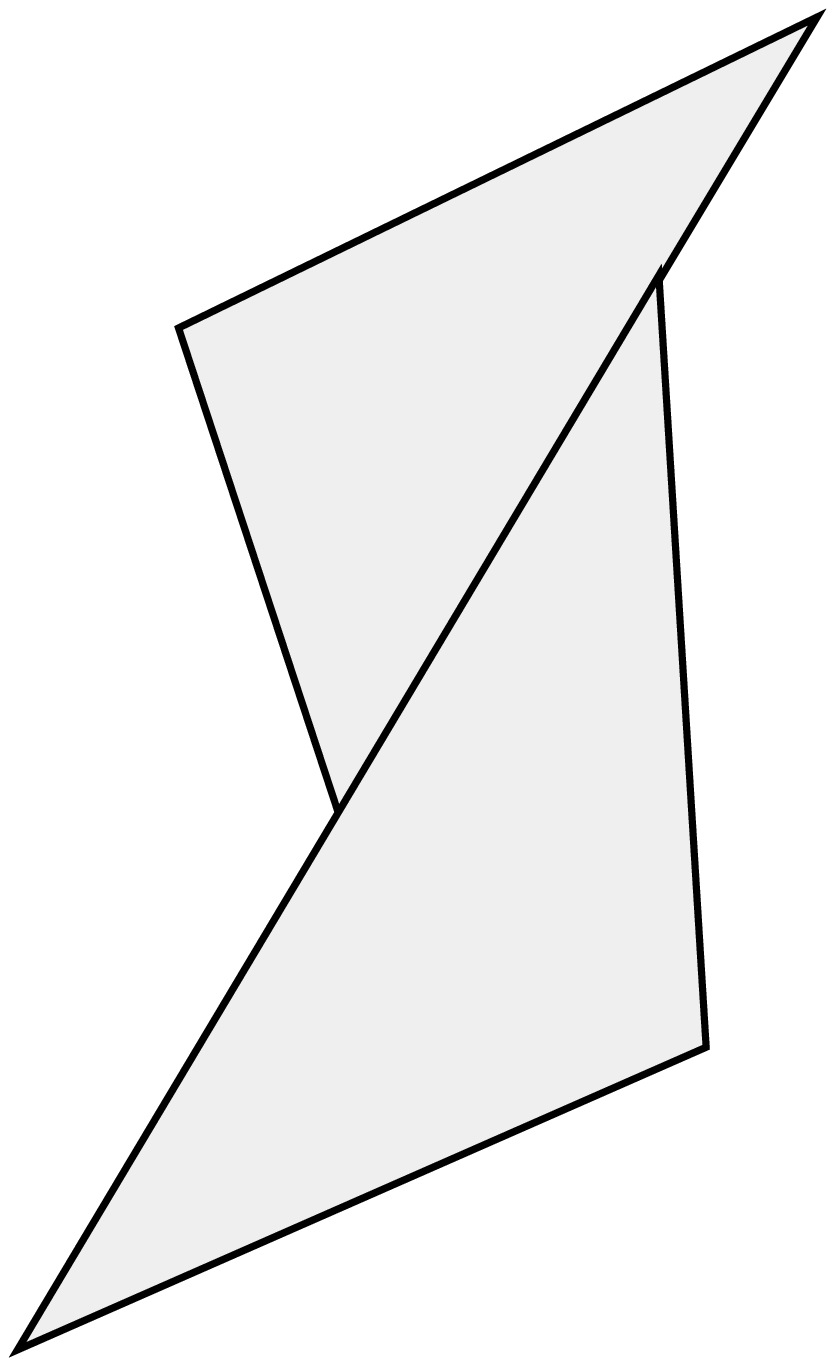}}}

\caption{Collections of simplices}
\label{delta}
\end{center}
\end{figure}
\end{example}

It is clear that a simplicial complex $\Delta$ is a regular cell complex, where 
the cells are the simplices.  The collection of $0$-simplices of $\Delta$ are 
the vertices of $\Delta$, i.e., $\Delta^0$.  We will abuse notation and use 
$\Delta$ to refer to both the simplicial complex $\Delta$ and the subset of 
$\R^\ell$ that is the union of the simplices of $\Delta$.  Note that the same 
subset of $\R^\ell$ can have different simplicial structures.

\begin{definition}
If $v$ is a vertex of a simplicial complex $\Delta$, then the \emph{star} of $v$ 
in $\Delta$, denoted $\Star(v)$, is the union of the interiors of those 
simplices of $\Delta$ that have $v$ as a vertex.
\end{definition}

\begin{example}
Let $\Delta$ be the simpicial complex given in Figure \ref{star}(a).  Then the 
star of the vertex $v_0$ is shown in Figure \ref{star}(b).
\begin{figure}[h]
\begin{center}

\mbox{\subfigure[$\Delta$]{\includegraphics[width=5cm]{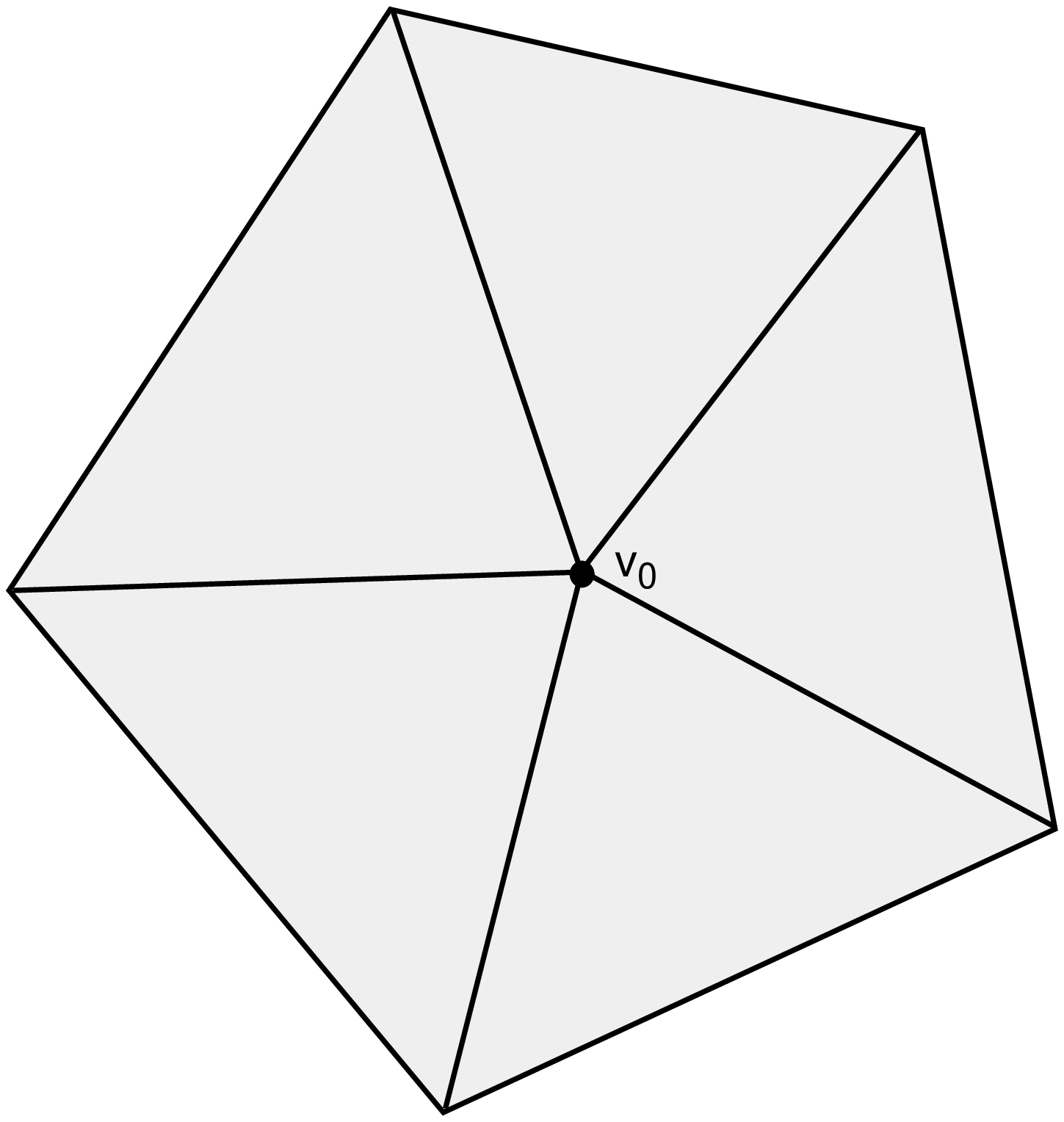}}\quad
\subfigure[$\Star(v_0)$]{\includegraphics[width=5cm]{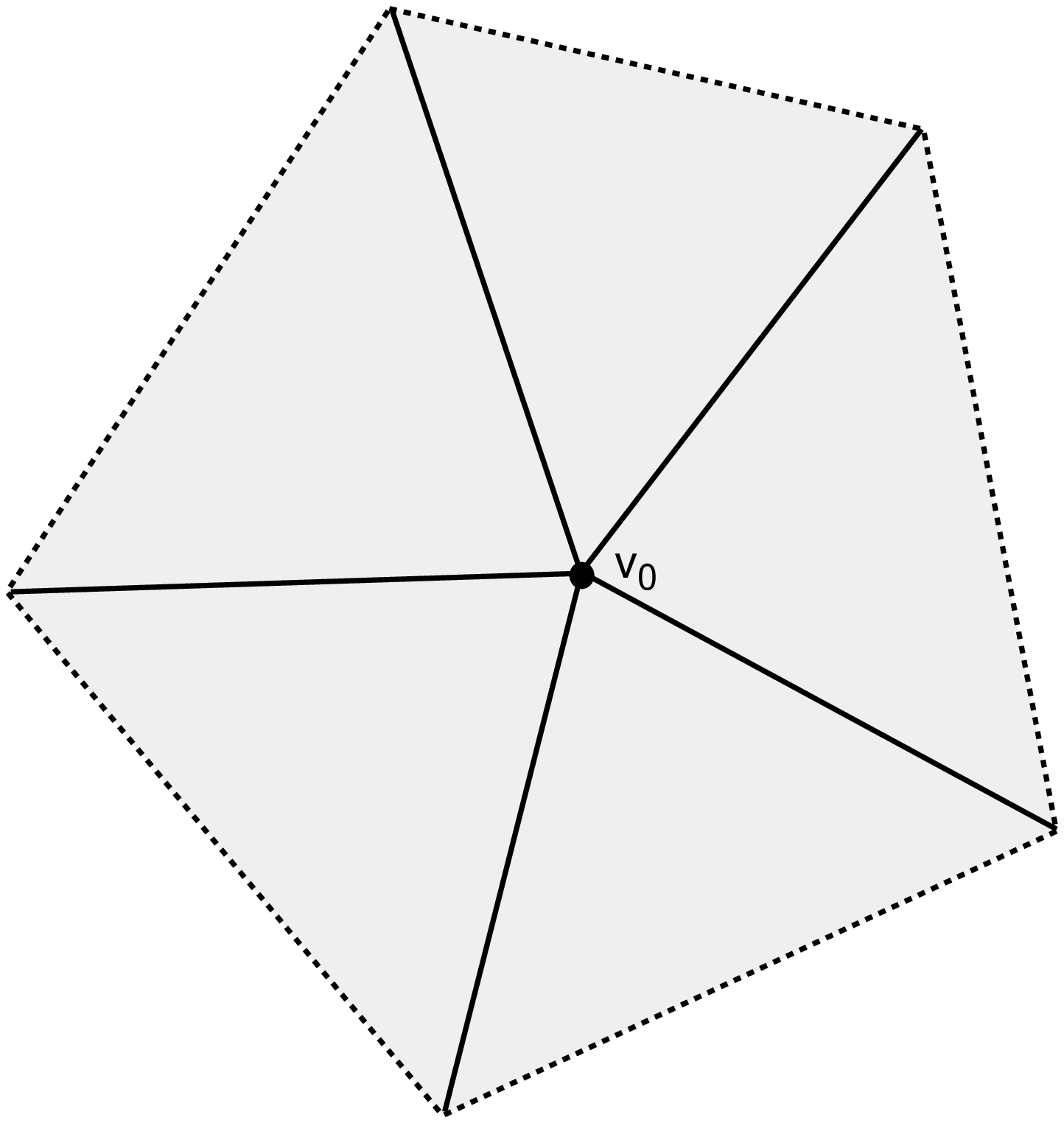}}}

\caption{The star of a vertex}
\label{star}
\end{center}
\end{figure} 
\end{example}

Since a simplicial complex $\Delta$ is a regular cell complex, the simplices of 
$\Delta$ subdivide $\Delta$ into faces.  Each simplex $\sigma$ of $\Delta$ is a 
face of $\Delta$.  Again, we denote the set of faces of $\Delta$ by 
$\mathcal{F}(\Delta)$.   The partial ordering on the face poset 
$\mathcal{F}(\Delta)$ is the same as before.  Furthermore, a simplex $\sigma$ in 
$\mathcal{F}(\Delta)$ is uniquely determined by its set of vertices.

\end{paragraph}

\begin{paragraph}{Abstract Simplicial Complexes}

The partial ordering on simplices of a simplicial complex together with the last 
remark above leads us to the 
following, more general, definition of simplicial complex.

\begin{definition}
An \emph{abstract simplicial complex} is a collection $\mathcal{S}$ of finite 
nonempty sets, such that if $A$ is an element of $\mathcal{S}$, then so is every 
nonempty subset of $A$.
\end{definition}

The element $A$ of $\mathcal{S}$ is called a simplex of $\mathcal{S}$.  If $A$ 
consists of $n+1$ elements, then $A$ is an $n$-simplex, and is said to have 
dimension $n$.  Each nonempty subset of $A$ is called a face of $A$.  By 
hypothesis, each face of $A$ is also a simplex.  The vertex set of $\mathcal{S}$ 
is the union of all the $0$-simplices of $\mathcal{S}$, where the $0$-simplices 
are the singletons of $\mathcal{S}$.  

\bigskip

Every abstract simplicial complex is partially ordered by set inclusion.  We 
say that that two abstract simplicial complexes $\mathcal{S}$ and 
$\mathcal{T}$ are isomorphic if they are isomorphic as posets.

\begin{definition}
Let $\Delta$ be a concrete simplicial complex.  Define $V(\Delta)$ to be the 
collection of all subsets $\{v_0, \ldots, v_n\}$ of $\Delta^0$ such that 
the vertices $v_0, \ldots, v_n$ span an $n$-simplex of $\Delta$.  The 
collection $V(\Delta)$ is called the \emph{vertex scheme} of $\Delta$, and is 
partially ordered by inclusion.
\end{definition}

It should be clear that the vertex scheme for a concrete simplical complex 
$\Delta$ satisfies the criteria for being an abstract simplicial complex.  That 
is, $V(\Delta)$ is an abstract simplicial complex.

\begin{example}
Let $\Delta$ be the concrete simplicial complex of Figure \ref{vertscheme}(a).  
Then the vertex scheme $V(\Delta)$ is given in Figure \ref{vertscheme}(b).
\begin{figure}[h]
\begin{center}

\mbox{\subfigure[$\Delta$]{\includegraphics[width=5cm]{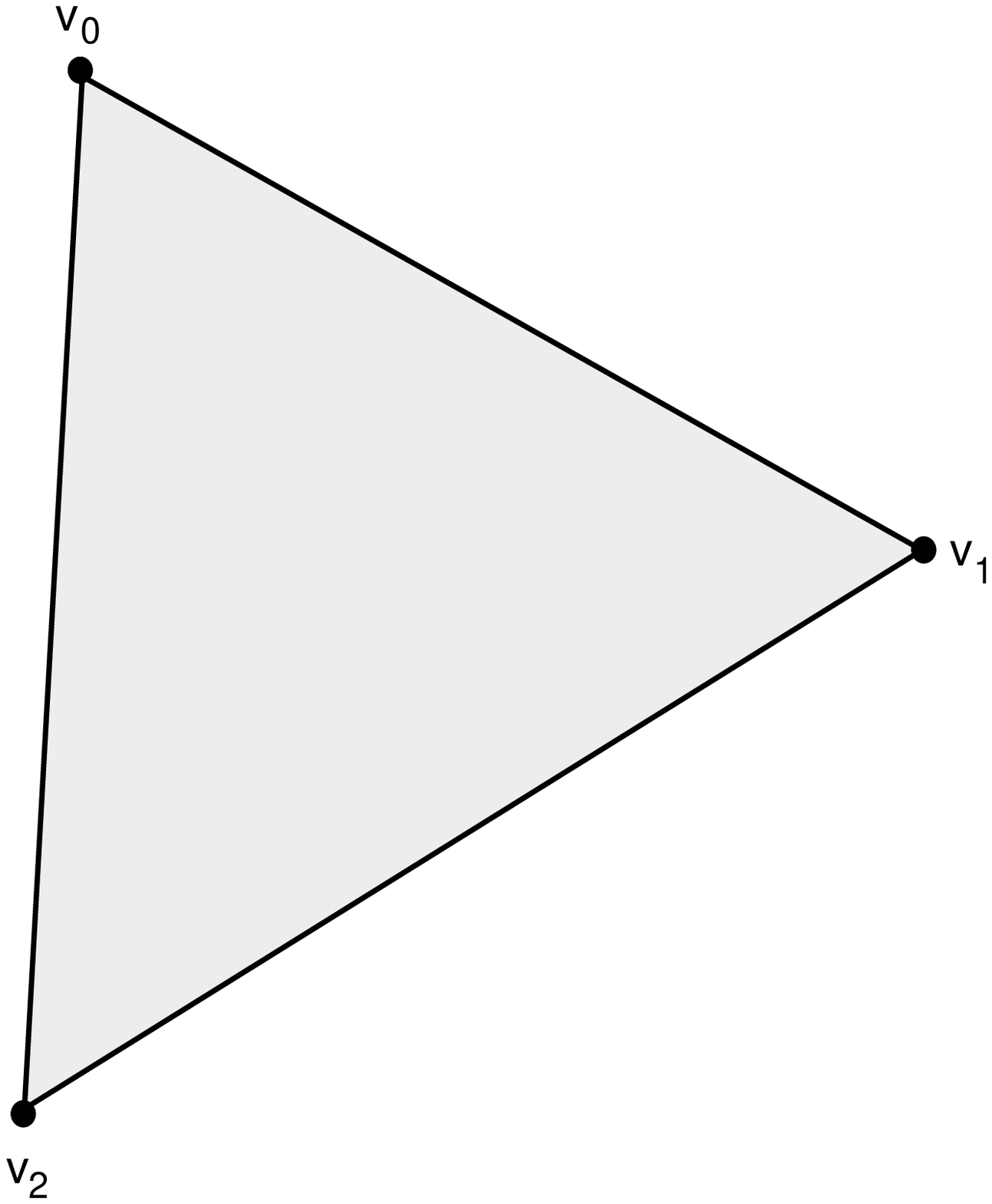}}\quad
\subfigure[$V(\Delta)$]{\includegraphics[width=7cm]{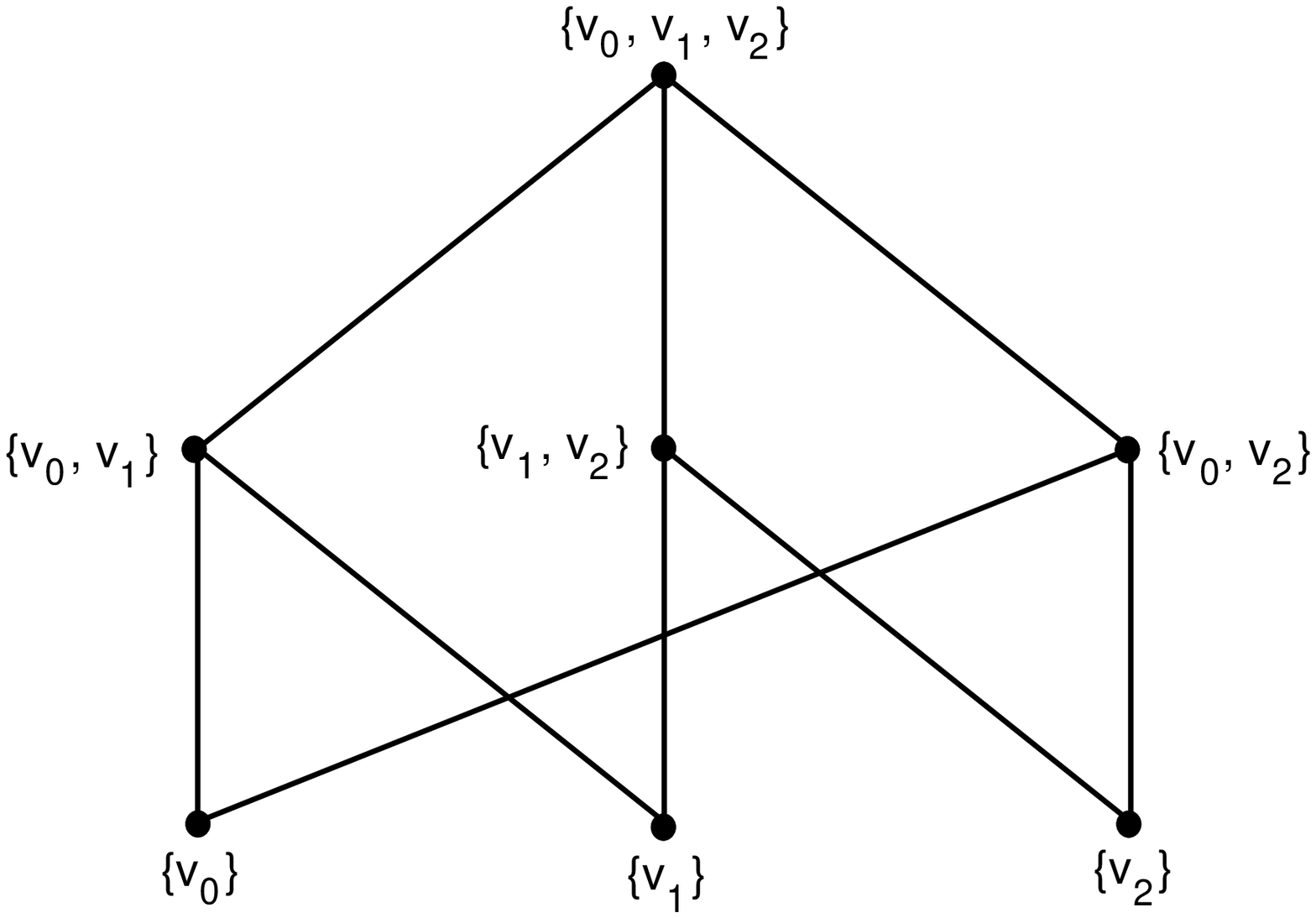}}}

\caption{A simplicial complex and its vertex scheme}
\label{vertscheme}
\end{center}
\end{figure}
\end{example}

Since $\{v_0, \ldots, v_n\}$ is an element of $V(\Delta)$ only when $v_0, 
\ldots, v_n$ span a simplex of $\Delta$, it should be clear that 
$\mathcal{F}(\Delta)$ and $V(\Delta)$ are isomorphic as posets. For this reason, 
we will consider $\mathcal{F}(\Delta)$ to be an abstract simplicial complex. 

\bigskip

Not only is $\mathcal{F}(\Delta)$ an example of an abstract simplicial complex,
it is the crucial example, as the following proposition shows.

\begin{proposition}
Every abstract simplicial complex $\mathcal{S}$ is isomorphic to the vertex 
scheme of some concrete simplicial complex $\Delta$.  That is, $\mathcal{S}$ is 
isomorphic to the face poset of some concrete simplicial complex $\Delta$.  
Furthermore, $\Delta$ is uniquely determined up to simplicial homeomorphism.
\end{proposition}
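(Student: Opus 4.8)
The plan is to prove this in two stages: first construct a concrete simplicial complex $\Delta$ whose vertex scheme is isomorphic (as a poset) to $\S$, and then argue the uniqueness up to simplicial homeomorphism. For the construction, let $\{a_1, \dots, a_N\}$ be the vertex set of $\S$, i.e. the union of all singletons of $\S$. I would send each vertex $a_j$ to the $j$-th standard basis vector $e_j \in \R^N$. Since the vectors $e_1, \dots, e_N$ are in ``general position'' in the sense that every subset of them is geometrically independent (any subset of a linearly independent set is linearly independent, and these particular points have the property that differences of distinct ones stay independent), every simplex $A = \{a_{j_0}, \dots, a_{j_n}\} \in \S$ determines a genuine concrete $n$-simplex $\sigma_A = e_{j_0} \vee \cdots \vee e_{j_n}$ in $\R^N$. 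Define $\Delta = \{\sigma_A : A \in \S\}$. By construction there is an obvious bijection $A \leftrightarrow \sigma_A$, and it is order-preserving in both directions: $A \subseteq B$ in $\S$ iff the vertices of $\sigma_A$ are among the vertices of $\sigma_B$ iff $\sigma_A$ is a face of $\sigma_B$. This handles the required poset isomorphism $\S \cong V(\Delta) \cong \mathcal{F}(\Delta)$, once we check $\Delta$ really is a concrete simplicial complex.

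Checking the two axioms of a concrete simplicial complex is the core of the first stage. Axiom (i), closure under faces, is immediate: a face of $\sigma_A$ is spanned by a nonempty subset of the vertices of $\sigma_A$, which corresponds to a nonempty subset of $A$, which lies in $\S$ by hypothesis. Axiom (ii) is the place where one must be slightly careful: given $\sigma_A$ and $\sigma_B$, I claim $\sigma_A \cap \sigma_B = \sigma_{A \cap B}$ (interpreting $\sigma_\emptyset = \emptyset$), and that this is a face of each. The inclusion $\sigma_{A\cap B} \subseteq \sigma_A \cap \sigma_B$ is clear. For the reverse, a point in $\sigma_A \cap \sigma_B$ has a unique expression as a convex combination of the $e_j$'s (uniqueness of barycentric coordinates, valid because the $e_j$ involved are geometrically independent), and the support of those coordinates must lie in the index set of $A$ and also in that of $B$, hence in $A \cap B$. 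So $\sigma_A \cap \sigma_B = \sigma_{A\cap B}$, which is spanned by a subset of the vertices of $\sigma_A$ and hence is a face of $\sigma_A$, and symmetrically a face of $\sigma_B$. This is the step I expect to be the main obstacle, not because it is deep but because it requires the uniqueness-of-barycentric-coordinates argument to be pinned down carefully — this is exactly where geometric independence is used in an essential way.

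For the uniqueness statement, suppose $\Delta$ and $\Delta'$ are two concrete simplicial complexes with isomorphic vertex schemes, via a poset isomorphism that restricts to a bijection $\phi$ on vertices. I would extend $\phi$ to a map $|\Delta| \to |\Delta'|$ simplex by simplex: on each simplex $\sigma = v_0 \vee \cdots \vee v_n$ of $\Delta$, define the extension affinely by sending the convex combination $\sum t_i v_i$ to $\sum t_i \phi(v_i)$; this lands in the simplex $\phi(v_0) \vee \cdots \vee \phi(v_n)$ of $\Delta'$, which exists precisely because $\phi$ preserves the scheme. These piecewise-defined maps agree on overlaps because the overlap of two simplices is a common face (axiom (ii)) and the affine maps are determined by their values on vertices, so they glue to a continuous map; running the same argument with $\phi^{-1}$ gives a continuous inverse. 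Each piece is an affine isomorphism onto its image, so the glued map is the desired simplicial homeomorphism. A brief remark that $\Delta$ may be taken to live in $\R^N$ with $N = |\,\text{vertex set of } \S\,|$ (or, with more care, in $\R^{2d+1}$ where $d = \dim \S$, though that refinement is not needed here) would round out the statement, but the barycentric-coordinate bookkeeping in axiom (ii) remains the crux.
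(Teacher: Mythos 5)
Your construction is identical to the paper's: send the abstract vertices to standard basis vectors in Euclidean space and span a concrete simplex exactly when the corresponding vertex set is an abstract simplex. The paper's proof is terse and simply asserts that this works, whereas you verify the simplicial-complex axioms (in particular the intersection axiom via uniqueness of barycentric coordinates) and prove the uniqueness clause, so your argument is a correct and more complete version of the same approach.
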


\begin{proof}
Say that $v_0, \ldots, v_n$ are the vertices of $\mathcal{S}$.  We can easily 
form a realization of $\mathcal{S}$ in $\R^{n+1}$ in the following way.  Let 
$e_k$ denote the $k^{th}$ standard basis vector in $\R^{n+1}$.  Form a concrete 
simplex spanned by a subset of $\{e_0, \ldots, e_n\}$ exactly when the 
corresponding subset of $\{v_0, \ldots, v_n\}$ is an abstract simplex.  The 
resulting concrete simplicial complex $\Delta$ is a realization of 
$\mathcal{S}$.  That is, $V(\Delta)$ is isomorphic to $\mathcal{S}$.
\end{proof}

Note that $\Delta$ in the proof above is a subcomplex of the standard 
$n$-simplex in $\R^{n+1}$. 

\begin{definition}
If the abstract simplicial complex $\mathcal{S}$ is isomorphic to the face poset 
of the concrete simplicial complex $\Delta$, then we call $\Delta$ a realization 
of $\mathcal{S}$.  $\Delta$ is unique up to simplicial homeomorphism.  We will 
denote a realization of $\mathcal{S}$ by $|\mathcal{S}|$.
\end{definition}

\end{paragraph}

\begin{paragraph}{The Order Complex}

Given any poset $P$, we can form its \emph{order complex}, denoted $\Ord(P)$.  
This is an abstract simplicial complex whose vertices are the elements of $P$ 
and whose simplices are the chains 
	$$p_0  < \cdots < p_n$$
in $P$.  We will use the notation
	$$\Omega = (\omega_0 < \cdots < \omega_n)$$
to denote a simplex of the order complex.  Note that $\Ord(P)$ is an abstract 
simplicial complex since any subchain of a chain is also a chain.  Also, note 
that the realization $|\Ord(P)|$ is a concrete simplicial complex.

\bigskip

In the special case where $\mathcal{F}(X)$ is the face poset for a regular cell 
complex $X$, the order complex of $\mathcal{F}(X)$ is called the 
\emph{barycentric subdivision} of $X$, denoted $X'$.  We can form a realization 
of $X'$ in the following way.  

\bigskip

Let $F$ be a cell of a regular cell complex $X$.  Then $\bar{F}$ is homeomorphic 
to $B^\ell$ for some $\ell$.  Also, we can give $B^\ell$ a regular cell 
structure which is isomorphic to the regular cell structure of $\bar{F}$.  Let 
$B_{F'}$ denote the face of $B^\ell$ which corresponds to the face $F'$ of 
$\bar{F}$.  For every open face $F'$ of $\bar{F}$, select a point $x(F') \in 
F'$, called the \emph{barycenter} of $F'$.  Then each 
point $x(F')$ determines a point $x(B_{F'}) \in B_{F'}$, since there is a 
homeomorphism between $F'$ and $B_{F'}$.  Whenever 
	$$F_0 < \cdots < F_n$$
is a chain of faces of $\bar{F}$ form the simplex 
	$$\phi_B = x(B_{F_0}) \vee \cdots \vee x(B_{F_n})$$
on $B^\ell$.  Recall that $\phi_B$ is the convex hull of the vertices 
$x(B_{F_0}), \ldots, x(B_{F_n})$ on $B^\ell$.  Since there is homeomorphism 
between $B^\ell$ and $\bar{F}$, each simplex $\phi_B$ of $B^\ell$ determines the 
simplex $\phi$ of $\bar{F}$ having $x(F_0), \ldots, x(F_n)$ as its vertices.  
Even though $\phi$ may not be the convex hull of the vertices $x(F_0), \ldots, 
x(F_n)$,we will still let
	$$x(F_0) \vee \cdots \vee x(F_n)$$
denote the simplex $\phi$.  The resulting cell complex is homeomorphic to a 
concrete simplicial complex in $\R^\ell$.  

\bigskip

Note that we can form the barycentric subdivision of any regular cell complex 
over and over again.  Each subsequent subdivision creates more simplicial 
cells.

\begin{example}
Consider the regular cell complex $X$ indicated in Figure \ref{cell'}(a).  Its 
first barycentric subdivision, $X'$, is pictured in Figure \ref{cell'}(b).
\begin{figure}[h]
\begin{center}

\mbox{\subfigure[$X$]{\includegraphics[width=5cm]{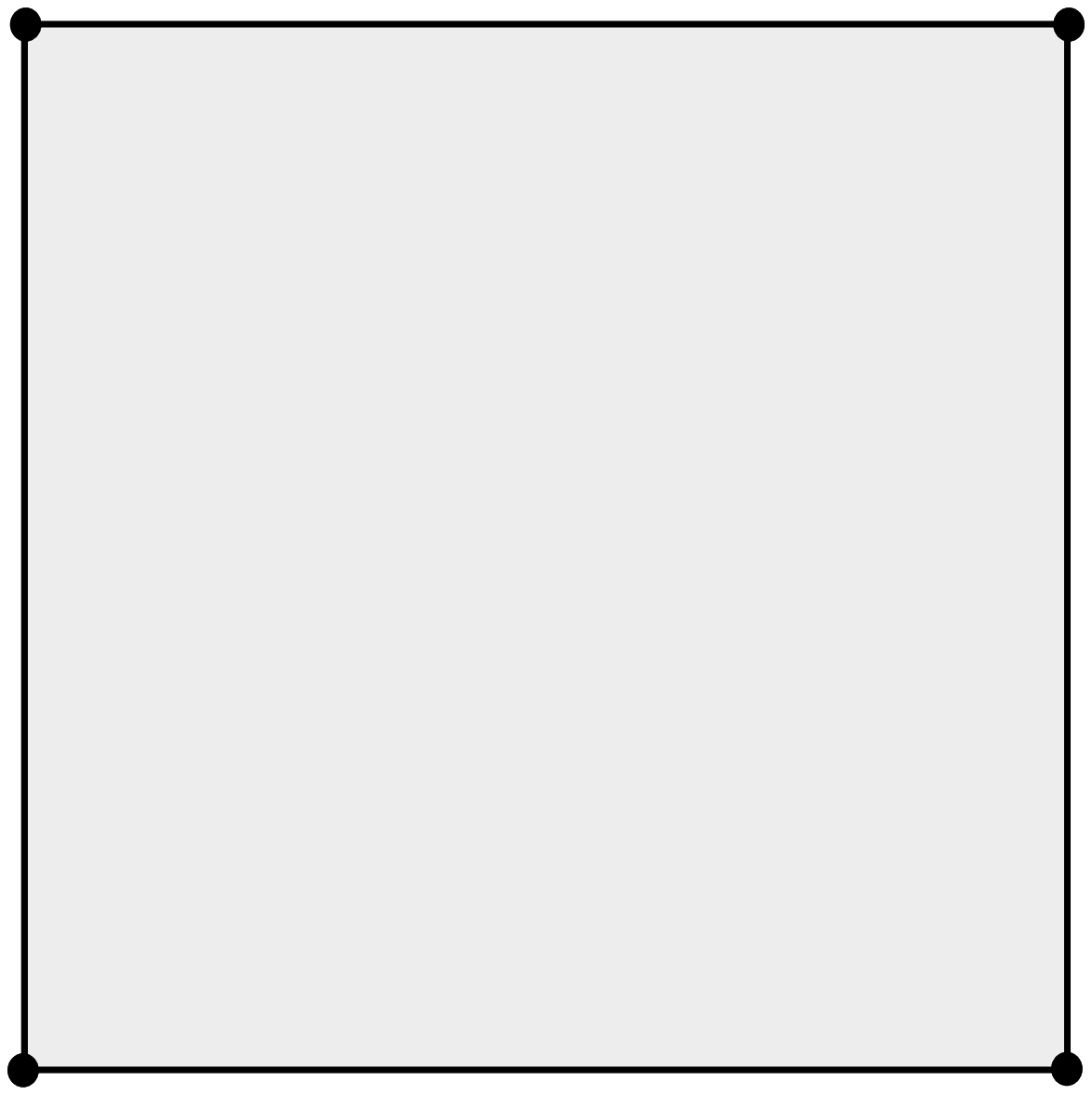}}\quad
\subfigure[$X'$]{\includegraphics[width=5cm]{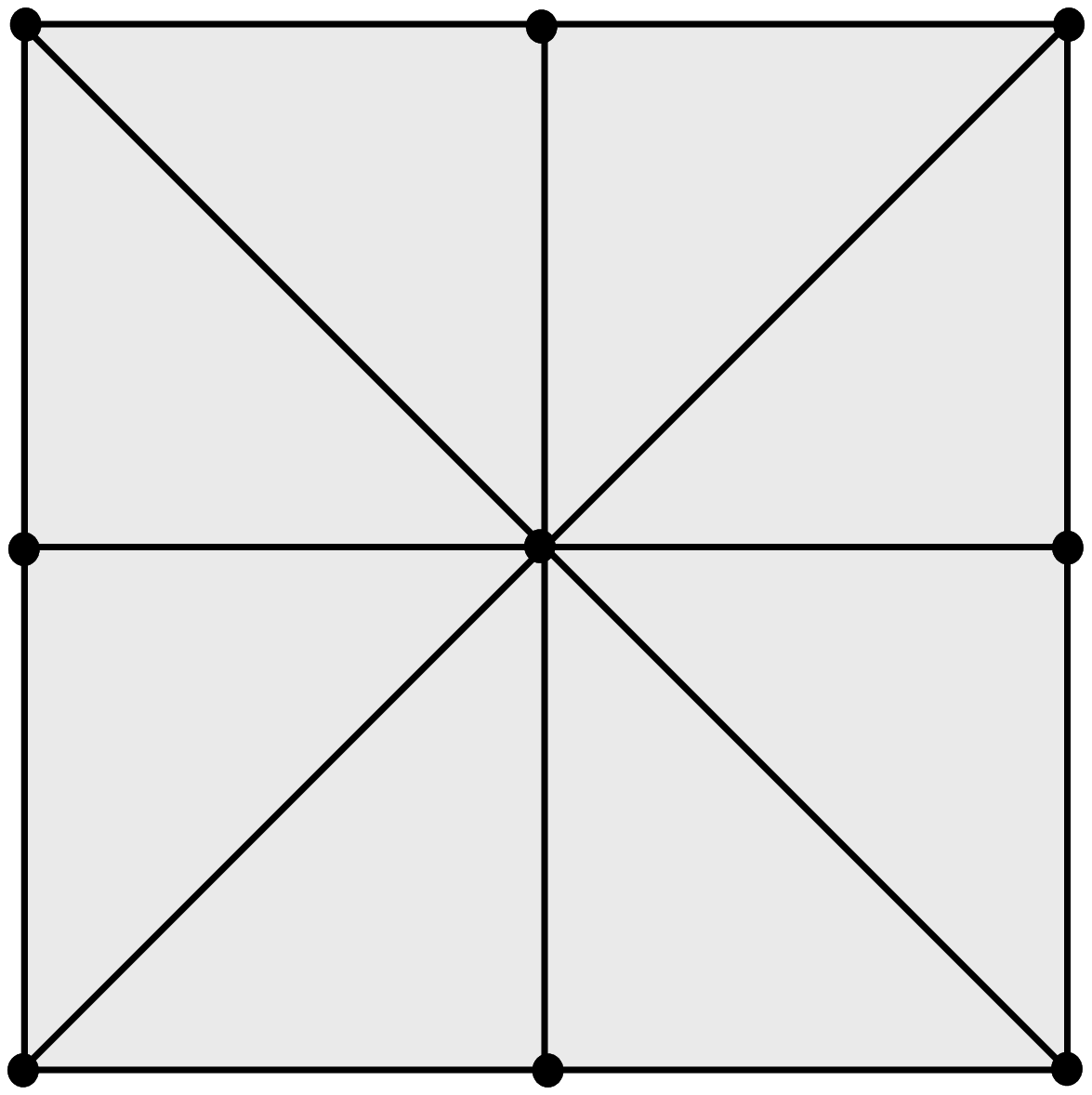}}}

\caption{Barycentric subdivisions of a regular cell complex}
\label{cell'}
\end{center}
\end{figure}
\end{example}

Barycentric subdivision of a regular cell complex $X$ leaves all of its 
topological properties intact.  Hence we have the following proposition.

\begin{proposition}\label{homeo}
If $X$ is a regular cell complex, then $X$ and $X'$ are homeomorphic as 
topological spaces. \hfill $\Box$
\end{proposition}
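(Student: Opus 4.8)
The plan is to induct on $n=\dim X$ and to build the homeomorphism $X'\to X$ one skeleton at a time, exploiting the fact that the barycentric subdivision of a closed cell is a cone over the barycentric subdivision of its boundary sphere. The base case $n=0$ is immediate, since then $X$ is a finite discrete set and $X'=X$. Throughout, recall from the earlier discussion that $X'$ is well defined up to homeomorphism, being a realization of the abstract simplicial complex $\Ord(\mathcal F(X))$.

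First I would record two structural facts about $X'=|\Ord(\mathcal F(X))|$. (i) Every chain $F_0<\cdots<F_k$ in $\mathcal F(X)$ consists of faces of $\bar{F}_k$, so each simplex of $X'$ lies in the subdivision $\bar{F}'$ of some closed cell; hence $X'=\bigcup_F \bar{F}'$, and the subdivision of the $k$-skeleton is exactly the subcomplex $(X^k)'=\bigcup_{\dim F\le k}\bar{F}'$. (ii) For a single $n$-cell $F$ with boundary sphere $\partial F=\bar{F}\smallsetminus F$, a chain of faces of $\bar{F}$ either lies entirely in $\partial F$ or is obtained by adjoining $F$ as a top element to such a chain (or is just the vertex $x(F)$); consequently $\bar{F}'$ is the simplicial cone $x(F)*(\partial F)'$ with apex the barycenter $x(F)$, which — since $(\partial F)'$ is compact — is homeomorphic to the topological cone on $(\partial F)'$, just as $\bar{F}\cong B^n$ is the topological cone on $\partial F\cong S^{n-1}$.

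For the inductive step I would assume the proposition for regular cell complexes of dimension $<n$ in the strengthened form that the homeomorphism carries each subdivided closed cell onto the corresponding closed cell. Applied to $X^{n-1}$ this yields a homeomorphism $h\colon (X^{n-1})'\to X^{n-1}$ whose restriction to each $(\partial F)'$, for $F$ an $n$-cell, is a homeomorphism onto $\partial F$. Now fix an $n$-cell $F$, choose a homeomorphism $g\colon\bar{F}\to B^n$, and — composing with a self-homeomorphism of $B^n$ fixing $S^{n-1}$ — arrange that $g(x(F))=0$, so that $B^n$ is the radial cone on $S^{n-1}$. The composite $g\circ h|_{(\partial F)'}\colon (\partial F)'\to S^{n-1}$ is a homeomorphism of the base spheres, and I would extend it radially over the cones, $(y,t)\mapsto(g\circ h(y),t)$ (the Alexander trick), then transport back by $g^{-1}$ to get a homeomorphism $h_F\colon\bar{F}'\to\bar{F}$ agreeing with $h$ on $(\partial F)'$ and preserving the cell-respecting clause. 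Since the various $h_F$ agree with $h$ on the shared boundaries $(\partial F)'$, they glue to a continuous bijection $X'=(X^n)'\to X^n=X$, and a continuous bijection of compact Hausdorff spaces is a homeomorphism, completing the induction.

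The main obstacle is really just the bookkeeping that makes the gluing legitimate: one must carry the cell-respecting clause through the induction so that $h$ restricts correctly to every $(\partial F)'$, and one must verify that radial extension over a cone is continuous and bijective — which reduces to the standard Alexander-trick argument together with the elementary identification of the geometric join $v*L$ (for $L$ a finite simplicial complex) with the topological cone on $|L|$. Once those two points are in hand, the rest is routine combinatorics of the poset $\mathcal F(X)$ and of how its chains distribute over the closed cells of $X$.
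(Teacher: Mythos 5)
Your argument is correct, and it actually supplies a proof that the paper does not: the paper states this proposition with a terminal $\Box$ and justifies it only with the informal remark that barycentric subdivision ``leaves all of its topological properties intact,'' together with the preceding construction of a concrete realization of $X'$ inside each closed cell. The implicit argument the paper gestures at is a \emph{subdivision} argument: for each cell $F$, the simplices $x(F_0)\vee\cdots\vee x(F_n)$ built from chains of faces of $\bar{F}$ tile $\bar{F}$, so the realization of $X'$ can be built so that, as a subset of $X$, it coincides with $X$ itself, making the homeomorphism the identity map viewed as a simplicial complex with finer cells. Your proof takes a different and cleaner route: rather than arranging a concrete realization in place, you construct an abstract homeomorphism $X'\to X$ by induction on dimension, using the key observations that $(X^k)'=\bigcup_{\dim F\le k}\bar{F}'$, that $\bar{F}'$ is the simplicial cone $x(F)*(\partial F)'$, and that a homeomorphism of bases extends over topological cones by the radial (Alexander-trick) construction, with the strengthened inductive hypothesis (cell-respecting) making the pieces glue. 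Both approaches are standard and both are valid; yours has the advantage of being self-contained and not depending on the particular geometry of the realization, at the modest cost of having to carry the cell-respecting clause through the induction. One small caveat worth making explicit if you write this out in full: when you invoke the inductive hypothesis for $(X^{n-1})'$, you are implicitly using the compatibility $(\partial F)' = \Ord(\mathcal{F}(\partial F)) \subseteq \Ord(\mathcal{F}(X^{n-1}))$ as a subcomplex, which holds because $\partial F$ is a union of closed cells of $X^{n-1}$ and chains in its face poset are chains in $\mathcal{F}(X^{n-1})$; this is exactly what lets the restriction of $h$ to $(\partial F)'$ be a homeomorphism onto $\partial F$.
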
  

We will now define a special example of an abstract simplicial complex which we 
will make use of in later chapters.

\begin{definition}
Let $\mathcal{U}$ be a collection of subsets of $\R^\ell$.  We define an 
abstract simplicial complex called the \emph{nerve} of $\mathcal{U}$, denoted 
$N(\mathcal{U})$.  Its vertices are the elements of $\mathcal{U}$ and its 
simplices are the finite subcollections $\{U_0, \ldots, U_n\}$ such that
	$$U_0 \cap \cdots \cap U_n \neq \emptyset.$$
\end{definition}

Note that $N(\mathcal{U})$ is an abstract simplicial complex, since if
	$$U_0 \cap \cdots \cap U_n \neq \emptyset\text{,}$$
then any subcollection of $\{U_0, \ldots, U_n\}$ also has nonempty intersection.

\bigskip

The following theorem about the nerve of open covers is called the Nerve 
Theorem.  We will not make use of this theorem until Chapter \ref{chap5}.  For a proof, 
see \cite{BO}.

\begin{theorem}\label{nerve}
Let $\mathcal{U}$ be an open cover of contractible sets of a subset $A$ of 
$\R^\ell$ (or $\C^\ell$).  Then $|N(\mathcal{U})|$ has the same homotopy type as 
$A$. \hfill $\Box$
\end{theorem}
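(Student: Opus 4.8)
The plan is to prove the statement by the standard ``blow-up'' argument: build an auxiliary space mapping onto both $A$ and $|N(\mathcal U)|$, and show each projection is a homotopy equivalence. One caveat to flag at the outset: contractibility of the individual members of $\mathcal U$ is not by itself enough, so I would read the hypothesis as the usual \emph{good cover} condition, namely that every nonempty finite intersection $U_{i_0}\cap\cdots\cap U_{i_k}$ of members of $\mathcal U$ is contractible (this is what is actually used, and what \cite{BO} intends). Since a subset of $\R^\ell$ or $\C^\ell$ is metrizable, hence paracompact, all open covers are numerable and admit subordinate partitions of unity; I would use this freely.

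Write a point of $|N(\mathcal U)|$ as a formal convex combination $\xi=\sum_{U\in\mathcal U}\lambda_U(\xi)[U]$ with $\lambda_U(\xi)\ge 0$, $\sum_U\lambda_U(\xi)=1$, whose \emph{carrier} $\mathrm{carr}(\xi)=\{U:\lambda_U(\xi)>0\}$ is a simplex of $N(\mathcal U)$, and set
$$Z=\{(x,\xi)\in A\times|N(\mathcal U)| : x\in U \text{ for every } U\in\mathrm{carr}(\xi)\},$$
with projections $\pi_A\colon Z\to A$ and $\pi_N\colon Z\to|N(\mathcal U)|$. I would first show $\pi_A$ is a homotopy equivalence. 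Choosing a partition of unity $\{\rho_U\}_{U\in\mathcal U}$ subordinate to $\mathcal U$, the map $s(x)=\bigl(x,\sum_U\rho_U(x)[U]\bigr)$ is a section of $\pi_A$: if $\rho_U(x)>0$ then $x\in U$, so $\{U:\rho_U(x)>0\}$ has $x$ in its intersection and is a simplex of $N(\mathcal U)$, whence $s(x)\in Z$. The straight-line homotopy $\bigl((x,\xi),t\bigr)\mapsto\bigl(x,(1-t)\xi+t\sum_U\rho_U(x)[U]\bigr)$ stays in $Z$, since the carrier of the combination lies in $\mathrm{carr}(\xi)\cup\{U:\rho_U(x)>0\}$, a collection of members of $\mathcal U$ all containing $x$, hence a simplex of $N(\mathcal U)$. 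Thus $s\circ\pi_A\simeq\mathrm{id}_Z$ and $\pi_A\circ s=\mathrm{id}_A$, so $\pi_A$ is a homotopy equivalence. (This step uses only that $\mathcal U$ is an open cover.)

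Next I would show $\pi_N$ is a homotopy equivalence, which is where the good-cover hypothesis enters. Cover $|N(\mathcal U)|$ by the open stars $O_U$ of its vertices $[U]$, a numerable open cover; for a finite family $U_{i_0},\dots,U_{i_k}$ the intersection $O_{i_0}\cap\cdots\cap O_{i_k}$ is nonempty exactly when $\sigma=\{U_{i_0},\dots,U_{i_k}\}$ is a simplex of $N(\mathcal U)$, i.e.\ when $U_\sigma:=U_{i_0}\cap\cdots\cap U_{i_k}\ne\emptyset$. In that case $\pi_N^{-1}(O_{i_0}\cap\cdots\cap O_{i_k})$ deformation retracts onto the fibre $\pi_N^{-1}(b_\sigma)\cong U_\sigma$ over the barycentre $b_\sigma$ of $\sigma$, via $\bigl((x,\xi),t\bigr)\mapsto\bigl(x,(1-t)\xi+tb_\sigma\bigr)$: this stays in $Z$ because each vertex of $\mathrm{carr}(\xi)$ contains $x$, and each $U_{i_j}$ lies in $\mathrm{carr}(\xi)$ (as $\xi\in O_{i_j}$) and so also contains $x$. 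Hence that preimage is homotopy equivalent to $U_\sigma$, contractible by hypothesis; since $O_{i_0}\cap\cdots\cap O_{i_k}$ is itself contractible (straight-line retraction to $b_\sigma$), the restriction of $\pi_N$ there is a map between contractible spaces, hence a homotopy equivalence. By the local-to-global principle for maps over a numerable cover (Dold's theorem, or a Mayer--Vietoris induction over finite subcovers), $\pi_N$ is a homotopy equivalence. Combining the two steps, $A\simeq Z\simeq|N(\mathcal U)|$.

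I expect the main obstacle to be the bookkeeping in the third step: checking that the straight-line homotopies never leave $Z$ (the carrier computations above), and correctly invoking the local-to-global principle — one really needs the restriction of $\pi_N$ to be an equivalence over \emph{every finite intersection} of the open stars, not just over single stars, which is exactly why the good-cover hypothesis cannot be dropped. The secondary point worth making explicit is this strengthening of ``cover by contractible sets'' to ``good cover.''
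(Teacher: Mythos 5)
The paper states the Nerve Theorem with a terminal~$\Box$ and refers the reader to Borsuk \cite{BO}; there is no in-text proof to compare yours against. On its own terms, your argument is the standard modern ``Mayer--Vietoris blow-up'' proof and the outline is correct: $Z$ is the blow-up, a partition of unity collapses $Z$ onto $A$ (using only that $\mathcal{U}$ is an open cover and the space is paracompact), the open-star cover together with the good-cover hypothesis collapses $Z$ onto $|N(\mathcal{U})|$, and a local-to-global gluing principle finishes. The carrier computations that keep the straight-line homotopies inside $Z$ are handled correctly in both halves, and you correctly identify the exact point at which the good-cover hypothesis is consumed (contractibility of the fibre $\pi_N^{-1}(b_\sigma)\cong U_\sigma$).

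Your caveat on the hypothesis is both correct and worth pressing: ``open cover by contractible sets'' is strictly weaker than the good-cover condition, and the theorem is false under the weaker hypothesis. Cover $S^1\subset\R^2$ by two open arcs $U_1,U_2$, each slightly longer than a semicircle; each $U_i$ is contractible, $U_1\cap U_2$ has two components, so the nerve is a single edge and $|N(\mathcal{U})|$ is contractible while $S^1$ is not. The statement in the paper is thus imprecise as written, though harmlessly so in context: in its one application (Chapter~\ref{chap5}) the cover $\mathcal{U}=\{U(\omega)\}$ consists of convex open sets, every nonempty intersection is convex and hence contractible, and the paper explicitly verifies this before invoking the theorem even though the stated hypothesis does not demand it. The only place I would press you to sharpen before calling this a complete proof is the closing appeal to ``Dold's theorem, or a Mayer--Vietoris induction over finite subcovers'': what you actually need is the statement that a map restricting to a homotopy equivalence over every nonempty finite intersection of a numerable open cover of its target is itself a homotopy equivalence, and that lemma (which is not the same as Dold's fibration theorem) should be cited or proved precisely, since it is carrying the real weight of the second half.
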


For a closer look at this development of regular cell complexes and 
simplicial complexes see \cite{MU}.

\end{paragraph}

\end{chapter}


\begin{chapter}{Hyperplane Arrangements}\label{chap2}

By a \emph{hyperplane} $H$ in $\R^\ell$ we mean a linear subspace $H \subseteq 
\R^\ell$ of codimension 1.  That is, we are assuming a hyperplane $H$ contains 
the origin $\{0\}$.  Each hyperplane $H$ is given by a linear function 
$\alpha(x)$, so that 
	$$H = \{x \in \R^\ell:  \alpha(x) = 0\}.$$  
The choice of $\alpha$ determines a ``positive" and a ``negative" side of $H$.  
The positive side of $H$ is given by 
	$$H^+ = \{x \in \R^\ell: \alpha(x) > 0\}$$
and the negative side of $H$ is given by 
	$$H^- = \{x \in \R^\ell: \alpha(x) < 0\}.$$  
It is clear that an alternative choice of a linear function $\alpha$ for $H$ is 
simply the choice of a positive side, up to positive scalar multiple.  When the 
positive side of a hyperplane $H$ has been specified, we call $H$ an 
\emph{oriented hyperplane}.  

\begin{definition}
A \emph{real hyperplane arrangement} $\A$ in $\R^\ell$ is a finite collection of 
hyperplanes $\{H_1, \ldots, H_n\}$ in $\R^\ell$.  Often, we will refer to a real 
hyperplane arrangement simply as a hyperplane arrangement, unless the context is 
unclear.  
\end{definition}

If the positive side of each hyperplane $H_i$ in $\A$ has been chosen, then we 
call $\A$ an \emph{oriented hyperplane arrangement}.  We say that $\A$ is 
\emph{essential} if 
	$$\bigcap_{H \in \A} H = \{0\}.$$

\begin{example}
Figure \ref{arrange} depicts an essential hyperplane arrangement $\A$ consisting 
of three hyperplanes $H_1$, $H_2$, and $H_3$ in $\R^2$.  The arrows on each 
hyperplane indicate the positive side.
\begin{figure}[h]
\begin{center}

\includegraphics[width=6cm]{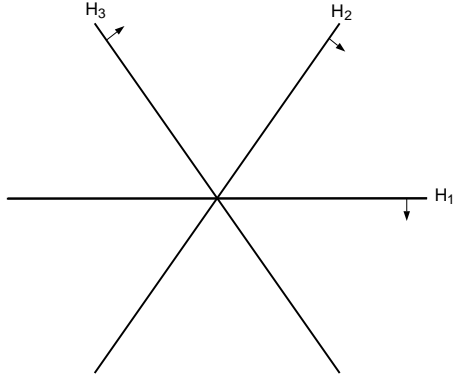}

\caption{An essential hyperplane arrangement}
\label{arrange}
\end{center}
\end{figure}
\end{example} 

The hyperplanes of $\A$ subdivide $\R^\ell$ into \emph{faces}.  We denote the 
set of all faces by $\mathcal{F}(\A)$, which is partially ordered 
via
	$$F' \leq F \ \text{iff} \ F' \subseteq \bar{F},$$
for all $F',F \in \mathcal{F}(\A)$.  We will refer to $\mathcal{F}(\A)$ 
as the face poset for $\A$.  A face $C \in \mathcal{F}(\A)$ is called a 
\emph{chamber} if $C$ is a codimension $0$ face of $\A$.  We denote the set of 
all chambers of $\A$ by $\mathcal{C}(\A)$.  Note that $\mathcal{C}(\A)$ 
consists exactly of the maximal elements of the face poset.  Given a chamber $C$ 
of $\A$, its \emph{walls} are defined to be the hyperplanes $H_i$ of $\A$ such 
that $\bar{C} \cap H_i \neq \emptyset$.  Two chambers $C$ and $D$ are called 
\emph{adjacent} if they have a common wall.

\begin{example}
Let $\A$ be the hyperplane arrangement of Figure \ref{Awfaces1}.  Then 			
	$$\mathcal{C}(\A) = \{C_0, C_1, C_2, C_3, C_4, C_5\}.$$
The walls of $C_0$ are the hyperplanes $H_1$ and $H_2$.  The face poset for $\A$ 
is given in Figure \ref{Awfaces2}.
\begin{figure}[h]
\begin{center}

\includegraphics[width=6cm]{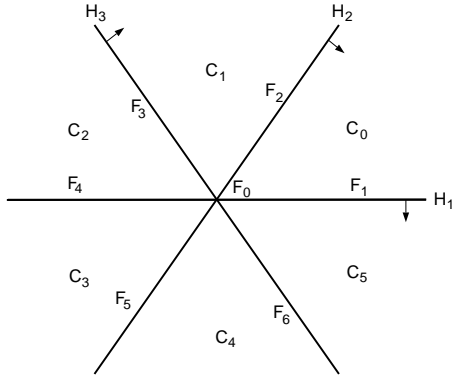}

\caption{Faces of a hyperplane arrangement}
\label{Awfaces1}
\end{center}
\end{figure}

\begin{figure}[h]
\begin{center}

\includegraphics[width=7cm]{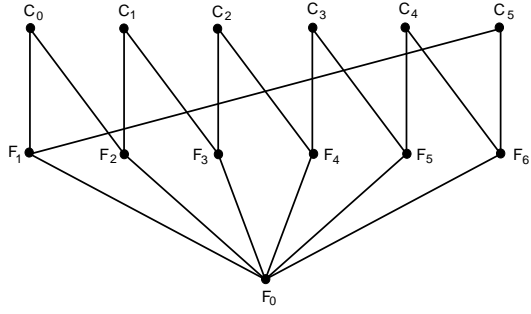}

\caption{Face poset for a hyperplane arrangement}
\label{Awfaces2}
\end{center}
\end{figure}
\end{example}

A \emph{subarrangement} of $\A$ is any subcollection of hyperplanes of $\A$.  
For example, the walls of any chamber $C$ of $\A$ form a subarrangement.  Also, 
for all $F \in \mathcal{F}(\A)$, we can form the subarrangement 
	$$\A_F = \{H \in \A: F \subseteq H\}.$$
Note that $\A_F = \A_{F'}$ iff $|F| = |F'|$, where $|F|$ and $|F'|$ are the 
linear subspaces spanned by $F$ and $F'$, respectively.  Also, note that if $C 
\in \mathcal{C}(\A)$, then $\A_C = \emptyset$.

\bigskip

If $C \in \mathcal{C}(\A)$ and $F \in \mathcal{F}(\A)$, then we denote by $C_F$ 
the unique chamber of $\A_F$ containing $C$.  Note that, since $\A_C = 
\emptyset$ for $C \in \mathcal{C}(\A)$, ${C'}_C = \R^\ell$, for any two chambers 
$C'$ and $C$ of $\A$.

\begin{example}
Figure \ref{subarrange}(b) depicts the subarrangement $\A_F$, where $\A$ is the 
hyperplane arrangement of Figure \ref{subarrange}(a).  Figure 
\ref{subarrange}(b) also shows the unique chamber $C_F$.
\begin{figure}[h]
\begin{center}

\mbox{\subfigure[$\A$]{\includegraphics[width=6cm]{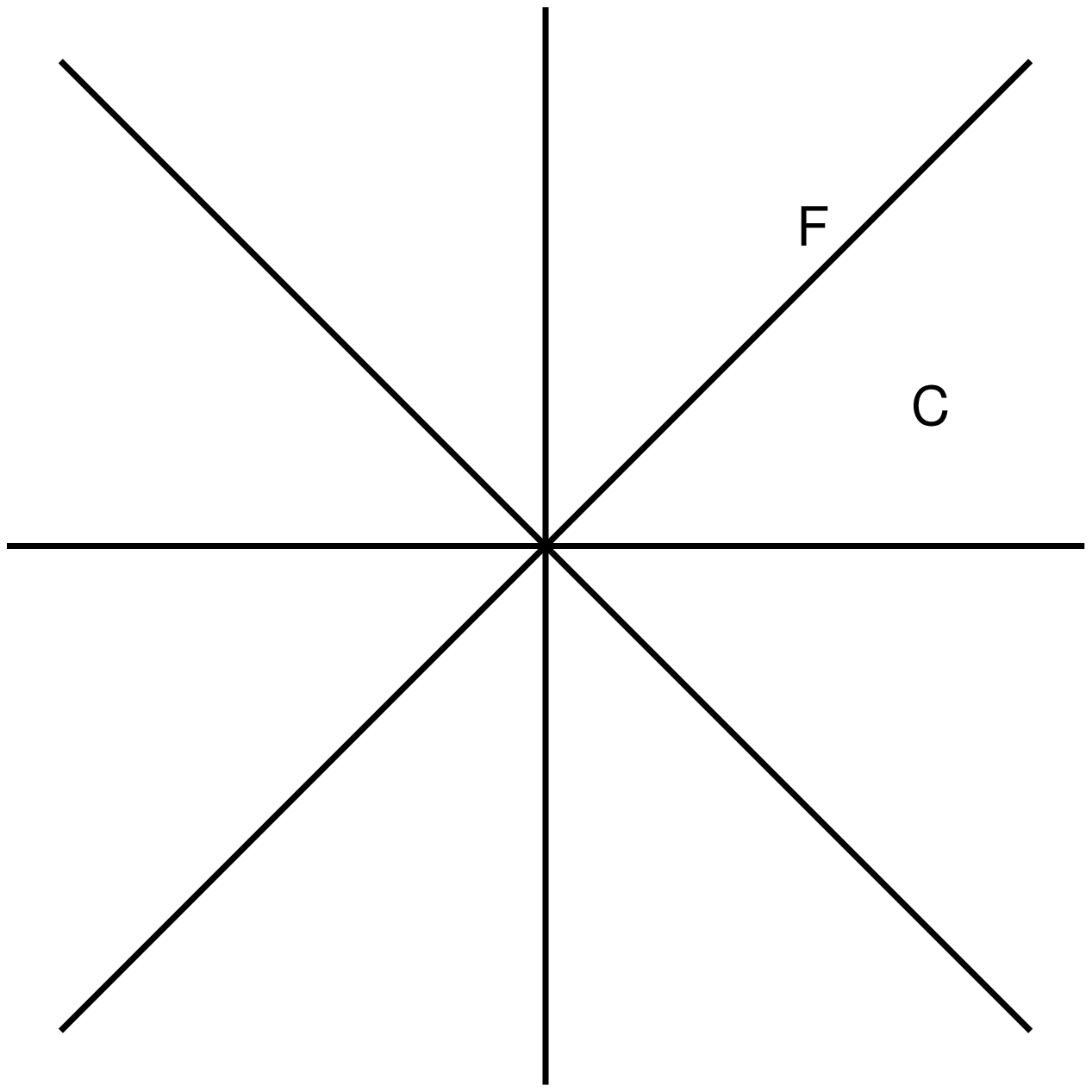}}\quad
\subfigure[$\A_F$]{\includegraphics[width=5.75cm]{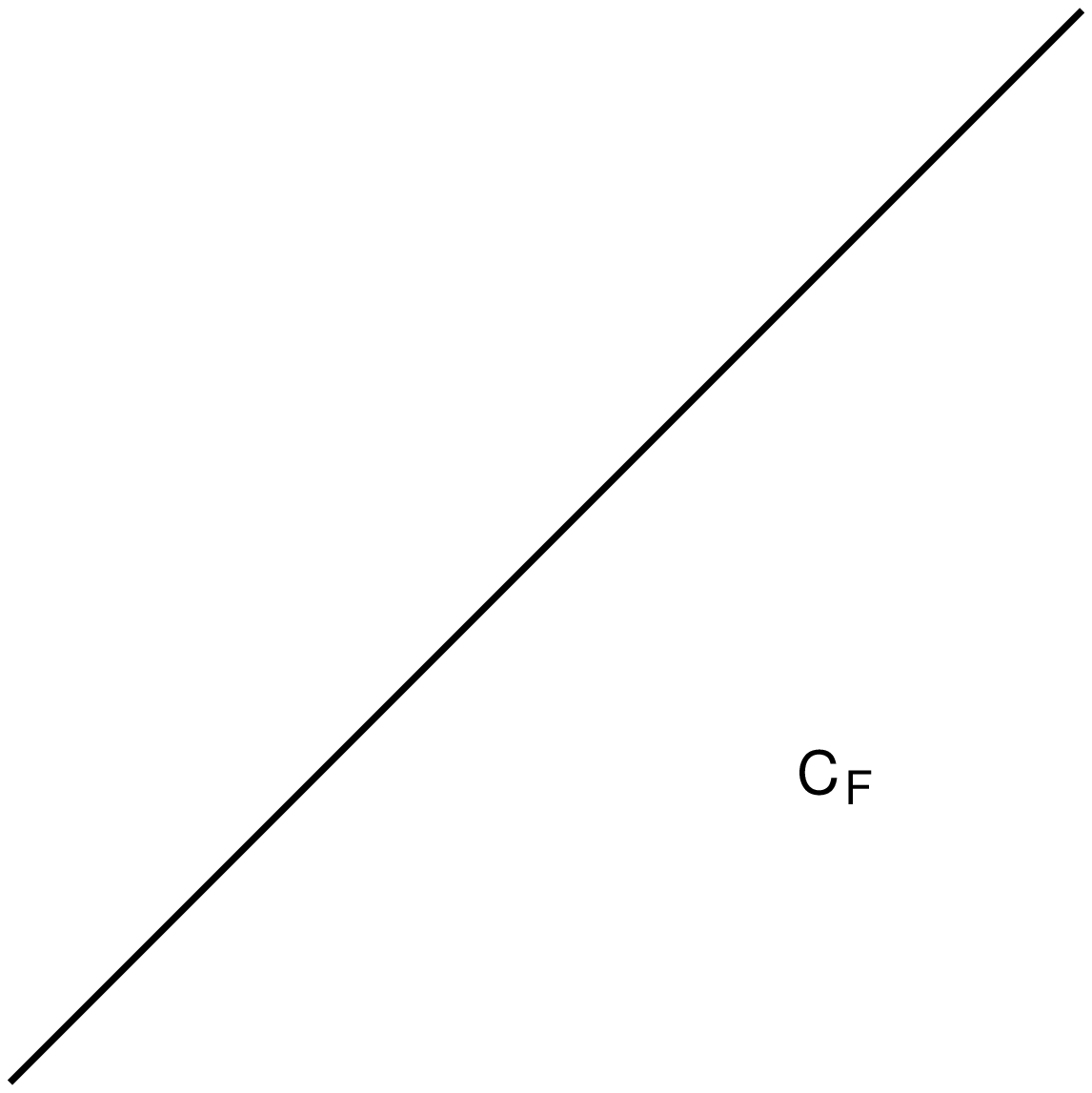}}}

\caption{A subarrangement}
\label{subarrange}
\end{center}
\end{figure}
\end{example}

If $\A$ is a real hyperplane arrangement, then 
	$$\A_\C = \{H+iH: H \in \A\} \subseteq \C^\ell$$
is called the \emph{complexification of} $\A$.  The defining equations for each 
complex hyperplane $H+iH$ are the same as for the real hyperplane $H$, except 
now the independent variables are complex numbers.  Note that a real arrangement 
$\A$ is essential iff its complexification is essential.  Also note that $x+iy 
\in H+iH$ iff $x \in H$ and $y \in H$.  

\bigskip

If $\A$ is a real hyperplane arrangement, then we will denote the 
\emph{complement of the complexification of} $\A$ via 
	$$M(\A) = \C^\ell - \bigcup_{H \in \A} (H+iH).$$
That is, $M(\A)$ consists of all points $x+iy \in \C^\ell$ where both $x$ and 
$y$ are not contained in the same hyperplane $H$.

\bigskip

Every essential arrangement $\A$ in $\R^\ell$ determines a (regular) cellular 
decomposition of  $S^{\ell-1}$, denoted $S_\A$.  A face $F \neq \{0\}$ of 
$\A$ corresponds to the open cell $\Delta(F) = F \cap S^{\ell-1}$, and every 
open cell of this decomposition of $S^{\ell-1}$ has that form.

\bigskip

This cellular decomposition of $S^{\ell-1}$ determines a simplicial 
decomposition of $S^{\ell-1}$, i.e., the barycentric subdivision.  For 
every $F \neq \{0\}$ of $\A$ we fix a point $x(F) \in \Delta (F)$.  A chain 
	$$\{0\} \neq F_0 < \cdots < F_n$$
of faces of $\A$ determines an open simplex 
	$$\phi = x(F_0) \vee \cdots \vee x(F_n)$$
having $x(F_0), \ldots, x(F_n)$ as vertices, and every simplex of this 
simplicial decomposition of $S^{\ell-1}$ has that form.  Let $S_\A'$ denote the 
simplicial decompostion of $S^{\ell-1}$ determined by $\A$. 

\bigskip

The simplicial decomposition of $S^{\ell-1}$ determinies a simplicial 
decomposition of $B^\ell$.  We add the vertex $x(\{0\}) = 0$ to the set of 
vertices for $S^{\ell-1}$.  A chain 
	$$F_0 < \cdots < F_n$$
of faces of $\A$ determines an open simplex 
	$$\phi = x(F_0) \vee \cdots \vee x(F_n)$$
having $x(F_0), \ldots, x(F_n)$ as vertices, and every simplex of this 
simplicial decomposition of $B^\ell$ has that form.  Let $B_\A'$ denote the 
simplicial decomposition of $B^\ell$ determined by $\A$.  

\begin{example}
Let $\A$ be the hyperplane arrangement of Figure \ref{arrange}.  Then the 
simplicial decomposition, $S_\A'$, is shown in Figure \ref{decomp}(a).  By 
adding 
the vertex $x(\{0\})$ to $S_\A'$, we get the simplicial decomposition, $B_\A'$, 
shown in Figure \ref{decomp}(b).
\begin{figure}[h]
\begin{center}

\mbox{\subfigure[$S_\A'$]{\includegraphics[width=5cm]{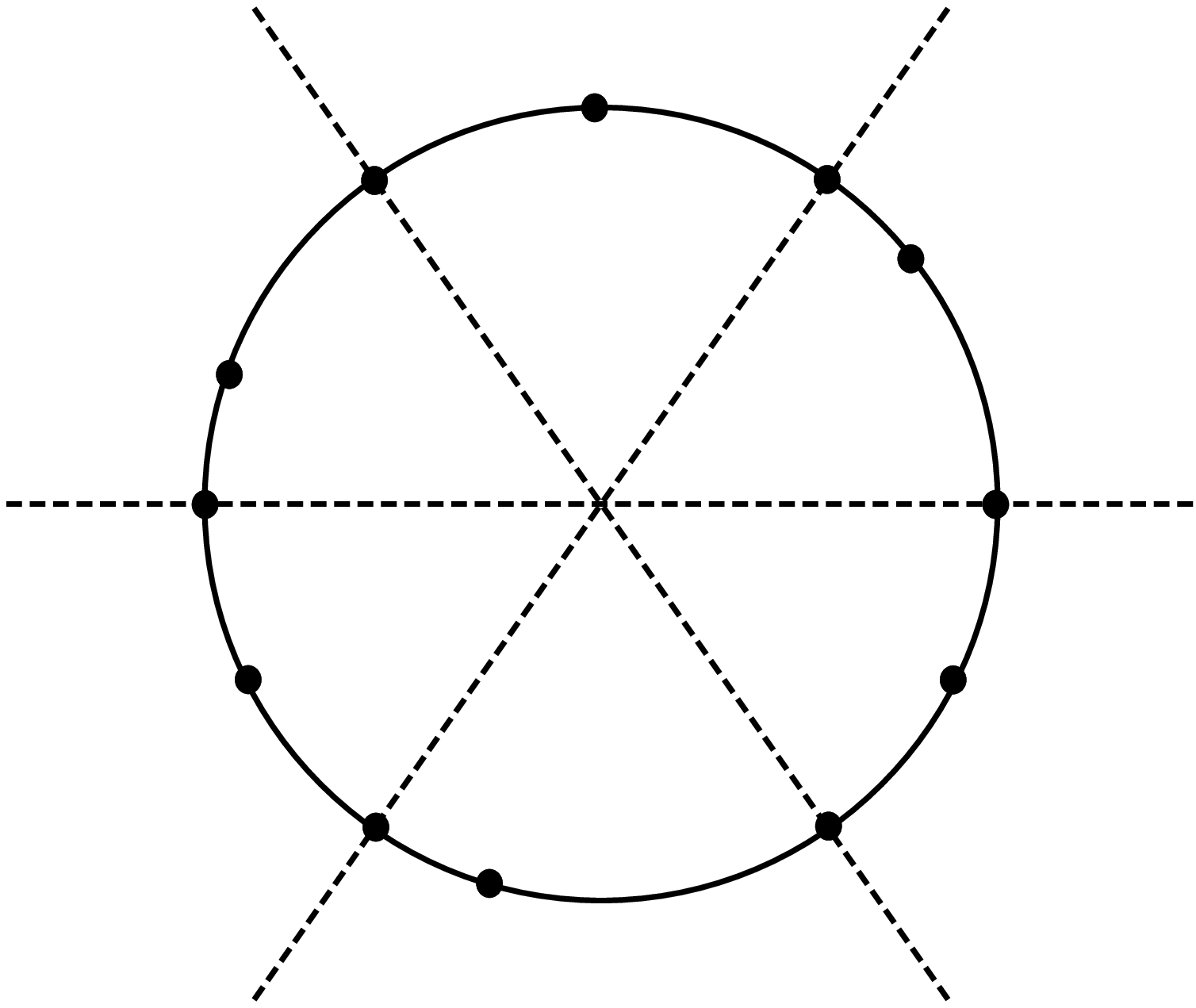}}\quad
\subfigure[$B_\A'$]{\includegraphics[width=5cm]{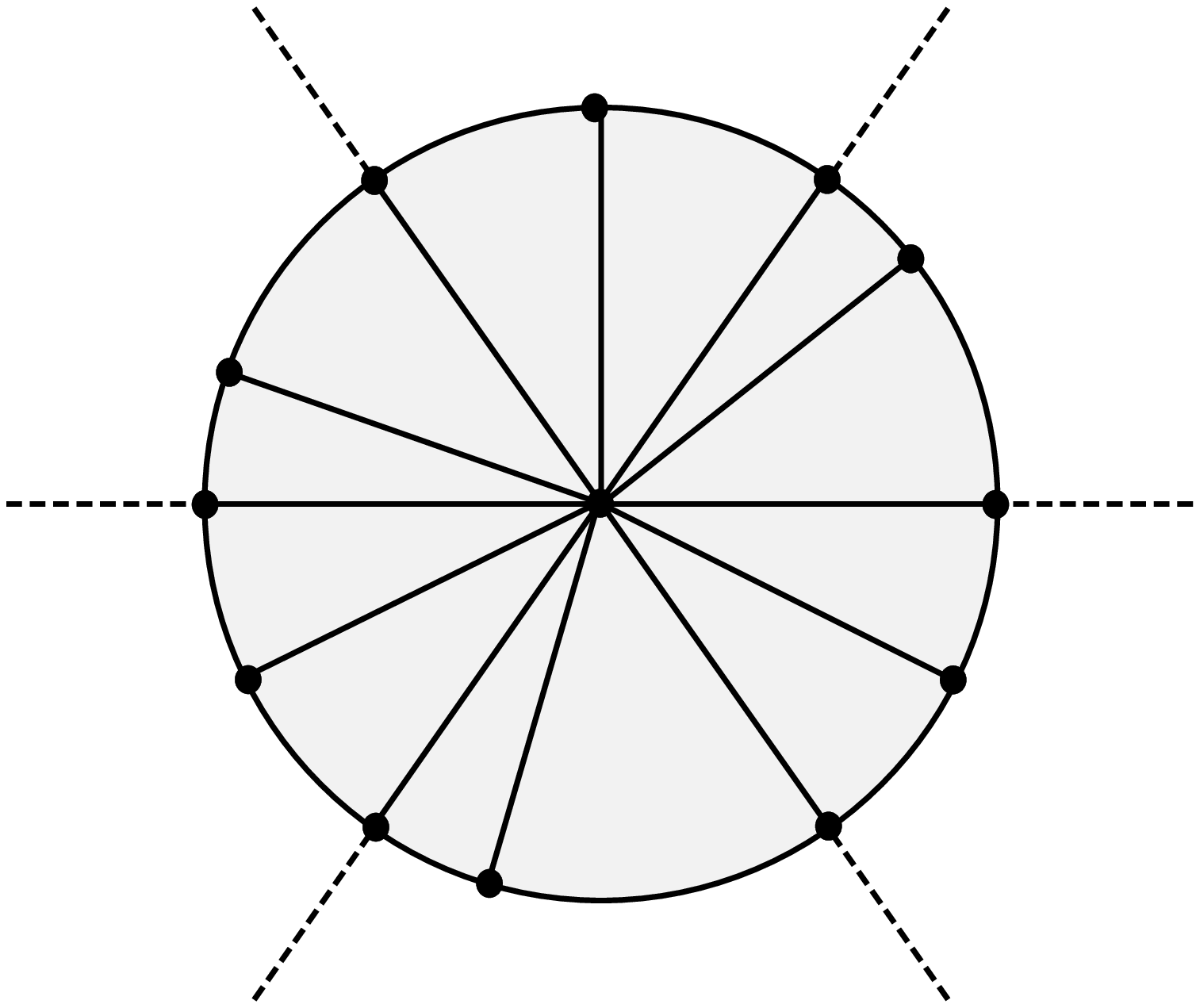}}}

\caption{Simplicial decomposition of $S^1$ and $B^2$}
\label{decomp}
\end{center}
\end{figure} 
\end{example}

Note that, if $F_0 \neq \{0\}$, then 
	$$\phi = x(F_0) \vee \cdots \vee x(F_n) \subseteq S^{\ell-1}.$$  
If $\phi$ is an open cell of $S^{\ell-1}$, then the \emph{cone of} $\phi$ is 
	$$K(\phi) = \{\lambda x: x \in \phi \ \text{and} \ \lambda > 0\}.$$  

\begin{lemma}\label{word}
If 
	$$\phi = x(F_0) \vee \cdots \vee x(F_n)$$
is an open simplex of $S^{\ell-1}$ with 
	$$\{0\} \neq F_0 < \cdots < F_n\text{,}$$
then $K(\phi) \subseteq F_n$.
\end{lemma}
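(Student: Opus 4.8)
The plan is to unwind the definitions and argue that the cone over a simplex built from a chain of faces, all of which lie in a single face $F_n$, must itself lie in $F_n$.

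First I would recall that a face $F$ of $\A$ is determined by a sign vector: for each hyperplane $H_i$ with defining form $\alpha_i$, the sign of $\alpha_i$ is constant (one of $+$, $-$, or $0$) on $F$. The crucial observation is that $F$ is a convex cone — it is the intersection of the open halfspaces $H_i^{+}$ or $H_i^{-}$ (for those $i$ where the sign is nonzero) with the hyperplanes $H_i$ (for those $i$ where the sign is zero), and each such piece is convex and closed under positive scaling. In particular $F$ is closed under taking positive linear combinations of its points.

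Next I would use the partial order: since $\{0\} \neq F_0 < F_1 < \cdots < F_n$, we have $F_k \subseteq \bar{F_n}$ for every $k$, hence each chosen barycenter $x(F_k) \in \Delta(F_k) \subseteq F_k \subseteq \bar{F_n}$. Actually I want the stronger statement that each $x(F_k)$ lies in $F_n$ itself, not just its closure; but the points $x(F_k)$ are the vertices of the \emph{open} simplex $\phi$, and a point in $\phi$ is a strictly positive convex combination $\sum \lambda_k x(F_k)$ with all $\lambda_k > 0$. So I would argue directly: for each hyperplane $H_i$, if $\alpha_i$ is identically zero on $F_n$ then (since $F_k \subseteq \bar{F_n}$) it is zero on every $x(F_k)$, hence zero on any combination; if $\alpha_i$ is (say) strictly positive on $F_n$, then $\alpha_i(x(F_k)) \geq 0$ for all $k$ (as $x(F_k) \in \bar{F_n}$) and $\alpha_i(x(F_n)) > 0$, so any combination $\sum \lambda_k x(F_k)$ with $\lambda_k > 0$ satisfies $\alpha_i\big(\sum \lambda_k x(F_k)\big) = \sum \lambda_k \alpha_i(x(F_k)) > 0$. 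The symmetric argument handles the negative case. This shows every point of $\phi$ has exactly the sign vector of $F_n$, i.e. $\phi \subseteq F_n$. Finally, since $F_n$ is a cone (closed under positive scaling), $K(\phi) = \{\lambda x : x \in \phi,\ \lambda > 0\} \subseteq F_n$.

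The main obstacle, such as it is, is bookkeeping the sign-vector characterization of faces carefully and making sure the open/closed distinction is handled correctly — specifically, that the barycenters of the smaller faces $F_k$ lie in $\bar{F_n}$ rather than $F_n$, and that it is precisely the strict positivity of the simplex coefficients $\lambda_k$ (together with $x(F_n) \in F_n$ contributing the strict sign) that upgrades "in the closure" to "in the face." No genuinely hard estimate is involved; the content is entirely that faces are convex cones and that a relatively-interior point of a simplex is a strictly positive combination of its vertices.
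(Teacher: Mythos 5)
Your proof is correct and follows essentially the same route as the paper's: show $\phi \subseteq F_n$, then use that $F_n$ is a cone (the paper calls it a ``sector'') to get $K(\phi) \subseteq F_n$. In fact you are slightly more careful than the paper at the one delicate step — the paper says the open convex hull of the $x(F_k)$ lies in $F_n$ ``since $F_n$ is convex,'' which by itself is not enough because the $x(F_k)$ for $k<n$ only lie in $\bar{F_n}$; your sign-vector argument, using strict positivity of the simplex coefficients together with $x(F_n)\in F_n$, is exactly what justifies that step.
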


\begin{proof}
Let $y \in K(\phi)$.  Then $y=\lambda_0 x$ for some $x \in \phi$ and some 
$\lambda_0 > 0$.  Since $F_0 \leq \cdots \leq F_n$, $x(F_0), \ldots, x(F_n) \in 
\bar{F_n}$. Then the open convex hull of $x(F_0), \ldots, x(F_n)$ is contained 
in $F_n$, since $F_n$ is convex.  Hence $\phi \subseteq F_n$.  So, $x \in F_n$.  
But $\lambda x \in F_n$ for every $\lambda > 0$, since $F_n$ is a sector.  Hence 
$y \in F_n$.  Therefore, $K(\phi) \subseteq F_n$.  
\end{proof}

Note that the family 
	$$\{K(\phi): \phi \ \text{a simplex of} \ S^{\ell-1}\}$$
is a partition of $\R^\ell-\{0\}$. 
 
\end{chapter}


\begin{chapter}{Arrangements with Group Actions}\label{chap3}

There are many special types of hyperplane arrangements.  In this chapter, we 
will focus our discussion on arrangements which support the action of finite 
reflection groups.  We will begin by introducing some necessary definitions and 
terminology.

\bigskip

Let $H$ be a hyperplane in $\R^\ell$.  The \emph{reflection} with respect to $H$ 
is the linear transformation 
	$$s_H: \R^\ell \to \R^\ell$$
which is the identity on $H$ and is multiplication by $-1$ on the orthogonal 
complement $H^\bot$ of $H$.

\begin{definition}
A \emph{finite reflection group} is a finite group $W$ of orthogonal linear 
transformations of $\R^\ell$ generated by reflections.
\end{definition}

If $W$ is a finite reflection group, then the corresponding hyperplane 
arrangement $\A_W$ consists of all the hyperplanes $H$ where $s_H$ is a 
reflection in $W$.  We will usually suppress the subscript $W$ when referring to 
the hyperplane arrangement corresponding to the finite reflection group $W$.  In 
this case, we will call $\A$ a \emph{reflection arrangement}.

\bigskip

For a complete proof of the following theorem, see \cite{BR} or \cite{HU}.

\begin{theorem}
If $\A$ is a reflection arrangement with reflection group $W$, then $W$ permutes 
the hyperplanes of $\A$.
\end{theorem}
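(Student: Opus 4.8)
The plan is to show that for any reflection $s_H \in W$ and any reflection $s_K \in W$, the image $s_K(H)$ is again one of the hyperplanes of $\A$; since $\A$ consists of exactly the fixed hyperplanes of reflections in $W$, it suffices to exhibit a reflection in $W$ whose fixed hyperplane is $s_K(H)$. First I would recall the standard conjugation identity: if $t \in W$ is any orthogonal transformation and $s_H$ is the reflection fixing $H$, then $t \, s_H \, t^{-1}$ is again a reflection, and its fixed hyperplane is precisely $t(H)$. This is because $t s_H t^{-1}$ is orthogonal, has order $2$, acts as the identity on $t(H)$ (for $v \in H$, $t s_H t^{-1}(t(v)) = t s_H(v) = t(v)$), and acts as $-1$ on $t(H)^{\perp} = t(H^{\perp})$ (using that $t$ is orthogonal, so it carries orthogonal complements to orthogonal complements). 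Hence $t s_H t^{-1} = s_{t(H)}$.

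Now the argument runs as follows. Let $H \in \A$, so $s_H \in W$ by definition of $\A = \A_W$. Let $t \in W$ be arbitrary. Since $W$ is a group, $t s_H t^{-1} \in W$, and by the identity above this element equals $s_{t(H)}$. Therefore $s_{t(H)}$ is a reflection lying in $W$, which means by the very definition of $\A_W$ that $t(H) \in \A$. Thus every element of $W$ maps each hyperplane of $\A$ to a hyperplane of $\A$, i.e.\ $W$ acts on the finite set $\A$; since each $t \in W$ is invertible and $\A$ is finite, this action is by permutations. Finally one should note $t(H) \neq H$ is not required — the statement only asserts $W$ permutes the hyperplanes, allowing fixed points — and that $t(H)$ is genuinely a linear hyperplane through the origin because $t$ is linear and invertible, so it carries the codimension-$1$ subspace $H$ to another codimension-$1$ subspace containing $0$.

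There is essentially no serious obstacle here; the only point requiring a little care is the verification that $t s_H t^{-1} = s_{t(H)}$, and specifically that orthogonality of $t$ is what guarantees $t(H^{\perp}) = t(H)^{\perp}$ — without orthogonality the conjugate would still be an involution fixing a hyperplane but one would not immediately identify it as the \emph{reflection} in the sense defined (identity on the hyperplane, $-1$ on its orthogonal complement). Since the definition of a finite reflection group in the excerpt builds in that $W$ consists of orthogonal transformations, this is automatic. I would present the conjugation lemma as a one-line computation and then deduce the theorem in two sentences.
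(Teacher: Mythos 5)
Your proof is correct and follows exactly the same approach as the paper: both rest on the conjugation identity $s_{wH} = w\, s_H\, w^{-1}$ for $w \in W$, from which it follows that $wH \in \A$ whenever $H \in \A$. You simply spell out the one-line verification that the paper leaves to the reader.
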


\begin{proof}
One easily verifies that $s_{wH} = ws_Hw^{-1}$ when $s_H \in W$ and $w \in W$.  
This implies that $W$ permutes the hyperplanes of $\A$.
\end{proof}

Since each reflection $s_H \in W$ is linear and every face of $\A$ is an 
intersection of half-spaces and hyperplanes of $\A$, we have the following 
corollary.

\begin{corollary}
If $\A$ is a reflection arrangement with reflection group $W$, then $W$ permutes 
the faces of $\A$.  Furthermore, $W$  preserves the face relation determined by 
$\A$. \hfill $\Box$
\end{corollary}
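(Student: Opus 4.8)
The plan is to prove the corollary by combining the preceding theorem (that $W$ permutes the hyperplanes of $\A$) with the linearity of each $w \in W$ and the explicit description of faces of $\A$. First I would recall that every face $F \in \mathcal{F}(\A)$ is, by definition, a nonempty set of the form
\[
F = \bigcap_{H \in \A} H^{\epsilon(H)},
\]
where $\epsilon(H) \in \{+, -, 0\}$ and $H^{0}$ denotes $H$ itself. In other words, $F$ is cut out by choosing, for each hyperplane, one of its two open half-spaces or the hyperplane itself. The key observation is that a linear automorphism $w$ of $\R^\ell$ carries each such piece $H^{\epsilon(H)}$ to $(wH)^{\epsilon'}$ for an appropriate sign $\epsilon'$: since $w$ is invertible and linear, it maps the hyperplane $H$ onto the hyperplane $wH$ and maps each open side of $H$ onto one of the two open sides of $wH$ (which side depends on the chosen orientation of $wH$, but in any case it is an open half-space of $wH$).

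Next I would assemble these pieces. Applying $w$ to the displayed intersection and using that $w$ is a bijection, we get
\[
wF = w\!\left(\bigcap_{H \in \A} H^{\epsilon(H)}\right) = \bigcap_{H \in \A} w\big(H^{\epsilon(H)}\big) = \bigcap_{H \in \A} (wH)^{\epsilon'(H)}.
\]
By the preceding theorem, as $H$ ranges over $\A$ so does $wH$, so the right-hand side is again an intersection over all hyperplanes of $\A$ of a choice of side or hyperplane for each one; since $wF$ is nonempty (it is the image of a nonempty set under a bijection), it is a face of $\A$. Thus $w$ maps $\mathcal{F}(\A)$ into itself, and because $w^{-1} \in W$ does likewise, $w$ permutes $\mathcal{F}(\A)$. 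This gives an action of $W$ on $\mathcal{F}(\A)$.

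Finally, to see that $W$ preserves the face relation, suppose $F' \leq F$, i.e.\ $F' \subseteq \bar{F}$. Since $w$ is a homeomorphism of $\R^\ell$, it commutes with closure: $w\bar{F} = \overline{wF}$. Hence $wF' \subseteq w\bar{F} = \overline{wF}$, which says exactly $wF' \leq wF$. So each $w \in W$ is a poset automorphism of $\mathcal{F}(\A)$. I do not anticipate a genuine obstacle here; the only point requiring a little care is the bookkeeping of signs when tracking how $w$ moves the positive side of a hyperplane — but since the definition of a face only records which of the three regions $H^{+}, H^{-}, H$ a point lies in, and $w$ respects this trichotomy up to relabelling $+$ and $-$, the argument goes through cleanly regardless of orientations.
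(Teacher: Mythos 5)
Your proof is correct and takes essentially the same route the paper does (the paper states it in a single sentence before the corollary): each $w \in W$ is linear, faces are intersections of half-spaces and hyperplanes, and the preceding theorem says $W$ permutes the hyperplanes, so $W$ permutes the faces and, being a homeomorphism, respects closure and hence the face order. You have simply filled in the bookkeeping the paper leaves implicit.
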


The following lemma provides us with a useful way of thinking about reflections.

\begin{lemma}
If $s_{H_i}$ and $s_{H_j}$ are reflections over the hyperplanes $H_i$ 
and $H_j$ in $\R^\ell$, respectively.  Then $s_{H_i}s_{H_j} \in W$ and 
$s_{H_i}s_{H_j}$ is a rotation about $H_i \cap H_j$ of twice the angle between 
$H_i$ and $H_j$.  \hfill $\Box$
\end{lemma}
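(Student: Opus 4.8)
The plan is to prove this as a fact from Euclidean linear algebra, reducing to the two-dimensional case. First I would observe that $s_{H_i}s_{H_j}$ is a composition of two orthogonal linear transformations, hence orthogonal, and has determinant $(-1)(-1)=1$, so it is a rotation (an element of $SO(\ell)$). If $H_i = H_j$ the statement is trivial (the identity is a rotation of angle $0$ about the whole hyperplane), so I would assume $H_i \neq H_j$.

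Next I would isolate the plane where the action is nontrivial. Let $L = H_i \cap H_j$; since the hyperplanes are distinct, $L$ has codimension $2$. I would write $\R^\ell = L \oplus P$ where $P = L^\perp$ is a $2$-dimensional subspace. The key point is that both $H_i$ and $H_j$ contain $L$, so each $s_{H_k}$ fixes $L$ pointwise and restricts to an orthogonal map on $P$; moreover $H_k \cap P$ is a line through the origin in $P$, and $s_{H_k}|_P$ is precisely the reflection of the plane $P$ across that line. Thus $s_{H_i}s_{H_j}$ acts as the identity on $L$ and as a composition of two planar reflections on $P$.

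I would then invoke the elementary fact that in the Euclidean plane, the composition of reflections across two lines through the origin meeting at angle $\theta$ is the rotation about the origin through angle $2\theta$ (in the appropriate orientation). This gives that $s_{H_i}s_{H_j}$ restricted to $P$ is rotation by twice the angle between the lines $H_i \cap P$ and $H_j \cap P$, which is the same as the angle between $H_i$ and $H_j$. Combined with the fact that it fixes $L = H_i \cap H_j$ pointwise, this is exactly the assertion that $s_{H_i}s_{H_j}$ is a rotation about $H_i \cap H_j$ of twice the angle between $H_i$ and $H_j$. Finally, $s_{H_i}s_{H_j} \in W$ is immediate since $W$ is a group containing both $s_{H_i}$ and $s_{H_j}$.

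There is no serious obstacle here; the statement is essentially a packaging of standard facts. The only point requiring a little care is the orthogonal direct sum decomposition argument — verifying that each reflection genuinely splits as identity-on-$L$ times a planar reflection-on-$P$, and identifying the planar angle with the dihedral angle between the hyperplanes. One might also want to be careful about orientation conventions when saying "rotation by twice the angle," but since the lemma is stated without committing to a sign convention, this is not an obstruction. I would likely present the two-dimensional reflection-composition fact as known (it is classical) rather than rederiving it.
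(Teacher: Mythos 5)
The paper states this lemma without proof (the terminal $\Box$ marks it as a standard fact taken for granted), so there is no paper argument to compare against. Your proof is correct and is the standard one: orthogonally decompose $\R^\ell = L \oplus L^\perp$ with $L = H_i \cap H_j$, observe that each reflection fixes $L$ pointwise and restricts to a planar reflection on $L^\perp$, and invoke the classical fact that two planar reflections compose to a rotation by twice the angle between their axes. One small caution: your parenthetical ``it is a rotation (an element of $SO(\ell)$)'' conflates the two notions, which is only accurate for $\ell \le 3$; in general an element of $SO(\ell)$ need not be a rotation in a single $2$-plane. But your subsequent decomposition argument is exactly what establishes that this particular element is such a rotation, so the reasoning as a whole is sound.
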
 

The requirement that $W$ be finite is very important.  Suppose, for instance, 
that $W$ contains the reflections $s_{H_1}$ and $s_{H_2}$.  Since $W$ is finite, 
the rotation $s_{H_1}s_{H_2}$ has finite order.  Then the angle between $H_1$ 
and $H_2$ must be a rational multiple of $\pi$.  In higher dimensions, the 
restrictions on the angles between any two hyperplanes is even more severe.  In 
general, we have the following result.

\begin{proposition}\label{condition}
Let $\A$ be an essential reflection arrangement in $\R^\ell$ with reflection 
group $W$.  When $\ell=2$, $W$ is a finite reflection group iff the angle 
between any pair of hyperplanes $H_i$ and $H_j$ of $\A$ is a rational multiple 
of $\pi$. When $\ell>2$, $W$ is an irreducible finite reflection group only if 
the angle between any pair of hyperplanes $H_i$ and $H_j$ of $\A$ is $\pi/2$, 
$\pi/3$, $\pi/4$, $\pi/5$, or $\pi/6$. \hfill $\Box$
\end{proposition}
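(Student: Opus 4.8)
The plan is to treat $\ell=2$ by an elementary argument about subgroups of the circle group, and $\ell>2$ by reducing to rank‑two subgroups and then invoking the classification of finite reflection groups (as developed in \cite{BR}, \cite{HU}). Throughout, $W$ denotes the group generated by the reflections $s_H$, $H\in\A$, so $W\subseteq O(\ell)$.

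\bigskip

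For $\ell=2$, an essential $\A$ is a finite set of distinct lines $H_1,\dots,H_n$ through the origin. If $W$ is finite, then by the Lemma immediately preceding this proposition each $s_{H_i}s_{H_j}$ is a rotation through twice the angle $\theta_{ij}$ between $H_i$ and $H_j$; being an element of the finite group $W$ it has finite order $m_{ij}$, so $2\theta_{ij}\in(2\pi/m_{ij})\Z$ and $\theta_{ij}$ is a rational multiple of $\pi$. Conversely, suppose every $\theta_{ij}=\pi a_{ij}/b_{ij}$ with $a_{ij},b_{ij}\in\Z$. Let $W^{+}=W\cap SO(2)$; since $\det$ maps $W$ into $\{\pm1\}$, $[W:W^{+}]\le 2$, and each element of $W^{+}$ is a product of an even number of the $s_{H_k}$, hence a product of rotations of the form $s_{H_i}s_{H_j}$. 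Each such rotation has order dividing $b_{ij}$, so $W^{+}$ is generated by finitely many torsion elements of $SO(2)\cong\R/\Z$; any such subgroup lies in the finite cyclic group of elements of order dividing $N:=\operatorname{lcm}_{i,j}b_{ij}$, so $W^{+}$, and therefore $W$, is finite.

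\bigskip

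For $\ell>2$, fix distinct $H_i,H_j\in\A$ and put $U=H_i\cap H_j$ (codimension $2$) and $P=U^{\perp}$. The subgroup $D=\langle s_{H_i},s_{H_j}\rangle\le W$ fixes $U$ pointwise, so it acts on the plane $P$ as a finite group generated by the two reflections across the lines $H_i\cap P$ and $H_j\cap P$; hence $D\cong I_2(m)$ for $m$ the order of $s_{H_i}s_{H_j}$, and in $I_2(m)$ the mirrors are equally spaced by $\pi/m$, so the angle between $H_i$ and $H_j$ is an integer multiple of $\pi/m$ (equal to $\pi/m$ exactly when $H_i,H_j$ are walls of a common chamber). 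It remains to bound $m$. Here I would invoke Steinberg's fixed‑point theorem: the pointwise stabilizer $W_U=\{w\in W:w|_U=\mathrm{id}\}$ is generated by the reflections it contains and is conjugate in $W$ to a standard parabolic subgroup $\langle s_a,s_b\rangle$ on two simple reflections, which (as $U$ has codimension $2$) is the dihedral group $I_2(m_{ab})$; since $D\le W_U$, $m\mid m_{ab}$. Inspecting the irreducible Coxeter diagrams of rank $\ge 3$ — types $A_n$, $B_n=C_n$, $D_n$, $E_6$, $E_7$, $E_8$, $F_4$, $H_3$, $H_4$ — every label is $2$, $3$, $4$, or $5$, so $m\in\{2,3,4,5\}$ and the angle is one of $\pi/2,\pi/3,\pi/4,\pi/5$ ($\pi/6$ appearing in the list only for safety, since it does not actually occur for $\ell>2$).

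\bigskip

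The main obstacle is exactly this rank‑two reduction: the classification constrains only the Coxeter labels $m_{ab}$ attached to pairs of \emph{simple} reflections, so one needs Steinberg's theorem to identify the stabilizer $W_U$ — and hence the abstract group $D$ — with a standard rank‑two parabolic before the classification can be applied. A secondary point to flag is the convention for ``angle'': inside an $I_2(5)$ subgroup two non‑adjacent mirrors meet at $2\pi/5$, so the statement is most safely read as the assertion that $\pi/m_{ij}\in\{\pi/2,\pi/3,\pi/4,\pi/5,\pi/6\}$, where $m_{ij}$ is the order of $s_{H_i}s_{H_j}$ — equivalently, that the angles between adjacent walls are as listed.
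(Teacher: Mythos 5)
The paper states this proposition without proof (the terminal box, as with Lemmas~\ref{L1}--\ref{L5}, signals a fact cited from the literature), so there is no in-text argument to compare against; the paragraph preceding the proposition only sketches the forward half of the $\ell=2$ case, which your argument reproduces verbatim.

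Your proof supplies the rest correctly. The $\ell=2$ converse is handled by a clean elementary argument: $W^{+} = W \cap SO(2)$ has index at most $2$ in $W$, is generated by the rotations $s_{H_i}s_{H_j}$, and each such rotation lies in the finite cyclic subgroup of $SO(2)$ consisting of elements of order dividing $\operatorname{lcm}_{i,j} b_{ij}$, so $W^{+}$ and hence $W$ is finite. For $\ell>2$ you correctly identify the crux: the classification of finite reflection groups only controls the Coxeter labels $m_{ab}$ attached to pairs of \emph{simple} reflections, so one must first know that the pointwise stabilizer $W_U$ of $U = H_i \cap H_j$ is conjugate to a standard rank-two parabolic $\langle s_a, s_b\rangle$ before the Coxeter diagram says anything about an arbitrary pair $H_i, H_j$. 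This is the parabolic-stabilizer theorem for Coxeter groups (Humphreys \S 1.12); ``Steinberg's theorem'' more usually names the unitary-reflection-group analogue, but the content you invoke is right. Your observation that $U$ has codimension exactly two, hence $W_U$ has rank exactly two, supplies the needed detail, and inspection of the rank-$\geq 3$ diagrams then gives $m_{ij}\in\{2,3,4,5\}$.

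You are also right to flag the imprecision in the statement. In type $H_3$ or $H_4$ there are mirror pairs inside an $I_2(5)$ parabolic meeting at acute angle $2\pi/5$, which is not on the printed list; the proposition is literally true only if ``angle between $H_i$ and $H_j$'' is read as $\pi/m_{ij}$ with $m_{ij}$ the order of $s_{H_i}s_{H_j}$, equivalently as the dihedral angle between adjacent walls. And as you note, $\pi/6$ never occurs for irreducible $W$ with $\ell>2$. These are imprecisions in the proposition rather than gaps in your argument; the one cosmetic slip is that your main paragraph asserts ``the angle is one of $\pi/2,\pi/3,\pi/4,\pi/5$'' and then lets the caveat correct it --- it would be cleaner to state the conclusion directly in terms of $m_{ij}$ from the start.
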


Here, irreducible means irreducible as representations of abstract groups.  
Finite reflection groups have been completely classified up to isomorphism.  
Below, we list the irreducible finite reflection groups briefly with little 
explanation.  Also, the hyperplane arrangements associated to each reflection 
group are assumed to be essential.  All other finite reflection groups can be 
obtained from these by taking direct sums and, possibly, adding an extra summand 
on which the group acts trivially.

\begin{itemize}

\item  \emph{Type $A_n$} ($n \geq 1$):  Here $W$ is the symmetric 
group on $n+1$ letters, acting on a certain $n$-dimensional subspace of 
$\R^{n+1}$ by permuting the coordinates.  This group is the group of symmetries 
of a regular $n$-simplex, and it can also be described as the Weyl group of the 
root system of type $A_n$.

\item  \emph{Type $B_n$} ($n \geq 2$):  This is the group $W$ of 
signed permutations acting on $\R^n$.  The group $W$ is the group of 
symmetries of the $n$-cube $[-1,1]^n$; it is also the Weyl group of the root 
system of type $B_n$.  

\item  \emph{Type $D_n$} ($n \geq 4$):  This group $W$ is the Weyl group of 
a root system, called the root system of type $D_n$, but it does not 
correspond to any regular solid.  It is the group of signed permutations with an 
even number of sign changes.  It also happens to be a subgroup of index $2$ 
of the reflection group of type $B_n$.

\item  \emph{Type $E_n$} ($n = 6,7,8$):  This is the Weyl group of the 
root system of the same name.  It does not correspond to any regular solid.

\item  \emph{Type $F_4$}:  This is the Weyl group of the root system of the 
same name; it is also the group of symmetries of a certain self-dual $24$-sided 
regular solid in $\R^4$ whose 1-codimensional faces are solid octahedra.

\item  \emph{Type $H_n$} ($n = 3,4$):  This does not correspond to any 
root system, but it is the symmetry group of a regular solid $X$.  When $n = 
3$, $X$ is the dodecahedron or, dually, the icosahedron.  When $n = 4$, $X$ 
is a $120$-sided solid in $\R^4$ with dodecahedral faces or, dually, a 
$600$-sided solid with tetrahedral faces.  

\item  \emph{Type $I_2(m)$} ($m = 5$ or $m \geq 7$):  The group $W$ is the 
dihedral group of order $2m$.  It is the symmetry group of a regular $m$-gon, 
but it does not correspond to any root system.

\end{itemize}

\begin{example}
Figure \ref{square} shows the square with its lines of symmetry.  The four 
lines of symmetry form the hyperplane arrangement $\A$, which has the 
corresponding finite reflection group of type $B_2$.  The finite reflection 
group consists of all distinct products of the reflections $s_{H_1}, s_{H_2}, 
s_{H_3}$, and $s_{H_4}$.  Notice that for any pair of chambers of $\A$, there 
exists a product of reflections which maps one chamber to the other.  That is, 
$B_2$ acts transitively on the chambers of $\A$.
\begin{figure}[h]
\begin{center}

\includegraphics[width=6cm]{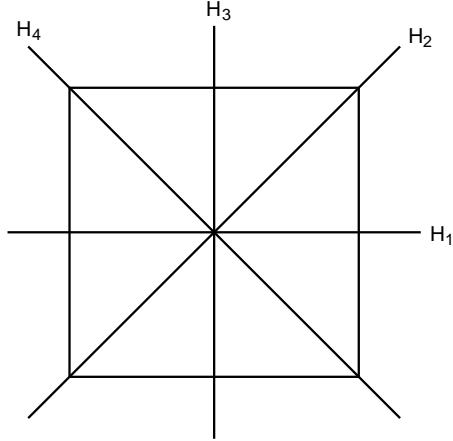}

\caption{A finite reflection arrangement of type $B_2$}
\label{square}
\end{center}
\end{figure}
\end{example}

Now, assume that $\A$ is an essential reflection arrangement in $\R^\ell$ with 
reflection group $W$.  Anytime we fix a chamber $C_0$ of $\A$, we will refer to 
$C_0$ as the \emph{base chamber} for $\A$.  Using this notion of base chamber, 
we will list a series of facts about finite reflection groups without proof.

\begin{lemma}\label{L1}
Let $\A$ be an essential reflection arrangement in $\R^\ell$ with reflection 
group $W$.  Suppose that $C_0$ is the base chamber of $\A$ with walls $H_1, 
\ldots, H_n$.  Then $W$ is generated by the reflections $s_{H_1}, \ldots, 
s_{H_n}$. \hfill $\Box$
\end{lemma}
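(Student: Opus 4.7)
The plan is to let $W' = \langle s_{H_1}, \ldots, s_{H_n}\rangle \leq W$ be the subgroup generated by the reflections through the walls of the base chamber $C_0$, and to prove $W' = W$ by a two-step argument: first show that $W'$ already acts transitively on $\mathcal{C}(\A)$, then show that every hyperplane in $\A$ is a $W'$-translate of some wall of $C_0$, which forces every reflection of $W$ to lie in $W'$.

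For the transitivity step, I would introduce the separation distance $d(C,D)$ counting the number of hyperplanes of $\A$ that strictly separate the chambers $C$ and $D$ (equivalently, the number of hyperplanes crossed by a generic line segment from a point of $C$ to a point of $D$), and induct on $d(C_0, D)$. The base case $d(C_0,D)=0$ forces $D = C_0$. For the inductive step I claim that some wall $H_i$ of $C_0$ must separate $C_0$ from $D$: pick a generic segment from an interior point of $C_0$ to an interior point of $D$; the \emph{first} hyperplane it crosses is necessarily a wall of $C_0$. Applying $s_{H_i}$, which by the theorem just proved permutes the hyperplanes of $\A$ and hence the chambers, and which swaps the two sides of $H_i$ while fixing all hyperplane-separation counts across the other hyperplanes, one obtains $d(C_0, s_{H_i}(D)) = d(C_0,D) - 1$. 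By induction there is $w' \in W'$ with $w'(C_0) = s_{H_i}(D)$, whence $s_{H_i} w'(C_0) = D$ with $s_{H_i} w' \in W'$.

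Once transitivity is in hand, the rest is quick. Any hyperplane $H \in \A$ is a wall of at least one chamber $D$. Choose $w \in W'$ with $w(C_0) = D$; then $w^{-1}(H)$ is a wall of $C_0$, say $H_i$, and so $H = w(H_i)$. Using the identity $s_{wH} = w s_H w^{-1}$ noted just before the corollary in the excerpt, we get $s_H = w s_{H_i} w^{-1} \in W'$. Since $W$ is by definition generated by all its reflections $s_H$ with $H \in \A$, we conclude $W \subseteq W'$, and hence $W = W'$.

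I expect the main obstacle to be the transitivity step, and specifically the ``first crossing is a wall of $C_0$'' claim used to produce the separating wall. The subtle point is that one must choose the segment from $C_0$ to $D$ generically (avoiding codimension $\geq 2$ faces), and one must know that reflecting across a wall $H_i$ of $C_0$ strictly decreases the separation count, which uses that $s_{H_i}$ permutes $\A$ and that $H_i$ does separate $C_0$ from $D$. Everything else — conjugating reflections, and deducing $W = W'$ from the definition of $W$ as generated by reflections — is essentially formal given the preceding material in the chapter.
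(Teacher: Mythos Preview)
The paper does not actually prove this lemma: it is one of the ``series of facts about finite reflection groups'' that the text explicitly announces it will state \emph{without proof} (hence the $\Box$ immediately following the statement), referring the reader implicitly to \cite{BR} and \cite{HU}. So there is no argument in the paper to compare against.

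Your proposed proof is the standard one and is correct in outline: transitivity of $W'$ on chambers via induction on the separation count, followed by the conjugation identity $s_{wH} = w s_H w^{-1}$ (which the paper does establish in the proof of the theorem just above) to pull every reflection back into $W'$. One phrasing to tighten: the sentence ``swaps the two sides of $H_i$ while fixing all hyperplane-separation counts across the other hyperplanes'' is not literally right, since $s_{H_i}$ genuinely permutes the other hyperplanes rather than fixing them. The clean way to get $d(C_0, s_{H_i}D) = d(C_0,D)-1$ is to first note $d(C_0, s_{H_i}D) = d(s_{H_i}C_0, D)$ (because $s_{H_i}$ is an involution permuting $\A$ and hence preserving all separation counts), and then observe that $C_0$ and $s_{H_i}C_0$ are adjacent across $H_i$ alone, so passing from $C_0$ to $s_{H_i}C_0$ removes exactly $H_i$ from the separating set with $D$. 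With that adjustment your argument goes through.
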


\begin{lemma}\label{L2} 
Let $\A$ be an essential reflection arrangement in $\R^\ell$ with corresponding 
reflection group $W$.  Then the action of $W$ is transitive and free on the set 
of chambers.  In particular, the number of chambers is equal to the order of 
$W$.  \hfill $\Box$
\end{lemma}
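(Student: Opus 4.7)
The plan is to prove transitivity and freeness separately and then deduce the cardinality statement via orbit--stabilizer.

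For transitivity, I would argue via galleries. Given any chamber $C$, pick interior points $x_0 \in C_0$ and $x \in C$ and choose a generic piecewise-linear path from $x_0$ to $x$ that meets $\bigcup_{H\in\A} H$ only in finitely many points, each lying in a single hyperplane (avoidable since the codimension $\geq 2$ intersections form a proper closed subset). This produces a gallery $C_0 = D_0, D_1, \ldots, D_k = C$ of pairwise adjacent chambers, where $D_{j-1}$ and $D_j$ share a common wall $K_j \in \A$. For each $j$, the reflection $s_{K_j} \in W$ exchanges $D_{j-1}$ with $D_j$: it permutes chambers, fixes $K_j$ pointwise, and swaps the two local sides of $K_j$, so the convex chamber $D_{j-1}$ is mapped onto the unique chamber on the opposite side of $K_j$ along the common wall, namely $D_j$. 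A quick induction then shows $s_{K_k}s_{K_{k-1}} \cdots s_{K_1} C_0 = C$, and this element lies in $W$ since each $s_{K_j}$ does.

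For freeness, I would show $\mathrm{Stab}_W(C_0) = \{1\}$. Suppose $w \in W$ satisfies $wC_0 = C_0$, and use Lemma~\ref{L1} to choose a shortest expression $w = r_1 r_2 \cdots r_k$ with each $r_i$ a reflection across a wall of $C_0$. The partial products yield a closed gallery $C_0, r_1 C_0, r_1 r_2 C_0, \ldots, w C_0 = C_0$ in the chamber graph of $\A$. Since this gallery returns to its starting chamber, every hyperplane of $\A$ is crossed an even number of times; in particular, if $k > 0$ then some hyperplane $H$ is crossed at least twice. A standard exchange-condition argument then allows the corresponding pair of letters to be excised from the word for $w$, producing an expression of length $k-2$ and contradicting minimality. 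Hence $k = 0$ and $w = 1$.

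With transitivity and freeness in hand, the orbit--stabilizer theorem immediately gives
\[
|\mathcal{C}(\A)| \;=\; |W \cdot C_0| \;=\; |W|/|\mathrm{Stab}_W(C_0)| \;=\; |W|.
\]

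The main obstacle is the shortening step in the freeness argument. The geometric observation that a closed gallery crosses each hyperplane an even number of times is almost immediate from the signed-distance function $\alpha_H$ changing sign at each crossing, but promoting this to an algebraic cancellation requires care: one locates the first repeated wall $H$, crossed at steps $a$ and $b$, observes that the intermediate product $r_{a+1}\cdots r_{b-1}$ conjugates $r_a$ to $r_b$ (both are the reflection across $H$ after translation by this product), and uses this to delete the matched pair while leaving the total element $w$ unchanged.
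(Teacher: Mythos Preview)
Your argument is correct. Note, however, that the paper does not actually prove this lemma: it is one of the ``facts about finite reflection groups'' the paper lists without proof, and the only content supplied is the one-sentence remark immediately following the statement, namely that freeness is obtained by showing that any $w \in W$ fixing a closed chamber $\bar{C}$ setwise must fix it pointwise, and hence (since $\bar{C}$ spans $\R^\ell$ when $\A$ is essential) must be the identity.

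Your route to freeness is genuinely different from that hint. The paper's suggestion is geometric/fixed-point in flavor: one produces a point of $C$ fixed by $w$ (e.g.\ by averaging an orbit), observes it lies on no reflecting hyperplane, and then invokes the structure of point stabilizers (as in the paper's Lemma~\ref{L4}, or the inductive arguments in \cite{BR,HU}) to conclude $w=1$. Your route is combinatorial/Coxeter-theoretic: a minimal $S$-expression for $w$ determines a closed gallery based at $C_0$, parity forces a repeated wall, and the conjugation identity you state in the last paragraph --- which is exactly right, since the wall crossed at step $j$ is $(r_1\cdots r_{j-1})H_{i_j}$ with associated reflection $(r_1\cdots r_{j-1})r_j(r_1\cdots r_{j-1})^{-1}$ --- lets you delete the matched pair, contradicting minimality. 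This is essentially the geometric proof of the deletion condition. Either approach works; yours is more self-contained here since it does not lean on the stabilizer description in Lemma~\ref{L4}, though it does invoke Lemma~\ref{L1}. Your gallery argument for transitivity is standard and correct, and the paper offers nothing to compare it against.
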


Recall that an action is \emph{free} if point stabilizers are trivial.  To prove 
that $W$ acts freely, one shows that any $s \in W$ which fixes a closed 
chamber $\bar{C}$ setwise, also fixes $\bar{C}$ pointwise, and hence must be the 
identity. 

\bigskip

Lemma \ref{L2} also implies that there is a one-to-one correspondence between 
the chambers of $\A$ and the elements of $W$.  That is, we can identify chambers 
with elements of $W$.  If $C_0$ is the base chamber of $\A$, then identify $C_0$ 
with the identity element of $W$.  Then for a chamber $C \neq C_0$ of $\A$, 
identify $C$ with the unique element $w \in W$ such that $wC_0 = C$.  The 
element $w$ exists because $W$ is transitive and is unique since $W$ acts freely 
on chambers.  Note that we can write $w$ as a product of reflections $s_{H_1}, 
\ldots, s_{H_n}$, where $H_1, \ldots, H_n$ are the walls of $C_0$, according to 
Lemma \ref{L1}.  

\begin{lemma}\label{L4}
Let $\A$ be an essential reflection arrangement in $\R^\ell$ with reflection 
group $W$.  Suppose that $C_0$ is the base chamber of $\A$ with walls $H_1, 
\ldots, H_n$, so that $S = \{s_{H_1}, \ldots, s_{H_n}\}$ is the set of 
reflections with respect to the walls of $C_0$.  Then $\bar{C_0}$ is a set of 
representatives for the $W$-orbits in $\R^\ell$.  Moreover, the stabilizer $W_x$ 
of a point $x \in \bar{C_0}$ is the subgroup generated by $S_x = \{s \in S: 
sx=x\}$.  In particular, $W_x$ fixes every point of $\bar{F}$, where $F \leq 
C_0$ is the smallest face containing $x$.  \hfill $\Box$
\end{lemma}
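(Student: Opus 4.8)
The plan is to reconstruct the classical argument for these facts (see \cite{BR} or \cite{HU}), establishing all three assertions at once: that $\overline{C_0}$ meets every $W$-orbit, that it meets each orbit in exactly one point, and the stated description of point stabilizers. Since $W$ acts by orthogonal transformations, the first step is to fix a point $p$ in the open base chamber $C_0$ and orient each wall $H_i$ of $C_0$ so that $C_0 \subseteq H_i^+$; then the defining form $\alpha_i$ of $H_i$ satisfies $\alpha_i(p) > 0$. Given $x \in \R^\ell$, the orbit $Wx$ is finite, so pick $z \in Wx$ maximizing $\langle z, p\rangle$. If $z \notin \overline{C_0}$ then $\alpha_i(z) < 0$ for some wall $H_i$; taking $n_i$ to be the inward unit normal to $H_i$, so that $\langle n_i,p\rangle > 0$ and $\langle n_i,z\rangle < 0$, the reflection formula $s_{H_i}z = z - 2\langle z,n_i\rangle n_i$ gives
$$\langle s_{H_i}z,p\rangle - \langle z,p\rangle = -2\,\langle z,n_i\rangle\,\langle n_i,p\rangle > 0,$$
and $s_{H_i}z \in Wx$ by Lemma \ref{L1}, contradicting maximality. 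So $z \in \overline{C_0}$, which proves the first assertion.

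For uniqueness and the stabilizer description, I would argue by induction on word length. By Lemma \ref{L1} every $w \in W$ is a product of elements of $S$; let $L(w)$ be the least number of such factors. The claim to prove is: \emph{if $\lambda,\mu \in \overline{C_0}$ with $w\lambda = \mu$, then $\lambda = \mu$ and $w$ is a product of reflections from $S$ each fixing $\lambda$.} The case $L(w)=0$ is trivial. If $L(w) > 0$, let $H_i$ be a wall of $C_0$ whose reflection is the first letter of a shortest expression for $w$, so $L(s_{H_i}w) = L(w)-1$. The key step is that $L(s_{H_i}w) < L(w)$ forces $H_i$ to separate $C_0$ from $wC_0$; granting this, $w\overline{C_0} = \overline{wC_0}$ lies in the closed half-space $H_i^- \cup H_i$ while $\overline{C_0} \subseteq H_i^+ \cup H_i$, so $\mu \in \overline{C_0} \cap w\overline{C_0} \subseteq H_i$ and hence $s_{H_i}\mu = \mu$. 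Then $(s_{H_i}w)\lambda = s_{H_i}\mu = \mu$ with $L(s_{H_i}w) < L(w)$, so by induction $\lambda = \mu$ and $s_{H_i}w$ factors into reflections from $S$ fixing $\lambda$; as $s_{H_i}$ fixes $\mu = \lambda$ too, so does $w = s_{H_i}(s_{H_i}w)$. Taking $\lambda = \mu$ gives uniqueness of orbit representatives in $\overline{C_0}$; taking $\lambda = \mu = x$ shows every $w \in W_x$ is a product of reflections from $S_x = \{s \in S : sx = x\}$, and since $S_x \subseteq W_x$ trivially, $W_x = \langle S_x\rangle$.

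The last sentence is then short. The open faces of $\A$ partition $\R^\ell$, so $x$ lies in a unique face $F$, which is the smallest face containing it, and $F \leq C_0$ since $x \in \overline{C_0}$. For each generator $s_H \in S_x$ we have $s_Hx = x$, which for the reflection in $H$ means $x \in H$; since $F$ is a face of $\A$ it lies either inside $H$ or strictly on one side of $H$, and $x \in F \cap H$ forces $F \subseteq H$, hence $\overline{F} \subseteq H$. As $s_H$ is the identity on $H$ it fixes $\overline{F}$ pointwise, so $W_x = \langle S_x\rangle$ fixes $\overline{F}$ pointwise.

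The main obstacle is the one fact used without proof in the induction: that $L(s_{H_i}w) < L(w)$ is equivalent to $H_i$ separating $C_0$ from $wC_0$. This is the standard correspondence between reduced expressions in $(W,S)$ and minimal galleries of chambers, resting on Lemmas \ref{L1} and \ref{L2} — namely, $L(w)$ equals the gallery distance from $C_0$ to $wC_0$, a minimal gallery crosses each separating hyperplane exactly once, and if the wall $H_i$ of $C_0$ separates $C_0$ from $wC_0$ then some minimal gallery starts by crossing $H_i$, whose tail, reflected in $H_i$, is a gallery from $C_0$ to $s_{H_i}wC_0$ that is one step shorter. I would either invoke this from the theory of finite reflection groups (\cite{BR}, \cite{HU}) or establish it along these lines; everything else in the argument is elementary linear algebra together with the face decomposition of $\R^\ell$.
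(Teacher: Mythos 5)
The paper states this lemma without proof, listing it among a series of standard facts and deferring to \cite{BR} and \cite{HU}, so there is no internal proof to compare against; your job was in effect to reconstruct the classical argument, and you have done so correctly. The maximization of $\langle z, p\rangle$ over the finite orbit, combined with the reflection formula, cleanly gives a representative in $\bar{C_0}$; the induction on word length is the right device because it proves uniqueness and the stabilizer description simultaneously; and the closing paragraph, observing that each $s_H \in S_x$ satisfies $x \in H$, hence $F \subseteq H$ (a face has constant sign relative to each hyperplane), hence $\bar{F} \subseteq H$ is fixed pointwise by $s_H$, is a tidy way to get the final clause.

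The one thing to be honest about is how much weight the unproved fact is carrying. You correctly identify it: $L(s_{H_i}w) < L(w)$ if and only if the wall $H_i$ of $C_0$ separates $C_0$ from $wC_0$ (equivalently, $L(w)$ equals gallery distance, and a minimal gallery crosses precisely the separating hyperplanes, each once). This is not a throwaway remark; it is the substantive content of the ``geometry of reduced words'' and is typically established via the exchange or deletion condition, which in turn rests on the simple transitivity of $W$ on chambers (Lemma \ref{L2}) and on arguments quite similar in flavor to the induction you are running. Your sketch at the end gives only the direction ``separates implies shorter''; to close the loop you also need that galleries change distance by exactly one per step, so that the dichotomy $L(s_{H_i}w) = L(w) \pm 1$ lets you deduce the converse. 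Since you flag the dependency explicitly and cite the right sources, the proposal is sound as a blueprint; but a fully self-contained write-up would need that length-versus-separating-walls lemma proved or at least stated precisely with a pinpoint citation, because it is doing most of the real work in the uniqueness and stabilizer parts.
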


\begin{lemma}\label{L5}
Let $\A$ be an essential reflection arrangement in $\R^\ell$ with reflection 
group $W$.  Suppose that $C_0$ is the base chamber of $\A$ with walls $H_1, 
\ldots, H_n$.  Then each face $F$ of $\A$ is congruent to a unique face $F_0$ of 
$C_0$ under the action of $W$.  Furthermore, if $w \in W$ with $wC_0 = C$ and 
$F_0 \leq C_0$, then $wF_0 = F \leq C$.  \hfill $\Box$
\end{lemma}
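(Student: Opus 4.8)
The plan is to derive the statement from Lemma~\ref{L4}, reading its phrase ``$\bar{C_0}$ is a set of representatives for the $W$-orbits in $\R^\ell$'' as the assertion that every $W$-orbit in $\R^\ell$ meets $\bar{C_0}$ in \emph{exactly one} point. The only auxiliary fact I will need is the elementary one that $\bar{C_0}$ is the disjoint union of the faces $F_0$ with $F_0 \leq C_0$; consequently a face of $\A$ that contains even a single point of $\bar{C_0}$ is automatically a face $\leq C_0$. I will also use the corollary that $W$ permutes the faces of $\A$ and preserves the face relation, together with the fact that distinct faces of $\A$ are disjoint.

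For existence, I would take an arbitrary face $F$ of $\A$, pick a point $x \in F$, and apply Lemma~\ref{L4} to obtain $w \in W$ with $w^{-1}x \in \bar{C_0}$. Letting $F_0$ be the face of $\A$ containing $w^{-1}x$, the remark above gives $F_0 \leq C_0$. Since $W$ permutes faces, $wF_0$ is a face of $\A$, and it contains $w(w^{-1}x) = x$; as faces are disjoint, $wF_0 = F$. Thus $F$ is congruent to the face $F_0 \leq C_0$. For uniqueness, suppose $wF_0 = w'F_0' = F$ with $F_0, F_0' \leq C_0$ and $w, w' \in W$. Choosing $x_0 \in F_0$, we have $x_0 \in \bar{C_0}$ and $(w')^{-1}w\,x_0 \in F_0' \subseteq \bar{C_0}$, and since these two points lie in a common $W$-orbit, Lemma~\ref{L4} forces $x_0 = (w')^{-1}w\,x_0$. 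Then $x_0 \in F_0 \cap F_0'$, so $F_0 = F_0'$ by disjointness of faces.

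The ``furthermore'' clause is then immediate: because $W$ preserves the face relation, $F_0 \leq C_0$ and $wC_0 = C$ give $wF_0 \leq wC_0 = C$, and one simply names $F := wF_0$. (Conversely, every $F \leq C$ is of this form, since $w^{-1}F \leq C_0$.) The only point that requires any care is the passage between ``point of $\bar{C_0}$'' and ``face $\leq C_0$'', together with invoking Lemma~\ref{L4} in its strong form --- that $\bar{C_0}$ meets each orbit exactly once, rather than merely at least once; the uniqueness half of the statement rests entirely on that.
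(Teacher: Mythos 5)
The paper states Lemma~\ref{L5} without proof; it appears among the ``series of facts about finite reflection groups'' the paper cites to \cite{BR} and \cite{HU}, so there is no in-paper argument to compare against. Your derivation from Lemma~\ref{L4} is correct and is the expected route: you read ``set of representatives'' in the strong sense (each $W$-orbit meets $\bar{C_0}$ in exactly one point), observe that $\bar{C_0}$ is the disjoint union of the faces $F_0 \leq C_0$ so that any face of $\A$ meeting $\bar{C_0}$ must lie in it, and then use this to push an arbitrary face into $\bar{C_0}$ for existence and the single-representative property of Lemma~\ref{L4} for uniqueness. Your parenthetical converse at the end also correctly captures what the ``furthermore'' clause really asserts: given $F \leq C$ and the unique $w$ with $wC_0 = C$, one has $w^{-1}F \leq C_0$, so by uniqueness $F_0 = w^{-1}F$ and hence $wF_0 = F$ --- i.e.\ the \emph{same} $w$ that carries $C_0$ to $C$ carries $F_0$ to $F$, which is stronger than the immediate observation that $wF_0 \leq C$.
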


Using the above series of lemmas, we can show that the simplicial decomposition 
of $S^{\ell-1}$ determined by $\A$ can be constructed to be invariant under the 
action of $W$.  

\begin{theorem}[{\rm Salvetti} \cite{SA1}]
If $\A$ is an essential reflection arrangement in $\R^\ell$ with reflection 
group $W$, then we can construct $S_\A'$ to be $W$-invariant.
\end{theorem}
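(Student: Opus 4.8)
The plan is to construct the barycentric subdivision $S_\A'$ by choosing the base points $x(F)$ in a $W$-equivariant way. Recall from Chapter 2 that $S_\A'$ is determined by a choice of point $x(F) \in \Delta(F) = F \cap S^{\ell-1}$ for each face $F \neq \{0\}$ of $\A$. If these choices satisfy $x(wF) = w \cdot x(F)$ for all $w \in W$ and all faces $F$, then a chain $\{0\} \neq F_0 < \cdots < F_n$ gets carried by $w$ to the chain $wF_0 < \cdots < wF_n$ (the face relation is preserved by Corollary after the permutation theorem, i.e.\ $W$ preserves the face poset), and the simplex $x(F_0) \vee \cdots \vee x(F_n)$ is carried to $x(wF_0) \vee \cdots \vee x(wF_n)$ because $w$ is linear (hence affine) and so commutes with taking convex hulls. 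Thus $W$ permutes the simplices of $S_\A'$ and $S_\A'$ is $W$-invariant.

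First I would fix the base chamber $C_0$ with walls $H_1, \dots, H_n$ and, for each face $F_0 \leq C_0$, make an arbitrary choice of base point $x(F_0) \in \Delta(F_0)$. Next I would extend this to all faces using Lemma \ref{L5}: every face $F$ of $\A$ is of the form $F = wF_0$ for a \emph{unique} face $F_0 \leq C_0$, and I would like to set $x(F) := w \cdot x(F_0)$. The subtlety is that the element $w$ with $wF_0 = F$ is \emph{not} unique in general — only the face $F_0 \leq C_0$ is unique, while $w$ is determined only up to right multiplication by the pointwise stabilizer of $\bar{F_0}$. So to see that $x(F)$ is well-defined, I would invoke Lemma \ref{L4}: if $w F_0 = w' F_0$ then $w^{-1}w'$ fixes $F_0$ setwise, hence (taking any point $y$ in the relative interior of $F_0$, so that $F_0$ is the smallest face containing $y$) lies in $W_y$, which by Lemma \ref{L4} fixes every point of $\bar{F_0}$ pointwise — in particular it fixes $x(F_0)$. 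Therefore $w \cdot x(F_0) = w' \cdot x(F_0)$, and $x(F)$ is well-defined.

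Once well-definedness is established, equivariance is essentially automatic: given any $g \in W$ and any face $F = wF_0$ with $F_0 \leq C_0$, we have $gF = (gw)F_0$ with the same $F_0 \leq C_0$, so by definition $x(gF) = (gw)\cdot x(F_0) = g\cdot(w \cdot x(F_0)) = g \cdot x(F)$. I would also remark that $x(F) \in \Delta(F)$ as required, since $x(F_0) \in \Delta(F_0) = F_0 \cap S^{\ell-1}$ and $w$ is orthogonal, so $w\cdot x(F_0) \in wF_0 \cap S^{\ell-1} = F \cap S^{\ell-1} = \Delta(F)$. Then the paragraph of Chapter 2 building $S_\A'$ from the points $x(F)$ goes through verbatim, and the discussion in the opening paragraph above shows the resulting simplicial decomposition is $W$-invariant.

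**The main obstacle** is precisely the well-definedness issue in the second step — one must resist the temptation to "choose $w$" and instead recognize that the stabilizer ambiguity is exactly killed by the pointwise-fixing clause of Lemma \ref{L4}. Everything else (equivariance, the convex-hull/linearity argument, membership in $\Delta(F)$) is a routine consequence once the points are correctly defined. A minor auxiliary point worth checking is that the simplices so obtained still form a genuine simplicial decomposition — but this is immediate, since $S_\A'$ for \emph{any} choice of base points is a simplicial decomposition by the construction in Chapter 2, and our choice is just one such choice.
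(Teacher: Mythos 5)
Your proposal follows essentially the same route as the paper: fix a base chamber $C_0$, choose base points $x(F_0)$ for the faces $F_0 \leq C_0$, extend to all faces by the $W$-action using Lemma~\ref{L5}, and conclude invariance from linearity and preservation of the face order. In fact you are a bit more careful than the paper on one point: the paper indexes the faces of each chamber $C_i$ and sets $x(F_{ij}) = s_i\,x(F_{0j})$ without explicitly verifying that a face shared by two chambers $C_i$ and $C_{i'}$ receives a consistent base point. Your explicit appeal to Lemma~\ref{L4} — that if $wF_0 = w'F_0$ then $w^{-1}w'$ stabilizes a relative interior point of $F_0$, hence fixes $\bar{F_0}$ pointwise, hence fixes $x(F_0)$ — is precisely the check needed to close that gap, and it is the right lemma to use. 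Everything else (equivariance, the orthogonality argument showing $x(F) \in \Delta(F)$, the convex-hull/linearity argument) matches the paper's reasoning.
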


\begin{proof}
Let $\A$ be an essential reflection arrangement in $\R^\ell$ with reflection 
group $W$.  First, fix a base chamber $C_0$ of $\A$.  Suppose that $F_{01}, 
\ldots, F_{0n}$ are the nonzero faces of $C_0$.  Now, pick a point $x(C_0) \in 
C_0 \cap S^{\ell-1}$ and a point $x(F_{0j}) \in F_{0j} \cap S^{\ell-1}$ for each 
$j$.  For each chamber $C_i \neq C_0$ of $\A$ with nonzero faces $F_{i1}, 
\ldots, F_{in}$, suppose that $s_i \in W$ is identified with $C_i$.  That is, 
	$$C_i = s_iC_0$$
for each $i$.  We can choose $s_i$ uniquely according to the remark following 
Lemma \ref{L2}.  Note that we know that each $C_i$ has the same number of faces 
as $C_0$ by Lemma \ref{L5}.  Now, set 
	$$x(C_i) = s_ix(C_0)$$
for each $i$.  Also by Lemma \ref{L5}, each face of $C_i$ is congruent to a 
unique face of $C_0$.  Without loss of generality, assume that $F_{ij}$ is 
congruent to $F_{0j}$ for each $j$.  Then we can set 
	$$x(F_{ij}) = s_ix(F_{0j})$$
for each $j$, according to Lemma \ref{L5}.  It is clear that each 
	$$x(F_{ij}) \in F_{ij} \cap S^{\ell-1}.$$  
Now, for every chain of faces 
	$$F_1 < \cdots < F_n\text{,}$$
form the simplex 
	$$x(F_1) \vee \cdots \vee x(F_n)$$
on $S^{\ell-1}$.  It is clear that $W$ is invariant on simplices of this form 
since each $s \in W$ is linear and $W$ preserves order.  With the simplicial 
decomposition of $S^{\ell-1}$ constructed in this way, it is 
clear that $S_\A'$ is $W$-invariant.  
\end{proof}

Note that if we include the point $x(\{0\})$ in our decomposition, then we can 
also construct $B_\A'$ to be $W$-invariant.

\begin{example}
Consider the reflection arrangement $\A$ given in Figure \ref{Wdecomp1}.  Let 
$W$ be the reflection group of $\A$ and let $C_0$ be the base chamber. For the 
faces $F_1$ and $F_2$ of $C_0$ we fix the points $x(F_1) \in F_1 \cap S^1$ and 
$x(F_2) \in F_2 \cap S^1$.  See Figure \ref{Wdecomp2}.  In this case, we don't 
have much choice as to what $x(F_1)$ and $x(F_2)$ are.  Next, pick $x(C_0) \in 
C_0 \cap S^1$.  The rest of the vertices of $S_\A'$ are chosen by looking at the 
orbits of the points $x(F_1), x(F_2)$, and $x(C_0)$, under combinations of 
reflections over the walls of $C_0$.  It should be clear from Figure 
\ref{Wdecomp2} that the indicated points are what we get.  The resulting 
simplicial decomposition of $S^1$ will be invariant under the action of $W$.   
\begin{figure}[h]
\begin{center}

\includegraphics[width=6cm]{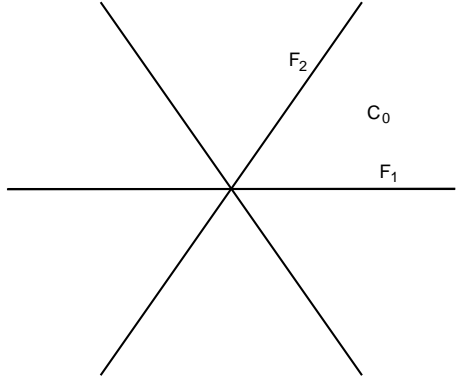}

\caption{A hyperplane arrangement}
\label{Wdecomp1}
\end{center}
\end{figure}

\begin{figure}[h]
\begin{center}

\includegraphics[width=6cm]{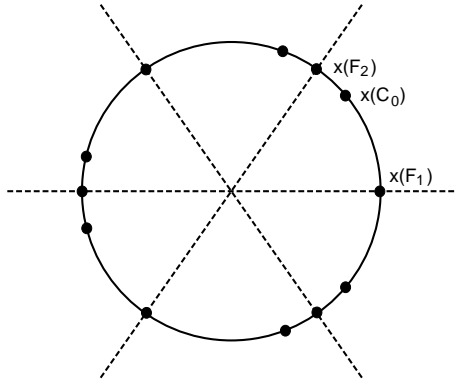}

\caption{$W$-invariant $S_\A'$}
\label{Wdecomp2}
\end{center}
\end{figure}
\end{example}

\begin{paragraph}{The Braid Arrangement}

One class of reflection arrangements which are of particular interest to us are 
called the \emph{braid arrangements}.  We denote the braid arrangement in 
$\R^\ell$ by $\A_{\ell-1}$.  For each dimension $\ell$, we define
	$$\A_{\ell-1} = \{H_{ij}: 1 \leq i < j \leq \ell\} \subseteq \R^\ell$$
where $H_{ij} = \{x \in \R^\ell: x_i = x_j\}$.  We orient each $H_{ij}$, so that 
$x_i > x_j$ defines the positive side of $H_{ij}$ for $i<j$.  So, each $H_{ij}$ 
is positive on the side containing the positive $x_i$ axis.

\bigskip

Clearly, each $\A_{\ell-1}$ is not essential, since 
	$$\bigcap \A_{\ell-1} = \{x \in \R^\ell: x_1 = \cdots = x_\ell\} \neq 
\{0\}.$$  
However, we can ``make" $\A_{\ell-1}$ essential and still preserve the face 
poset for $\A_{\ell-1}$.  Take the plane defined by
	$$x_1 + \cdots + x_\ell=0$$
perpendicular to the line 
	$$\{x_1 = \cdots = x_\ell\}$$
and then intersect the hyperplanes of $\A_{\ell-1}$ with that plane.  The ``new" 
arrangement will be isomorphic to an arrangement in $\R^{\ell-1}$ and will 
preserve the structure of the face poset for $\A_{\ell-1}$.

\begin{example}
Figure \ref{braid} represents the braid arrangement $\A_2$.  Note that this is 
not $\A_2$, but rather a projection of $\A_2$ onto the plane defined by
	$$x_1+x_2+x_3=0$$
perpendicular to the line 
	$$\{x_1 = x_2 = x_3 \}.$$
We will become more familiar with this particular arrangement as we continue.
\begin{figure}[h]
\begin{center}

\includegraphics[width=6cm]{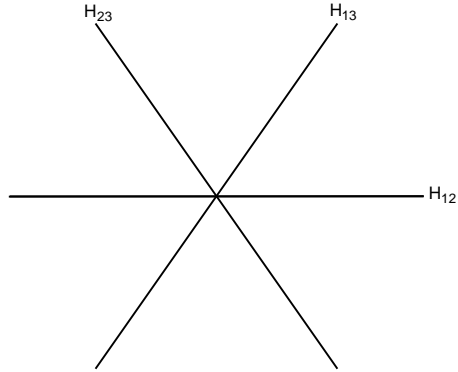}

\caption{The braid arrangement, $\A_2$.}
\label{braid}
\end{center}
\end{figure}
\end{example} 
 
Note that $\A_{\ell-1}$ has $\ell\,!$ chambers and that $\A_{\ell-1}$ consists 
of $\binom{\ell}{2}$ hyperplanes.  This implies that if $W$ is the finite 
reflection group associated with $\A_{\ell-1}$, then the order of $W$ is 
$\ell\,!$.  In fact, we have the following result, which we will state without 
proof.

\begin{proposition}\label{P1}
If $W$ is the finite reflection group associated to $\A_{\ell-1}$, then $W$ is 
of type  $A_{\ell-1}$.  That is, $W$ is isomorphic to the symmetric group on 
$\ell$ letters. \hfill $\Box$
\end{proposition}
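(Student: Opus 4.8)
The plan is to compute the reflections $s_{H_{ij}}$ of the braid arrangement explicitly, recognize each of them as a coordinate transposition, and deduce the isomorphism $W \cong S_\ell$ with essentially no further work.

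First I would fix $i < j$ and note that the orthogonal complement of $H_{ij} = \{x \in \R^\ell : x_i = x_j\}$ is the line spanned by $e_i - e_j$, where $e_1, \ldots, e_\ell$ is the standard basis of $\R^\ell$. Either by applying the reflection formula, or simply by checking that the linear map which interchanges the $i$-th and $j$-th coordinates fixes $H_{ij}$ pointwise and negates $e_i - e_j$, I would conclude that $s_{H_{ij}}$ is exactly the permutation matrix of the transposition $(i\,j) \in S_\ell$, acting on $\R^\ell$ by permuting coordinates. Thus every generating reflection of $W$ is a transposition.

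Next I would recall that $W$ is generated by the reflections $s_{H_{ij}}$ for $1 \le i < j \le \ell$ --- indeed already by the adjacent ones $s_{H_{k,k+1}}$, $1 \le k \le \ell-1$, since these are the walls of the chamber $\{x_1 > \cdots > x_\ell\}$ (Lemma \ref{L1}, applied to the essentialization of $\A_{\ell-1}$). Since the assignment of a permutation to its permutation matrix is an injective group homomorphism $S_\ell \to \mathrm{GL}_\ell(\R)$, and since the transpositions --- equivalently the adjacent transpositions $(k\ k{+}1)$ --- generate $S_\ell$, the group $W$ is precisely the image of this homomorphism, hence isomorphic to $S_\ell$. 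By the classification of irreducible finite reflection groups recalled above, the symmetric group on $\ell = (\ell - 1) + 1$ letters, realized in this way, is exactly the reflection group of type $A_{\ell-1}$; this is the assertion of the proposition.

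I do not expect a genuine obstacle here: the one substantive step is the explicit identification of $s_{H_{ij}}$ with a coordinate transposition, after which the result follows from the standard facts that transpositions generate the symmetric group and that the permutation representation is faithful. As a consistency check, Lemma \ref{L2} applied to the essential form of $\A_{\ell-1}$ gives $|W| = \#\{\text{chambers}\} = \ell!$, matching $|S_\ell|$; and one could, if desired, verify the Coxeter relations of the $A_{\ell-1}$ diagram directly, since $e_k - e_{k+1}$ and $e_{k+1} - e_{k+2}$ meet at angle $2\pi/3$ (so $(s_{H_{k,k+1}} s_{H_{k+1,k+2}})^3 = 1$), while $e_k - e_{k+1}$ and $e_m - e_{m+1}$ are orthogonal for $|k - m| \ge 2$ (so the corresponding reflections commute).
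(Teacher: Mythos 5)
The paper states Proposition~\ref{P1} explicitly ``without proof,'' offering only the preliminary observation that the number of chambers of $\A_{\ell-1}$ is $\ell!$, so there is no proof in the text to compare yours against. Your argument is correct and is the standard one: the explicit identification of $s_{H_{ij}}$ as the coordinate transposition $(i\,j)$ is exactly the right computation, the faithfulness of the permutation representation gives the isomorphism onto $S_\ell$, and the restriction to adjacent transpositions together with the Coxeter-relation check cleanly exhibits the type-$A_{\ell-1}$ structure. One small point worth tightening: when you say the vectors $e_k - e_{k+1}$ and $e_{k+1} - e_{k+2}$ ``meet at angle $2\pi/3$,'' that is the angle between the normals, so the dihedral angle between the hyperplanes is $\pi/3$, and the product of the two reflections is a rotation of order $3$; the conclusion $(s_{H_{k,k+1}} s_{H_{k+1,k+2}})^3 = 1$ you draw is right, but the intermediate phrasing could mislead a reader who applies the paper's own lemma that the composite of two reflections rotates by twice the angle between the hyperplanes.
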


If $H_{ij}$ is a hyperplane of $\A_{\ell-1}$, then the reflection $s_{H_{ij}}$ 
acts on $x \in \R^\ell$ by permuting the $i^{th}$ and $j^{th}$ components of 
$x$.  Thus, we can think of $s_{H_{ij}}$ as a transposition of the symmetric 
group.  We will make no distinction between the group of reflections of 
$\A_{\ell-1}$ and $S_\ell$.  

\bigskip

By Proposition \ref{P1}, we can identify the chambers of $\A_{\ell-1}$ with the 
permutations of the symmetric group.  In fact, if $C_0$ is the base chamber of 
$\A_{\ell-1}$ consisting of walls $H_1, \ldots, H_n$, then we can identify each 
chamber of $\A$ with a product consisting of the transpositions $s_{H_1}, 
\ldots, s_{H_n}$.

\bigskip

It is clear that the complement of the braid arrangement, $M(\A_{\ell-1})$, 
consists of all vectors $z=(z_1, \ldots, z_\ell) \in \C^\ell$ such that $z_1 
\neq \cdots \neq z_\ell$.  In other words, $M(\A_{\ell-1})$ can be thought of as 
the set of ordered $\ell$-tuples of distinct complex numbers.  To make this more 
precise we will introduce the following definition.

\begin{definition}
Define $\hat{F_\ell}(\C)$ to be the space of labelled configurations of $\ell$ 
distinct points in $\C$.
\end{definition}

\begin{example}
Figure \ref{labelled} shows two examples of labelled configurations of 
$\hat{F}_3(\C)$.  In each example the configuration of points is the same, 
but each is labelled differently, making each a unique element of 
$\hat{F}_3(\C)$.
\begin{figure}[h]
\begin{center}

\includegraphics[width=4cm]{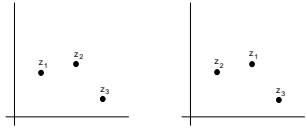}

\caption{Labelled point configurations of $\hat{F}_3(\C)$.}
\label{labelled}
\end{center}
\end{figure}
\end{example}

A point $z=(z_1, \dots, z_\ell) \in M(\A_{\ell-1})$ determines the labelled 
configuration of points consisting of $z_1, \ldots, z_\ell$.  Also, each 
labelled configuration of $\hat{F_\ell}(\C)$ clearly determines a point $z \in 
M(\A_{\ell-1})$.  Hence, we have the following result.

\begin{proposition}
The complement of the braid arrangement, $M(\A_{\ell-1})$, is homeomorphic to 
$\hat{F_\ell}(\C)$.  \hfill $\Box$
\end{proposition}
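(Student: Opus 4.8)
The plan is to exhibit an explicit bijection between the two spaces and to check that it is a homeomorphism; once the topology on $\hat{F_\ell}(\C)$ is made precise the statement is essentially a matter of unwinding definitions. First I would record, from Chapter \ref{chap2}, the concrete description of $M(\A_{\ell-1})$. The complexification of $H_{ij} = \{x \in \R^\ell : x_i = x_j\}$ is $H_{ij}+iH_{ij} = \{z \in \C^\ell : z_i = z_j\}$, since $H_{ij}$ is cut out by the real-linear equation $x_i - x_j = 0$ and the complexified hyperplane is cut out by the same equation in complex variables. Hence
$$M(\A_{\ell-1}) = \C^\ell - \bigcup_{i<j}(H_{ij}+iH_{ij}) = \{(z_1,\ldots,z_\ell) \in \C^\ell : z_i \neq z_j \text{ for } i<j\},$$
that is, the set of ordered $\ell$-tuples of pairwise distinct complex numbers, exactly as noted in the paragraph preceding the statement.

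Next I would define $\Phi \colon M(\A_{\ell-1}) \to \hat{F_\ell}(\C)$ by sending $z = (z_1,\ldots,z_\ell)$ to the labelled configuration that places the point labelled $k$ at $z_k$, for each $k \in \{1,\ldots,\ell\}$. This is a legitimate element of $\hat{F_\ell}(\C)$ precisely because the coordinates $z_k$ are pairwise distinct. Conversely, a labelled configuration assigns to each label $k$ a position $w_k \in \C$ with the $w_k$ distinct, and $(w_1,\ldots,w_\ell) \in M(\A_{\ell-1})$; this prescription defines a two-sided inverse $\Psi$ of $\Phi$, so $\Phi$ is a bijection.

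Finally I would check that $\Phi$ and $\Psi$ are continuous. The one point requiring care is to be explicit about the topology on $\hat{F_\ell}(\C)$: a labelled configuration is the same data as an injective function $\{1,\ldots,\ell\} \to \C$, and with the natural (pointwise-convergence) topology the space of such functions is exactly the subspace of $\C^\ell$ consisting of injective tuples, via the map ``record the positions in label order.'' Under this identification $\Phi$ and $\Psi$ become the identity map on that subspace, hence are continuous, and $\Phi$ is a homeomorphism. The ``hard part,'' such as it is, is simply pinning down this topology on the configuration space; there is no genuine topological obstruction once that bookkeeping is in place.
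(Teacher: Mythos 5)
Your proposal is correct and follows the same approach the paper itself sketches in the paragraph just before the statement: identify a point $z=(z_1,\ldots,z_\ell)$ of $M(\A_{\ell-1})$ with the labelled configuration placing label $k$ at $z_k$, and observe that this is a bijection. You go a bit further in pinning down the topology on $\hat{F_\ell}(\C)$ and checking continuity, which the paper leaves implicit, but the underlying argument is the same.
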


\end{paragraph}  

\end{chapter}


\begin{chapter}{Oriented Matroids}

The notion of oriented matroid is a natural mathematical concept which presents 
itself in many different forms.  Oriented matroids can thought of as discrete 
combinatorial abstractions of point configurations over the reals, of convex 
polytopes, of directed graphs, and of oriented hyperplane arrangements.  We will 
restrict our discussion of oriented matroids to the context of oriented 
hyperplane arrangements.

\bigskip

A characteristic feature of oriented matroids is the variety of different but 
equivalent axiom systems.  No single axiom system takes priority over the 
others.  We will describe oriented matroids from a covector point of view.  But 
before we describe oriented matroids and how they arise from oriented hyperplane 
arrangements, we will first introduce some necessary ideas concerning covectors 
and then list the covector axioms of an oriented matroid.

\bigskip

For our purposes, a \emph{covector} $X$ is simply an element of $\{+,-,0\}^n$.  
That is, a covector $X$ is an $n$-tuple where each component $X_i$ is a $+$, 
$-$, or $0$.  We will call a covector $T$ a \emph{tope} if $T_i \neq 0$ for all 
$i$.  The \emph{opposite} of a covector $X$ is $-X$ defined componentwise via  
\begin{equation*}
(-X)_i =
	\begin{cases}
	-&  \text{if $X_i = +$},\\
	+&  \text{if $X_i = -$},\\
	0&  \text{if $X_i = 0$}.
	\end{cases}
\end{equation*}
For example, if $X = ++0-$, then $-X = --0+$.  The \emph{composition} of two 
covectors $X$ and $Y$ is $X \circ Y$ defined componentwise via
\begin{equation*}
(X \circ Y)_i =
	\begin{cases}
	X_i&  \text{if $X_i \neq 0$},\\
	Y_i&  \text{if $X_i = 0$}.
	\end{cases}
\end{equation*}
Note that this composition is associative, but not commutative.  The 
\emph{separation set} of covectors $X$ and $Y$ is $S(X,Y) =\{i:  X_i = (-Y)_i 
\neq 0\}$.  Now, we are ready to list the covector axioms for an oriented 
matroid.

\begin{definition}
A set $\mathcal{L} \subseteq \{+,-,0\}^n$ is the set of covectors of an oriented 
matroid iff $\mathcal{L}$ satisfies:
\begin{itemize}
\item[(L0)]  $0 \in \mathcal{L}$,
\item[(L1)]  $X \in \mathcal{L}$ implies $-X \in \mathcal{L}$,
\item[(L2)]  $X,Y \in \mathcal{L}$ implies $X \circ Y \in \mathcal{L}$,
\item[(L3)]  if $X,Y \in \mathcal{L}$ and $i \in S(X,Y)$, then there exists $Z 	
\in \mathcal{L}$ such that $Z_i = 0$ and $Z_j = (X \circ Y)_j = (Y \circ X)_j$ 
for all $j \notin S(X,Y)$.
\end{itemize}
\end{definition}

We will often refer to a set of covectors $\mathcal{L}$ for an oriented matroid 
as the oriented matroid itself. 

\bigskip

The oriented matroid of a hyperplane arrangement arises in a very 
natural way.  Let $\A$ be a hyperplane arrangement in $\R^\ell$ 
consisting of hyperplanes $\{H_1, \ldots, H_n\}$.  To each face $F$ of $\A$ 
corresponds a covector $X$.  In this case, we will say that $X$ \emph{labels} 
$F$.  The oriented matroid associated with $\A$ consists of exactly these 
covectors.  We describe this correspondence below.  

\bigskip

Recall that each hyperplane $H_i$ is given by a linear function $\alpha_i$, so 
that 
	$$H_i=\{x \in \R^\ell: \alpha_i(x)=0\}$$
and 
	$${H_i}^+ =\{x \in \R^\ell: \alpha_i(x) > 0\}.$$  
The linear functions $\alpha_1, \ldots, \alpha_n$ determine the covectors 
corresponding to the faces of $\A$.  In fact, $\alpha_i$ determines the $i^{th}$ 
component of each covector.  Let $F$ be a face of $\A$ and let $x \in F$.  Then 
the covector $X$ corresponding to $F$ is given by
\begin{equation*}
X_i =
	\begin{cases}
	+&  \text{if $\alpha_i(x) > 0$},\\
	-&  \text{if $\alpha_i(x) < 0$},\\
	0&  \text{if $\alpha_i(x) = 0$}.
	\end{cases}
\end{equation*}

\bigskip

In the above definition, $X_i$ is independent of the point $x \in F$.  
However, $X_i$ is dependent upon the numbering of the hyperplanes.  A different 
numbering of the hyperplanes will give rise to a different set of covectors.  
Yet the set of corresponding covectors will be in one-to-one correspondence with 
the faces of $\A$, regardless of the numbering.  Let $\mathcal{L}(\A)$ denote 
the set of covectors for the oriented hyperplane arrangement $\A$.  Likewise, 
let $\mathcal{T}(\A)$ denote the set of topes of $\A$.  Note that the topes of 
$\A$ label exactly the chambers of $\A$, since each $\alpha_i$ is nonzero on a 
chamber.

\bigskip

There is a very convenient geometric interpretation of the composition of 
covectors for hyperplane arrangements.  Let $\mathcal{L}(\A)$ be the set of 
covectors for an oriented hyperplane arrangement $\A$.  Let $F$ and $F'$ be 
faces of $\A$.  Assume that the covectors $X$ and $X'$ label $F$ and $F'$, 
respectively.  Then we can interpret the covector composition of $X \circ X'$ 
geometrically as ``stand on $F$ and take one step towards $F'$."  That 
is, pick an arbitrary point $x \in F$, and then move a small distance $\epsilon$ 
from $x$ towards $F'$.  The face that you end up in is labelled by $X \circ X'$. 
  
\begin{theorem}\label{matroid}
If $\A$ is an oriented essential hyperplane arrangement, then $\mathcal{L}(\A)$ 
is the set of covectors for an oriented matroid.  That is, $\mathcal{L}(\A)$ is 
the oriented matroid for $\A$.
\end{theorem}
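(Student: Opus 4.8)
The plan is to verify directly that $\mathcal L(\A)$ satisfies the four covector axioms (L0)--(L3), using the geometric description of covectors in terms of the signs of the linear functionals $\alpha_1,\dots,\alpha_n$ and the ``stand on $F$ and step toward $F'$'' interpretation of composition. Throughout, a point of $\R^\ell$ in general position relative to $\A$ lies in a chamber and hence gives a tope, and every face $F$ is the set of points with a fixed sign vector, so covectors and faces of $\A$ correspond bijectively.

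First, (L0): the origin $\{0\}$ lies on every hyperplane $H_i$ (since each $H_i$ is linear), so $\alpha_i(0)=0$ for all $i$, which means the zero covector $0$ labels the face $\{0\}\in\mathcal F(\A)$; hence $0\in\mathcal L(\A)$. (Essentiality guarantees $\{0\}$ really is a face, namely the unique minimal one.) Next, (L1): if $X$ labels $F$ with witnessing point $x\in F$, then $-x$ has $\alpha_i(-x)=-\alpha_i(x)$ by linearity of each $\alpha_i$, so $-x$ lies in a face labelled by $-X$; thus $-X\in\mathcal L(\A)$. For (L2), take faces $F,F'$ labelled by $X,X'$, pick $x\in F$, and consider the point $x_\epsilon = x + \epsilon(x'-x)$ for small $\epsilon>0$ and some fixed $x'\in F'$. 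For each $i$: if $X_i\neq 0$ then $\alpha_i(x)\neq 0$, and for $\epsilon$ small enough $\alpha_i(x_\epsilon)$ has the same sign, so the $i$th sign of $x_\epsilon$ is $X_i$; if $X_i=0$ then $\alpha_i(x)=0$ and $\alpha_i(x_\epsilon)=\epsilon\,\alpha_i(x')$, which has sign $X'_i$. Choosing $\epsilon$ smaller than the finitely many thresholds coming from the indices with $X_i\neq 0$, the point $x_\epsilon$ lies in a face labelled exactly by $X\circ X'$, so $X\circ X'\in\mathcal L(\A)$.

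The main work is (L3), the elimination axiom. Given faces $F,F'$ labelled by $X,X'$ and an index $i\in S(X,X')$ (so $X_i$ and $X'_i$ are opposite and nonzero), I want a covector $Z\in\mathcal L(\A)$ with $Z_i=0$ and $Z_j=(X\circ X')_j=(X'\circ X)_j$ for all $j\notin S(X,X')$. The geometric idea: pick $x\in F$ and $x'\in F'$; on the segment from $x$ to $x'$ the functional $\alpha_i$ changes sign (since $\alpha_i(x)$ and $\alpha_i(x')$ are nonzero of opposite sign), so there is a point $z$ on the open segment with $\alpha_i(z)=0$, i.e.\ $z\in H_i$; let $Z$ be the covector of $z$. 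Then $Z_i=0$ automatically. For the remaining coordinates I need to choose $z$ carefully — among all points on the segment lying in $H_i$, or by first sliding $x$ slightly toward $x'$ and $x'$ slightly toward $x$ so that the only hyperplane we meet ``first'' from each side is controlled — to ensure that for $j\notin S(X,X')$ the sign $Z_j$ equals the common value $(X\circ X')_j$. The cleanest route is: for $j\notin S(X,X')$, $\alpha_j$ restricted to the segment $[x,x']$ is an affine function that does not change sign between the relevant endpoints except possibly vanishing, and $(X\circ X')_j$ is $X_j$ if $X_j\neq 0$ and $X'_j$ otherwise; by perturbing the endpoints within $F$ and $F'$ (using convexity of faces) one arranges $\alpha_j(z)$ to have exactly that sign, or one invokes that a generic point near the crossing of $H_i$ on a generic segment has this property. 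I would either give this perturbation argument or, more efficiently, cite that $\R^\ell$ with the sign-vector structure is the standard realizable example and quote the known fact (e.g.\ from the Björner et al.\ oriented matroid literature) that realizable covector systems satisfy (L3); but since the paper seems to want a self-contained treatment, the perturbation-along-a-segment argument is the one to flesh out.

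I expect (L3) to be the only real obstacle; (L0)--(L2) are immediate from linearity of the $\alpha_i$ and the step-toward interpretation already established in the text. The delicate point in (L3) is matching the coordinates $j\notin S(X,X')$ to the prescribed value $(X\circ X')_j$ simultaneously with forcing $Z_i=0$, which requires choosing the crossing point of $H_i$ to be generic with respect to all the other hyperplanes not separating $X$ from $X'$; a short general-position / small-perturbation argument within the (convex) faces $F$ and $F'$ handles this, and essentiality of $\A$ is not actually needed for (L3) itself but is used to identify $\mathcal L(\A)$ as a genuine oriented matroid with the expected rank.
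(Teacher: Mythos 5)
Your plan matches the paper's proof for (L0)--(L2), and you correctly identify the segment argument as the right mechanism for (L3), but you stop short of completing it because you believe a general-position or perturbation step is required to control the coordinates $j\notin S(X,Y)$. That worry is unfounded, and recognizing why closes the gap. Fix $x\in F$, $x'\in F'$, and parametrize the segment by $\gamma(t)=(1-t)x+tx'$ for $t\in[0,1]$. Each $\alpha_j\circ\gamma$ is affine in $t$. Since $i\in S(X,Y)$, $\alpha_i(x)$ and $\alpha_i(x')$ are nonzero of opposite sign, so $\alpha_i\circ\gamma$ vanishes at a \emph{unique} $t_0$, and that $t_0$ lies in the open interval $(0,1)$; set $z=\gamma(t_0)$ and let $Z$ be its covector, so $Z_i=0$ with no choice to make. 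Now take any $j\notin S(X,Y)$. Then $X_j$ and $Y_j$ are not opposite nonzero signs, so the affine function $\alpha_j\circ\gamma$ has endpoint values that are either both $\geq 0$ or both $\leq 0$ (in the weak sense). Three cases: if $X_j=Y_j=0$, it is identically zero, so $Z_j=0=(X\circ Y)_j=(Y\circ X)_j$; if exactly one of $X_j,Y_j$ is zero, the function vanishes at exactly one endpoint and is strictly of one sign on the open segment, so $Z_j$ (evaluated at the interior point $t_0$) equals the nonzero one of $X_j,Y_j$, which is precisely $(X\circ Y)_j=(Y\circ X)_j$; if both are nonzero and equal, the function has that constant sign on the whole segment, and again $Z_j=(X\circ Y)_j=(Y\circ X)_j$. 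No perturbation, genericity, or careful choice of crossing point is needed, because axiom (L3) imposes no condition on $Z_k$ for $k\in S(X,Y)\setminus\{i\}$, which are the only coordinates where ambiguity could arise. The paper's proof is exactly this segment argument stated informally (``the faces labelled by $X\circ Y$, $Y\circ X$, and $Z$ are all on the same side of $H_j$''); your instinct to justify it via a realizability or perturbation argument adds work that the convexity and affineness already do for free. Also, your aside that essentiality is ``not actually needed for (L3)'' is fine, but note the paper uses essentiality for (L0), so that $\{0\}$ is genuinely a face and the zero covector occurs.
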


\begin{proof}
Let $\mathcal{L}(\A)$ be the set of covectors for an oriented hyperplane 
arrangement $\A$.  We must show that $\mathcal{L}(\A)$ satisfies (L0)-(L3).  
Since $\{0\}$ is a face of $\A$, (L0) is clearly satisfied.  To see that (L1) is 
satisfied, assume that $X$ is in $\mathcal{L}(\A)$ and labels the face $F$ of 
$\A$.  Then $-X$ labels the face of $\A$ opposite $F$.  So, $-X$ is in 
$\mathcal{L}(\A)$ and hence (L1) is satisfied.  Also, (L2) is clearly satisfied, 
based upon our discussion of the geometric interpretation of covector 
composition.  Now, let $X,Y \in \mathcal{L}(\A)$ and let $i \in S(X,Y)$.  Assume 
that $X$ and $Y$ label the faces $F$ and $F'$, respectively.  Geometrically, $i 
\in S(X,Y)$ implies that the hyperplane $H_i$ separates the faces $F$ and $F'$.  
By separates, we mean that for any line segment $L$ joining any two arbitrary 
points $x(F)$ and $x(F')$ in $F$ and $F'$, respectively, $L$ passes through 
$H_i$.  Fix such a line segment $L$.  Let $G$ be the face of $H_i$ that $L$ 
passes through and let $Z$ be the covector that labels $G$.  Since $Z$ lies on 
$H_i$, $Z_i = 0$.  Now, let $j \notin S(X,Y)$.  Then $H_j$ does not separate $X$ 
and $Y$.  Clearly, the faces labelled by $X \circ Y$, $Y \circ X$, and $Z$ are 
all on the same side of $H_j$.  Thus, 
	$$Z_j = (X \circ Y)_j = (Y \circ X)_j.$$
So, (L3) is also satisfied. 
\end{proof}

\begin{example}\label{ex4.1}
Let $\A_2$ be oriented as in Figure \ref{covectors}.  Given the orientation on 
$\A_2$, the faces of $\A_2$ are labelled by their corresponding covectors.
\begin{figure}[h]
\begin{center}

\includegraphics[width=6cm]{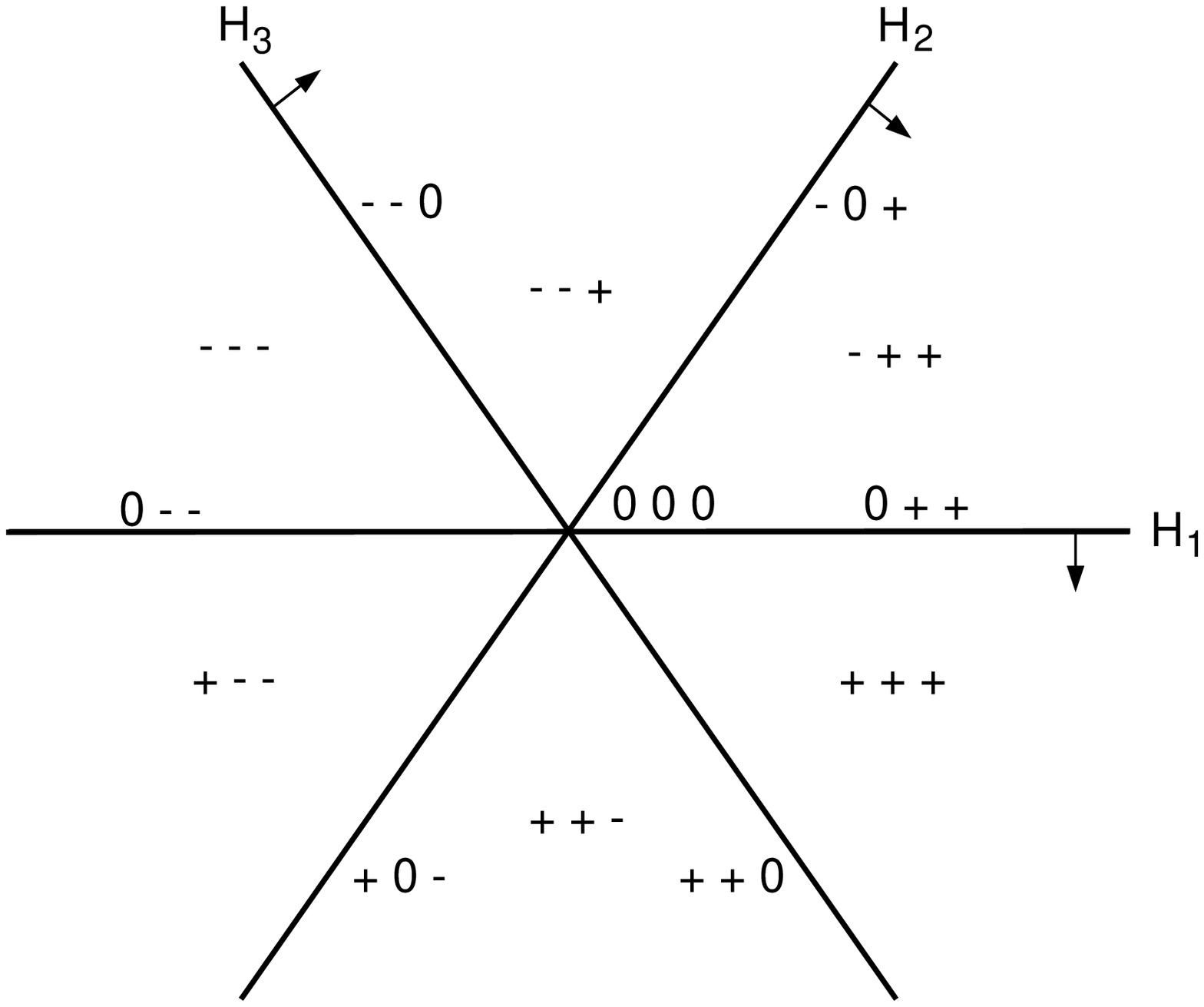}

\caption{Covectors of $\A_2$}
\label{covectors}
\end{center}
\end{figure}
\end{example}

We define the \emph{rank} of an oriented matroid $\mathcal{L}$ to be the length 
of the maximal chain in the poset of covectors.  If $\mathcal{L}(\A)$ is the 
oriented matroid of a hyperplane arrangement $\A$, then the rank of 
$\mathcal{L}(\A)$ is the codimension of $\bigcap_{H \in \A}H$.

\bigskip

If $\mathcal{L}$ is an oriented matroid, then we say that $\mathcal{L}$ is 
\emph{loop-free} if for every $i$, there exists at least one covector where the 
$i^{th}$ component is nonzero.  Clearly, if $\A$ is a hyperplane arrangement, 
then $\mathcal{L}(\A)$ is loop-free, since there is at least one ``nonzero" face 
on either side of each hyperplane in $\A$. 

\bigskip

Now, define the partial order ``$\leq$" on the set $\{+,-,0\}$ via $0<+$, $0<-$,  
with $+$ and $-$ incomparable.  See Figure \ref{po}.
\begin{figure}[h]
\begin{center}

\includegraphics[width=4cm]{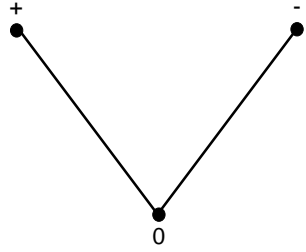}

\caption{The partial ordering on $\{+,-,0\}$}
\label{po}
\end{center}
\end{figure}
This induces a partial order on covectors in which covectors are compared 
componentwise.  That is, for covectors $X$ and $X'$
	$$X \leq X' \ \text{iff} \ X_i \leq {X'}_i\text{,}$$
for all $i$.  It is easy to check that this ordering on covectors is consistent 
with the ordering on faces.  That is, if $X$ and $X'$ are the covectors 
corresponding to $F$ and $F'$, respectively, 
	$$X \leq X' \ \text{iff} \ F \leq F'.$$  
Thus, $\mathcal{F}(\A)$ and $\mathcal{L}(\A)$ are isomorphic as posets.  

\begin{example}
Let $\A_2$ be oriented as in Example \ref{ex4.1}.  Then the covectors of $\A_2$, 
given this orientation, determine the poset for $\mathcal{L}(\A_2)$ in Figure 
\ref{coposet}. 
\begin{figure}[h]
\begin{center}

\includegraphics[width=8cm]{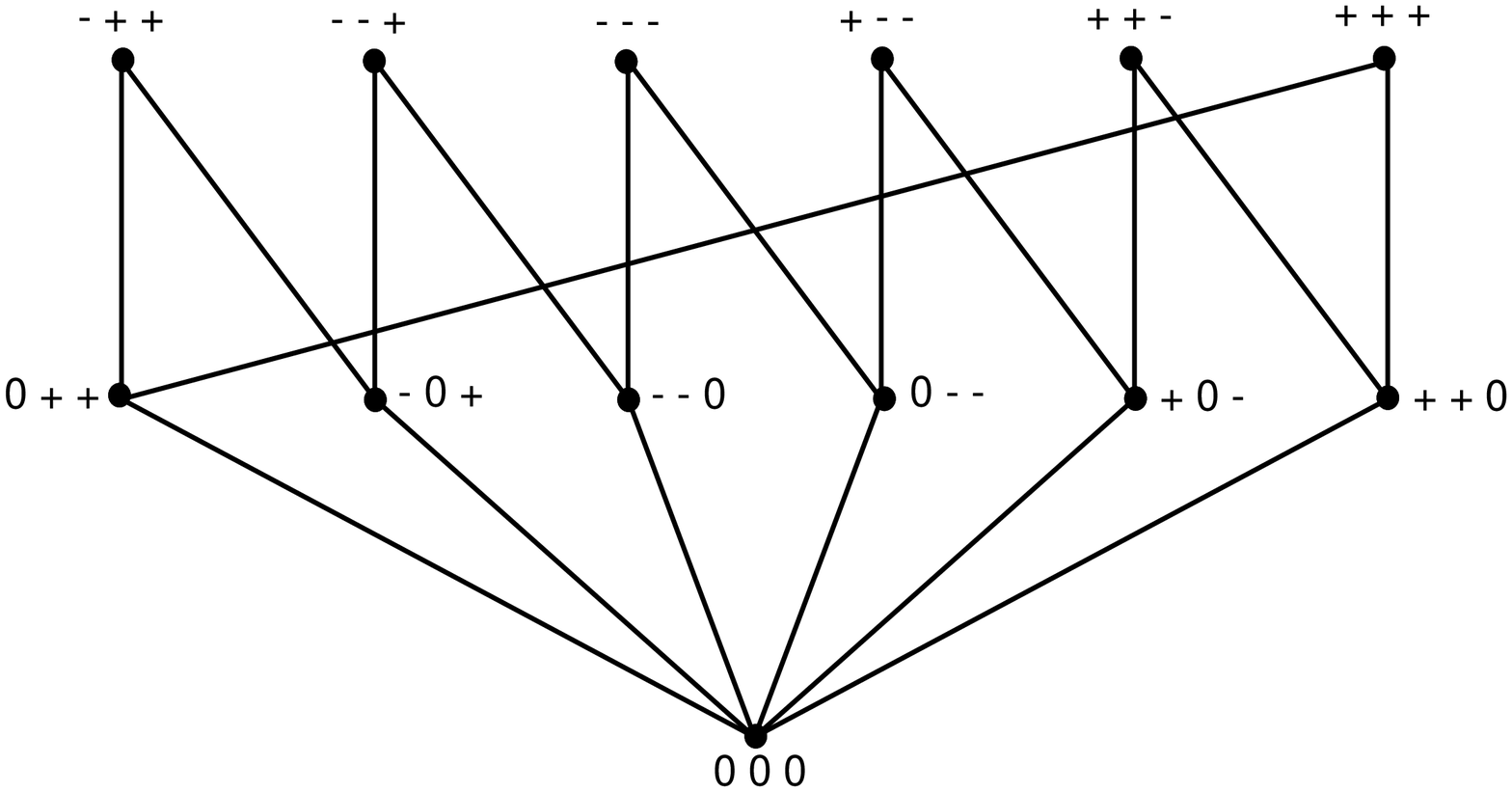}

\caption{Poset of the oriented matroid for $\A_2$}
\label{coposet}
\end{center}
\end{figure}
\end{example}

There is an almost automatic identification between the covectors of the 
oriented matroid of a hyperplane arrangement $\A$ and the faces of $\A$.  Since 
$\mathcal{F}(\A)$ and $\mathcal{L}(\A)$ are isomorphic as posets, it will be 
very convenient for us to use $F$ to refer to both the face of $\A$ and the 
covector which labels $F$.  Similarly, we will use $C$ to denote both a chamber 
of $\A$ as well as the tope that labels $C$.  We will stick to this convention 
for the remainder of the paper.

\bigskip

\begin{paragraph}{Arrangements of Pseudospheres}
Not every oriented matroid $\mathcal{L}$ arises from a hyperplane arrangement 
$\A$.  However, every oriented matroid can be realized as an arrangement of 
oriented pseudospheres.  A \emph{pseudosphere} $S$ is a locally flat 
PL-submanifold of $S^{\ell-1}$ which is homeomorphic to $S^{\ell-2}$.  $S$ 
divides $S^{\ell-1}$ into two components, $S^+$ and $S^-$, called 
\emph{pseudohemispheres}, each of which is homeomorphic to $B^{\ell-1}$.  

\bigskip

We define an \emph{arrangement of pseudospheres} $\mathcal{S}$ to be a finite 
set of pseudospheres $\{S_1, \ldots, S_n\}$ in $S^{\ell-1}$ such that
\begin{itemize}
\item[(S1)]  Every non-empty intersection $S_J = \bigcap_{i \in J}S_i$ is 
homeomorphic to a sphere of some dimension for every $J \subseteq \{1, \ldots, 
n\}$. 
\item[(S2)]  For every non-empty intersection $S_J$ and every $i \notin \{1, 
\ldots, n\}$, the intersection $S_J \cap S_i$ is a pseudosphere in $S_J$ with 
sides $S_J \cap {S_i}^+$ and $S_J \cap {S_i}^-$.   
\end{itemize}
If $S_{\{1, \ldots, n\}} = \emptyset$, then $\mathcal{S}$ is called 
\emph{essential}.  If a positive side and negative side of each pseudosphere has 
been chosen, then we say that $\mathcal{S}$ is an \emph{oriented arrangement of 
pseudospheres}.  That is, if ${S_i}^+$ and ${S_i}^-$ have been specified for 
each pseudosphere $S_i$, then $\mathcal{S}$ has been oriented.  By saying that 
an arrangement of pseudospheres $\mathcal{S}$ is \emph{centrally symmetric} we 
mean that each pseudosphere $S_i$ of $\mathcal{S}$ is invariant under the 
antipodal mapping.

\bigskip

We form the oriented matroid for an arrangement of pseudospheres 
$\mathcal{L}(\mathcal{S})$ in much the same way that we formed the oriented 
matroid for an arrangement of hyperplanes.  The pseudospheres subdivide 
$S^{\ell-1}$ into cells.  Let $\mathcal{F}(\mathcal{S})$ denote the set of cells 
of $\mathcal{S}$.  Each $F \in \mathcal{F}(\mathcal{S})$ can be labeled by a 
covector determined by the pseudohemispheres that $F$ lies in.  We are now 
prepared to understand the statement of the following theorem, called the 
Topological Representation Theorem, which is concerned with the realizability of 
an oriented matroid.  For a proof, see \cite{BJ}.

\begin{theorem}
Let $\mathcal{L} \subseteq \{+,-,0\}^n$.  Then $\mathcal{L}$ is the set of 
covectors of a loop-free oriented matroid of rank $\ell$ iff $\mathcal{L}$ is 
isomorphic to the face poset $\mathcal{F}(\mathcal{S})$ for some oriented 
arrangement of pseudospheres $\mathcal{S}$ in $S^{\ell-1}$, which is essential 
and centrally symmetric. \hfill $\Box$
\end{theorem}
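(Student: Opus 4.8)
The plan is to prove the two implications separately. The direction \emph{pseudosphere arrangement $\Rightarrow$ oriented matroid} is a direct verification of (L0)--(L3), modeled on the proof of Theorem~\ref{matroid} for hyperplane arrangements; the converse is the Folkman--Lawrence topological representation theorem, and this is where all the real work lies.

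For the easy direction, suppose $\mathcal{S} = \{S_1, \dots, S_n\}$ is an essential, centrally symmetric oriented arrangement of pseudospheres in $S^{\ell-1}$. Label each cell $F \in \mathcal{F}(\mathcal{S})$ by the covector whose $i$th entry records whether $F$ lies on $S_i$, in $S_i^+$, or in $S_i^-$. As for hyperplane arrangements, this label is constant on each cell and cell inclusion matches the componentwise order on $\{+,-,0\}^n$, so $\mathcal{F}(\mathcal{S})$ --- with a formal empty cell $\widehat 0$ adjoined, which is where essentiality and (L0) enter --- is poset-isomorphic onto a set $\mathcal{L} \subseteq \{+,-,0\}^n$. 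Axiom (L1) is immediate from central symmetry: the antipodal map fixes each $S_i$ setwise but exchanges $S_i^+$ with $S_i^-$, so it sends the cell labeled $X$ to the one labeled $-X$. For (L2) and (L3) I would transplant the ``stand on $F$ and take one step toward $F'$'' argument: given cells $F, F'$ with covectors $X, X'$, axiom (S2) lets one work inside the subsphere $S_J = \bigcap_{i \in J} S_i$ spanned by the relevant pseudospheres and argue, by downward induction on the codimension, that a sufficiently short path from a point of $F$ toward $F'$ first enters the cell with covector $X \circ X'$ (giving (L2)), and that for $i \in S(X, X')$ the pseudosphere $S_i$ crosses this path in a cell whose covector $Z$ has $Z_i = 0$ and agrees with $X \circ X'$ off $S(X, X')$ (giving (L3)). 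Axioms (S1) and (S2) are exactly what make these local arguments go through without any appeal to linearity.

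For the hard direction, induct on the rank $\ell$. Given a loop-free oriented matroid $\mathcal{L}$ on $\{1, \dots, n\}$ of rank $\ell \geq 1$, pick an element $n$ and form the contraction $\mathcal{L}/n$ (deleting any elements that contraction turns into loops, a routine reduction), a loop-free oriented matroid of rank $\ell - 1$. By the inductive hypothesis $\mathcal{L}/n$ is realized by an essential, centrally symmetric oriented pseudosphere arrangement $\mathcal{S}'$ in $S^{\ell-2}$. The idea is to regard $S^{\ell-2}$ as the equatorial pseudosphere $S_n$ inside $S^{\ell-1}$, with $\mathcal{S}'$ prescribing the arrangement that the remaining pseudospheres must induce on $S_n$, and then to \emph{inflate}: extend each pseudosphere of $\mathcal{S}'$ across the two open hemispheres bounded by $S_n$ to a locally flat PL pseudosphere of $S^{\ell-1}$, and adjoin $S_n$ itself. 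The data of $\mathcal{L}$ --- carried by the deletion $\mathcal{L} \setminus n$ together with the way covectors of $\mathcal{L}$ restrict along $S_n$ to those of $\mathcal{L}/n$ --- dictates precisely how each hemisphere must be filled in; performing the extensions antipodally keeps the result centrally symmetric, essentiality is arranged by the choice of $n$, and one then checks (S1), (S2), and that the face poset of the finished arrangement is isomorphic to $\mathcal{L}$ itself.

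The main obstacle is exactly this inflation step. It is far from automatic that a chosen realization $\mathcal{S}'$ of $\mathcal{L}/n$ extends to a realization of $\mathcal{L}$: the two hemispheres must be completed compatibly along the equator, the extended sets must remain honest pseudospheres meeting pairwise in spheres (so the PL and local-flatness bookkeeping is genuine content), and --- most delicately --- the global face poset must come out equal to $\mathcal{L}$ rather than merely to some oriented matroid with the prescribed contraction. Controlling all of this is the technical heart of Folkman--Lawrence and of the streamlined Edmonds--Mandel argument; I would route it through the theory of single-element extensions and localizations of oriented matroids. The equivariance needed for central symmetry adds bookkeeping but no new idea. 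The expedient adopted in the text --- quoting the result from \cite{BJ} --- of course sidesteps all of it.
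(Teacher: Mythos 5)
The paper does not prove this theorem: it is stated, attributed to \cite{BJ} with a terminal $\Box$, and no argument is given, so there is no in-text proof to compare against, as you yourself note at the end. Your forward direction (pseudospheres $\Rightarrow$ covector axioms) is reasonable in outline, with the caveat that the ``step from $F$ toward $F'$'' device for (L2)--(L3) has no straight line segment to ride along here; manufacturing an appropriate transverse path using only (S1)--(S2) is genuine work, not a verbatim transplant of the proof of Theorem~\ref{matroid}.

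The converse is where I think there is a real gap, not merely postponed bookkeeping. The proof in \cite{BJ} (Edmonds--Mandel) is not an induction on rank via contraction: it shows that the order complex of the proper part of the big face lattice $\hat{\mathcal{L}}$ is a shellable, hence PL, sphere, and then reads off each $S_i$ as the subcomplex of covectors whose $i$-th coordinate is zero. Your contraction/inflation scheme does not obviously reduce the problem. The inductive hypothesis realizes only $\mathcal{L}/n$ on the equator $S_n \cong S^{\ell-2}$, but filling the two hemispheres requires knowing how each $S_i$ $(i \neq n)$ sits \emph{inside} them --- data carried by $\mathcal{L}\setminus n$, an oriented matroid whose rank is still $\ell$, so the inductive hypothesis says nothing about it. Treating $\mathcal{L}\setminus n$ only as ``combinatorial guidance'' leaves the crux intact: one must still prove that the prescribed extensions can actually be realized by locally flat PL pseudospheres satisfying (S1)--(S2) whose face poset is exactly $\mathcal{L}$ --- which is the representation theorem at rank $\ell$ all over again. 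The single-element extension and localization machinery you invoke is built for adjoining one new pseudosphere to an already-realized arrangement of the same rank, not for lifting an equatorial arrangement up a dimension, so it does not obviously close this circle either. As written the induction is not well-founded, and I would not present this as a viable outline without first resolving how it is meant to terminate.
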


Note that if $\A$ is a hyperplane arrangement, then $\mathcal{F}(\A)$ is 
independent of both the numbering and the orientation of the hyperplanes of 
$\A$.  However, if we renumber or reorient the hyperplanes of $\A$, then we 
will end up with a different set of covectors for $\mathcal{L}(\A)$.  But sets 
of covectors which arise from renumbering or reorienting the hyperplanes of $\A$ 
are isomorphic as posets. This is clear from the Topological Representation 
Theorem. 

\end{paragraph}

\end{chapter}


\begin{chapter}{The Salvetti Complex}\label{chap5}

Let $\A$ be an essential arrangement of hyperplanes in $\R^\ell$.  Throughout 
the remainder of this paper, we will assume that $S^{\ell-1}$ and $B^\ell$ are 
provided with the simplicial decomposition described in Chapter \ref{chap2}.  The 
following definition was introduced by W. Arvola in \cite{AR}.

\begin{definition}
We define the abstract \emph{Salvetti complex} of a hyperplane arrangement $\A$, 
denoted $\Sal(\A)$, via 
$$\Sal(\A) = \{(F,C): F \in \mathcal{L}(\A), C \in \mathcal{T}(\A), F \leq 
C\}.$$ 
\end{definition}

Recall that we use the symbol $F$ not only to denote a covector, but also the 
face of the arrangement that $F$ labels.  Similarly, we use $C$ to denote both a 
tope and the chamber that $C$ labels.  So, the pair $(F,C)$ represents a pair of 
covectors and a pair of faces.  We will only make a distiction when the context 
is unclear.

\begin{example}\label{ex5.1}
Let $\A$ be the oriented hyperplane arrangement given in Figure 
\ref{covectors2}.  The faces of the arrangement are labelled with their 
corresponding covector, given the numbering and orientation of the hyperplanes.  
\begin{figure}[h]
\begin{center}

\includegraphics[width=6cm]{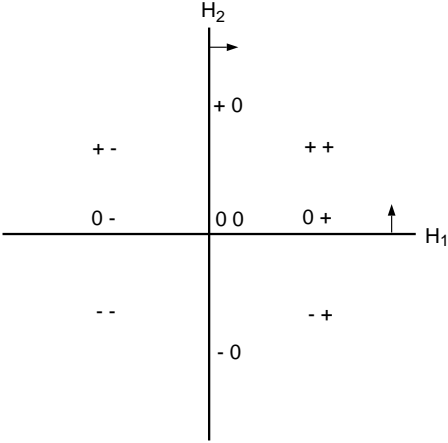}

\caption{Covectors of a hyperplane arrangement}
\label{covectors2}
\end{center}
\end{figure}
Then $\Sal(\A)$ consists of the elements\\

\bigskip

\begin{center}
$\begin{array}{llll}
(++,++) & (-+,-+) & (--,--) & (+-,+-)\\
(0+,++) & (0+,-+) & (-0,--) & (0-,+-)\\
(+0,++) & (-0,-+) & (0-,--) & (+0,+-)\\
(00,++) & (00,-+) & (00,--) & (00,+-)
\end{array}$
\end{center}
\end{example}

If $\A$ is a real hyperplane arrangement, then we define a partial order on 
$\Sal(\A)$ via
$$(F',C') \leq (F,C) \ \text{iff} \ F \leq F' \ \text{and} \ F' \circ C = C'.$$

Recall that covector composition is defined componentwise via
\begin{equation*}
(F' \circ C)_j =
	\begin{cases}
	(F')_j&  \text{if $(F')_j \neq 0$},\\
	(C)_j&  \text{if $(F')_j = 0$}.
	\end{cases}
\end{equation*}

For the remainder of this paper, we assume that $\Sal(\A)$ is provided with the 
partial ordering described above.  That is, $\Sal(\A)$ is a poset consisting of 
pairs $(F,C)$, where $F \in \mathcal{F}(\A)$, $C \in \mathcal{C}(\A)$, and $F 
\leq C$.  We may sometimes refer to a pair $(F,C)$ of $\Sal(\A)$ as a face.

\begin{example}
Let $\Sal(\A)$ consist of those elements listed in Example \ref{ex5.1}.  The 
poset for $\Sal(\A)$ is given in Figure \ref{salvetti1}.  Given this poset, we 
can realize $\Sal(\A)$ as the torus of Figure \ref{salvetti2}.
\begin{figure}[h]
\begin{center}

\includegraphics[width=9cm]{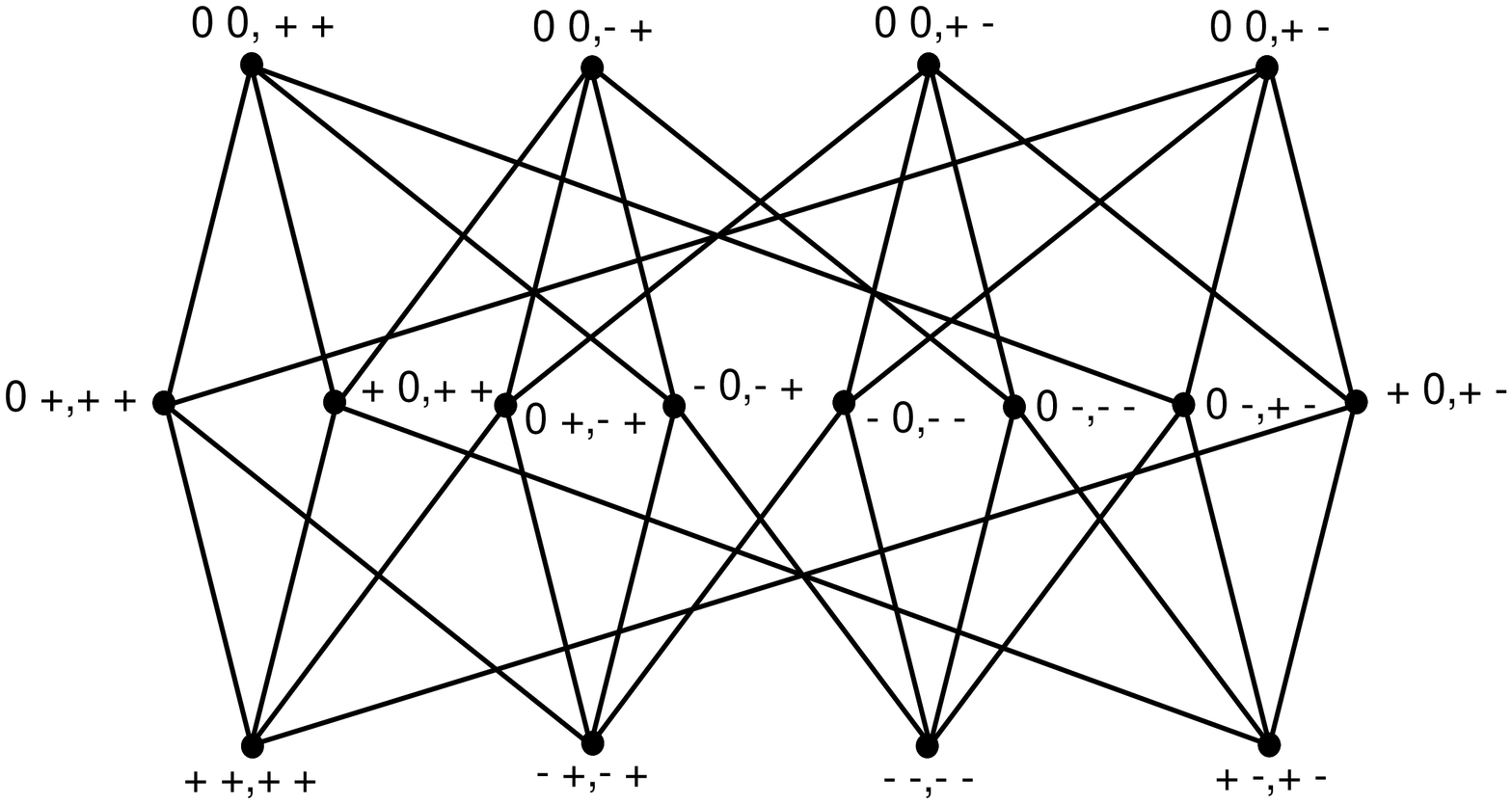}

\caption{Poset for $\Sal(\A)$}
\label{salvetti1}
\end{center}
\end{figure}

\begin{figure}[h]
\begin{center}

\includegraphics[width=9cm]{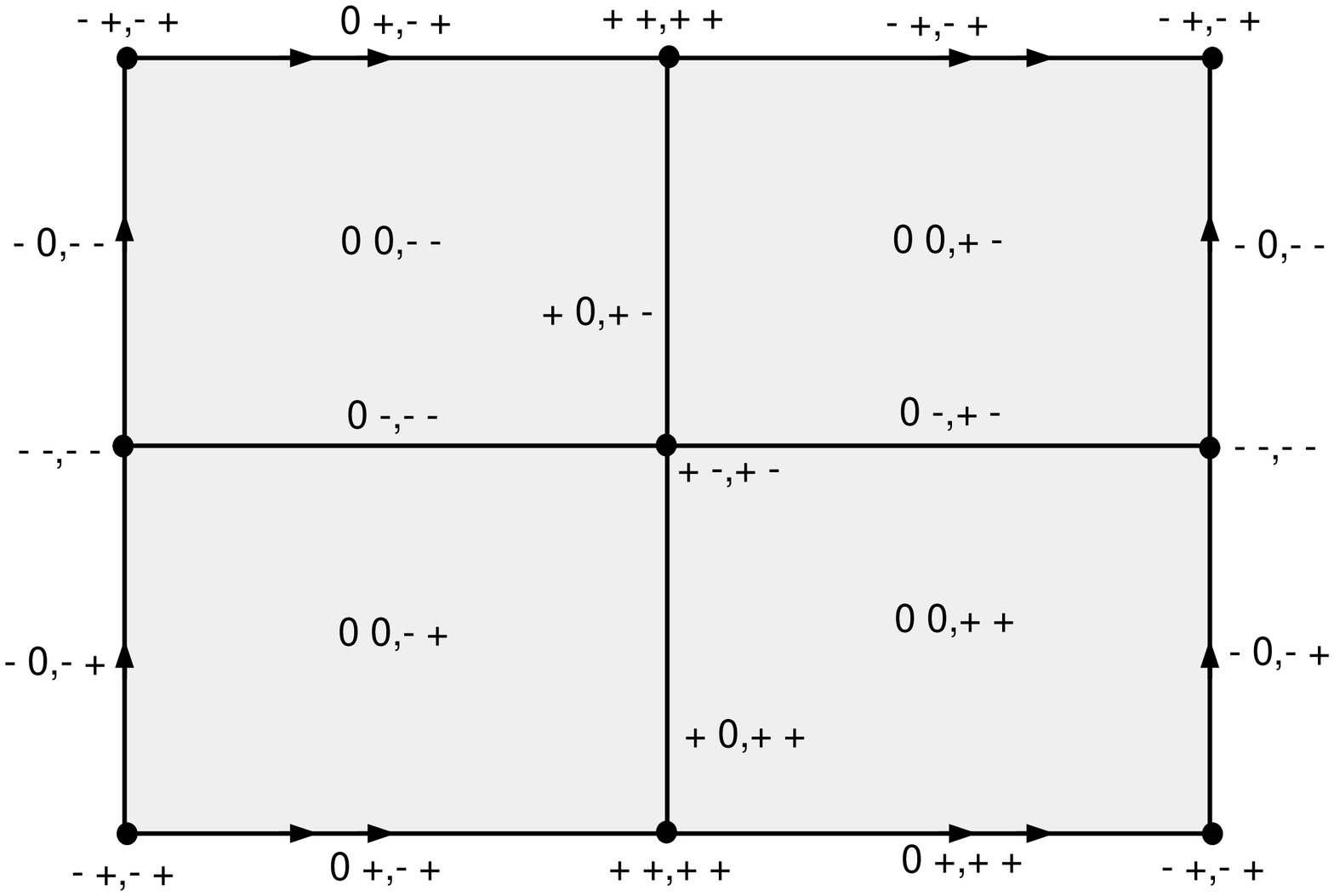}

\caption{A realization of $\Sal(\A)$}
\label{salvetti2}
\end{center}
\end{figure}
\end{example}

The face poset for $\Sal(\A)$ determines an abstract simplicial complex, by 
taking the barycentric subdivision.  A chain 
	$$\omega_0 < \cdots < \omega_n$$
of faces of $\Sal(\A)$ determines an $n$-simplex 
	$$\Omega = (\omega_0 < \cdots < \omega_n).$$ 
We will use $\Sal'(\A)$ to denote the abstract simplicial complex which results 
from this construction.  We will think of each element $\omega \in \Sal(\A)$ as 
a vertex of the simplicial complex, $\Sal'(\A)$.  Also, every $\Omega$ of 
$\Sal'(\A)$ represents both a chain of faces of $\Sal(\A)$ and a simplex.  

\bigskip

If 
	$$\Omega = (\omega_0 < \cdots < \omega_n)$$
and 
	$$\Omega' = (\omega_0' < \cdots < \omega_r')$$
are simplices of $\Sal'(\A)$,then $\Omega \leq \Omega'$ iff 
	$$\omega_0 < \cdots < \omega_n$$
is a subchain of 
	$$\omega_0' < \cdots < \omega_r'.$$   

We can realize the abstract simplicial complex $\Sal'(\A)$ in $\C^\ell$ by 
forming the \emph{concrete simplicial Salvetti complex}, denoted $|\Sal'(\A)|$.  
Recall that $\A$ determines a simplicial decomposition of $B^\ell$.  Now, for 
every pair $(F,C)$ in $\Sal(\A)$ we set 
	$$z(F,C) = x(F) + ix(C).$$
Note that since $C$ is a chamber of $\A$, $x(C) \notin H$ for each hyperplane 
$H$ of $\A$.  It follows that $z(F,C) \notin H+iH$ for any $H$.  Thus, $z(F,C) 
\in M(\A).$

\begin{lemma}\label{inequality}
Let $(F,C), (F',C') \in \Sal(\A)$ with $(F,C) < (F',C')$.  Then $F' < F$.
\end{lemma}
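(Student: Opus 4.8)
The statement is essentially a bookkeeping consequence of the definition of the partial order on $\Sal(\A)$, and the plan is to unwind that definition carefully, being mindful that the order on the face-coordinate is \emph{reversed}. First I would rewrite the hypothesis: by definition $(F,C) \leq (F',C')$ means $F' \leq F$ and $F \circ C' = C$, and the strict inequality $(F,C) < (F',C')$ adds $(F,C) \neq (F',C')$. So immediately I already have $F' \leq F$; the entire content of the lemma is to exclude the equality $F = F'$.

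The key auxiliary observation is that whenever $G \leq D$ in the face/covector poset one has $G \circ D = D$. This is a one-line componentwise check against the definitions: if $G_i = 0$ then $(G\circ D)_i = D_i$, and if $G_i \neq 0$ then $G_i \leq D_i$ in $\{+,-,0\}$ forces $G_i = D_i$, so again $(G\circ D)_i = D_i$. Applying this to $F' \leq C'$ (which holds because $(F',C') \in \Sal(\A)$) gives $F' \circ C' = C'$.

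Now I finish by contradiction. Suppose $F = F'$. Then the relation $F \circ C' = C$ becomes $F' \circ C' = C$, and combining with $F' \circ C' = C'$ from the previous step yields $C = C'$. Hence $(F,C) = (F',C')$, contradicting the strictness assumption. Therefore $F \neq F'$, and together with $F' \leq F$ this gives $F' < F$, as desired.

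There is no real obstacle here; the only point requiring care is keeping the two uses of ``$\leq$'' straight — the order on $\Sal(\A)$ and the order on $\mathcal{F}(\A)$ — since the former reverses the latter on the first coordinate. The componentwise identity $G \leq D \Rightarrow G \circ D = D$ is the small lemma that makes the argument go through, and it is exactly the statement that was already noted implicitly when the paper observed that $\mathcal{F}(\A)$ and $\mathcal{L}(\A)$ are isomorphic as posets.
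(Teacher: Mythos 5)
Your proof is correct and follows exactly the same route as the paper's: assume $F = F'$, use $F' \leq C'$ to get $F' \circ C' = C'$, combine with $F \circ C' = C$ to conclude $C = C'$, and hence reach a contradiction with strictness. The only difference is that you spell out the componentwise verification of $G \leq D \Rightarrow G \circ D = D$, which the paper leaves implicit with the phrase ``since $F' \leq C'$''.
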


\begin{proof}
Assume that $F = F'$.  Then $F \circ C' = F' \circ C' = C'$ since $F' \leq C'$.  
But $F \circ C' = C$, since $(F,C) < (F',C')$.  So, $C = C'$.  This implies that 
$(F,C) = (F',C')$, which is a contradiction.  Therefore, $F' < F$.
\end{proof}

\begin{lemma}
If $\Omega = (\omega_0 < \cdots < \omega_n)$ is a simplex of $\Sal'(\A)$, then 
the points $z(\omega_0), \ldots, z(\omega_n)$ are geometrically independent.
\end{lemma}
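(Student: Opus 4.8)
The plan is to reduce geometric independence in $\C^\ell$ — viewed as the real vector space of dimension $2\ell$ — to geometric independence of the real parts of the points $z(\omega_k)$, which are already known to be vertices of a simplex of the barycentric subdivision $B_\A'$ of $B^\ell$.

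First I would write $\omega_k = (F_k, C_k)$ for $k = 0, \ldots, n$, so that $z(\omega_k) = x(F_k) + i\,x(C_k)$. Applying Lemma \ref{inequality} to each consecutive pair $\omega_{k} < \omega_{k+1}$ and using transitivity, I obtain a \emph{strictly} decreasing chain of faces $F_n < F_{n-1} < \cdots < F_0$ in $\mathcal{F}(\A)$ (allowing $F_n = \{0\}$). In particular these are $n+1$ distinct faces forming a chain, so by the construction of the simplicial decomposition $B_\A'$ in Chapter \ref{chap2}, the points $x(F_0), \ldots, x(F_n)$ are the geometrically independent vertices of a simplex of $B_\A'$; that is, the vectors $x(F_k) - x(F_0)$ for $k = 1, \ldots, n$ are $\R$-linearly independent in $\R^\ell$. (If one wants this completely self-contained rather than quoted from Chapter \ref{chap2}, it follows directly from $x(F_k) \in F_k$ together with $\dim F_0 < \cdots < \dim F_n$.)

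Next I would suppose $\sum_{k=1}^n \lambda_k\,(z(\omega_k) - z(\omega_0)) = 0$ for reals $\lambda_1, \ldots, \lambda_n$ and apply the real-part projection $\C^\ell \to \R^\ell$, $a + ib \mapsto a$, which is $\R$-linear. Since $z(\omega_k) - z(\omega_0) = (x(F_k) - x(F_0)) + i(x(C_k) - x(C_0))$, this yields $\sum_{k=1}^n \lambda_k\,(x(F_k) - x(F_0)) = 0$, and the independence established in the previous step forces every $\lambda_k = 0$. Hence $z(\omega_0), \ldots, z(\omega_n)$ are geometrically independent.

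The only point requiring care — the main obstacle, though a mild one — is ensuring that the faces $F_0, \ldots, F_n$ really are distinct and really do form a chain of faces of $\A$, so that they index a genuine simplex of $B_\A'$ rather than a degenerate configuration; this is exactly what the strictness in Lemma \ref{inequality} supplies, together with the fact that $B_\A'$ (as opposed to $S_\A'$) was set up to include the vertex $x(\{0\}) = 0$, so chains passing through the zero face are still handled. Note that the imaginary coordinates $x(C_k)$ are not needed at all in the argument.
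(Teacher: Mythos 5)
Your argument is correct and follows essentially the same route as the paper's: use Lemma \ref{inequality} to get a strict chain $F_n < \cdots < F_0$, deduce that the barycenters $x(F_k)$ are geometrically independent, and conclude independence of the $z(\omega_k)$ by projecting onto the real parts (a step the paper leaves implicit). The only differences are cosmetic --- the paper re-derives the independence of the $x(F_k)$ inline by a backwards induction on supports rather than citing the $B_\A'$ construction --- and one small typo: your parenthetical dimension inequality should read $\dim F_n < \cdots < \dim F_0$ to match the decreasing face chain.
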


\begin{proof}
Let $(\omega_k < \cdots < \omega_n)$ be a simplex of $\Sal'(\A)$.  Set $\omega_j 
= (F_j,C_j)$.  Then each $z(\omega_j) = x(F_j)+ix(C_j)$.  Assume that 
$x(F_{k+1}), \ldots, x(F_n)$ are geometrically independent.  Then 
$z(\omega_{k+1}), \ldots, z(\omega_n)$ are geometrically independent.  Also, 
$x(F_{k+1}), \ldots, x(F_n)$ lie in the support of $F_{k+1}$, since $F_n \leq 
\cdots \leq F_{k+1}$.  By Lemma \ref{inequality}, $F_{k+1} < F_k$.  This implies 
that $x(F_k)$ does not lie in the support of $F_{k+1}$.  So, $x(F_k), \ldots, 
x(F_n)$ are geometrically independent.  This implies that $z(\omega_k), \ldots, 
z(\omega_n)$ are geometrically independent.  This induction shows that 
$z(\omega_0), \ldots, z(\omega_n)$ are geometrically independent.   
\end{proof}

The concrete Salvetti complex $|\Sal'(\A)|$ is a simplicial complex having 
$\{z(\omega): \omega \in \Sal(\A)\}$ as its set of vertices.  The vertices 
$z(\omega_0), \ldots, z(\omega_n)$ of $|\Sal'(\A)|$ form an $n$-simplex of 
$|\Sal'(\A)|$ exactly when $\omega_0, \cdots, \omega_n$ are the elements of a 
chain $\Omega$.  If 
	$$\Omega = (\omega_0 < \cdots < \omega_n)$$
is a simplex of $\Sal'(\A)$, then we will use $z(\Omega)$ to denote 
the simplex 
	$$z(\omega_0) \vee \cdots \vee z(\omega_n)$$
of $|\Sal'(\A)|$.

\bigskip

Recall from Chapter \ref{chap2}, that we define the cone of a cell $\phi$ of $S^{\ell-1}$ 
to be
	$$K(\phi) = \{\lambda x: x \in \phi \ \text{and} \ \lambda > 0\}.$$
For ease of notation, we will denote the cone of the star of a vertex $v$ of 
$S^{\ell-1}$ by $\hat{K}(v)$.  That is, $\hat{K}(v) = K(Star(v))$.

\begin{example}
The shaded region of Figure \ref{cone1} shows the cone of the star of the vertex 
$v_0$ on $S^1$.
\begin{figure}[h]
\begin{center}

\includegraphics[width=6cm]{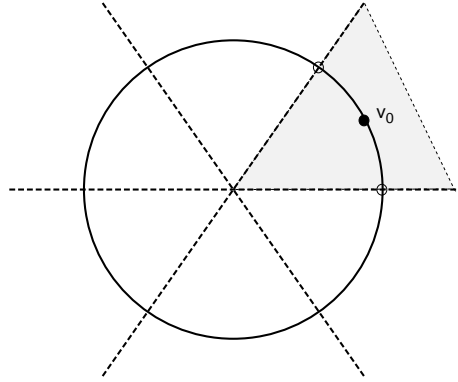}

\caption{The cone of the star of a vertex}
\label{cone1}
\end{center}
\end{figure}
\end{example}

Now, we will list a series of lemmas which will be used to prove that 
$|\Sal'(\A)|$ has the same homotopy type as the complement $M(\A)$.

\begin{lemma}\label{subset}
Let $(F,C) \in \Sal(\A)$ for some essential hyperplane arrangement $\A$.  Then
	$$F+iC_F \subseteq M(\A).$$
\end{lemma}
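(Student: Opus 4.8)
The plan is to unwind the definitions and verify the inclusion pointwise. Recall from Chapter \ref{chap2} that a point $x+iy \in \C^\ell$ lies in $M(\A)$ precisely when there is \emph{no} hyperplane $H \in \A$ with simultaneously $x \in H$ and $y \in H$. So I would fix an arbitrary point $z = x+iy \in F+iC_F$, meaning $x \in F$ and $y \in C_F$, and show that for every $H \in \A$ at least one of $x \notin H$ or $y \notin H$ holds; this gives $z \notin H+iH$ for all $H$, hence $z \in M(\A)$, and since $z$ was arbitrary the inclusion $F+iC_F \subseteq M(\A)$ follows.

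The verification splits into two cases according to whether $H$ belongs to the subarrangement $\A_F = \{H \in \A : F \subseteq H\}$. If $H \in \A_F$, then $x \in F \subseteq H$, so $x \in H$; on the other hand $C_F$ is, by definition, the chamber of $\A_F$ containing $C$, and a chamber is a codimension-$0$ face, i.e.\ a connected component of $\R^\ell \setminus \bigcup_{H' \in \A_F} H'$, so $y \in C_F$ forces $y \notin H$. Thus $z \notin H+iH$. If instead $H \notin \A_F$, then $F \not\subseteq H$; since $F$ is a face of $\A$, for each hyperplane of $\A$ the set $F$ lies entirely inside that hyperplane, its positive side, or its negative side, and because $F \not\subseteq H$ it must lie in $H^+$ or $H^-$, both of which are disjoint from $H$. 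Hence $x \in F$ gives $x \notin H$, and again $z \notin H+iH$.

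Combining the two cases shows $z \notin \bigcup_{H \in \A}(H+iH)$, so $z \in M(\A)$, completing the argument. I do not anticipate a genuine obstacle here: the only point that needs care is recalling that ``chamber of $\A_F$'' means a component of the complement of the hyperplanes in $\A_F$, so that $C_F$ really does avoid every $H \in \A_F$; everything else is a direct application of the definitions of $M(\A)$, of a face of an arrangement, and of the subarrangement $\A_F$. Note also that the hypothesis $(F,C) \in \Sal(\A)$ (in particular $F \leq C$) is used only implicitly, to guarantee that $F$ is a face and $C$ a chamber so that $C_F$ is well defined; essentiality of $\A$ is not needed for this lemma either, though it is the standing assumption.
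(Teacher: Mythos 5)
Your proof is correct and takes the same route as the paper's: both reduce to the observation that $y \in C_F$ avoids every hyperplane in $\A_F$, while $x \in F$ avoids every hyperplane not in $\A_F$, so no $H$ contains both. The paper compresses this into two sentences; you have simply spelled out the implicit case split on whether $H \in \A_F$, and your closing remarks about essentiality being unnecessary and $(F,C) \in \Sal(\A)$ entering only to make $C_F$ well-defined are accurate.
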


\begin{proof}
Let $z = x+iy \in F+iC_F$.  Then $x \in F$ and $y \in C_F$.  Since $C_F$ is the 
unique chamber of $\A_F$ which contains $C$, $y$ is not contained in any of the 
hyperplanes that $F$ may be contained in.  Hence $x+iy \notin H+iH$ for every 
hyperplane $H$ of $\A$.  So, $z \in M(\A)$.  Therefore, 
	$$F+iC_F \subseteq M(\A).$$ 
\end{proof}

\begin{lemma}\label{nugget}
Let 
	$$\phi = x(F_0) \vee \cdots \vee x(F_n)$$ 
be a simplex of $S^{\ell-1}$ with 
	$$\{0\} \neq F_0 \leq \cdots \leq F_n.$$  
Then $K(\phi) \subseteq \hat{K}(x(F_i))$ for all $i$.
\end{lemma}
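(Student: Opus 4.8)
The plan is to observe that this statement is essentially an unwinding of the definitions of the star of a vertex and of the cone operation $K(\cdot)$, together with the monotonicity of $K$ with respect to inclusion. The one point that must be kept straight is that, in the notation of Chapter \ref{chap2}, $\phi = x(F_0) \vee \cdots \vee x(F_n)$ denotes the \emph{open} simplex determined by the chain $\{0\} \neq F_0 \leq \cdots \leq F_n$, that is, the interior $\Int(\sigma)$ of the closed simplex $\sigma$ with vertices $x(F_0), \ldots, x(F_n)$ in the simplicial decomposition $S_\A'$ of $S^{\ell-1}$.

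First I would fix an index $i \in \{0, \ldots, n\}$ and note that, by construction of $S_\A'$, the point $x(F_i)$ is one of the vertices of the closed simplex $\sigma = \bar{\phi}$. By the definition of $\Star(x(F_i))$ as the union of the interiors of those simplices of $S_\A'$ that have $x(F_i)$ as a vertex, it follows immediately that
$$\phi = \Int(\sigma) \subseteq \Star(x(F_i)).$$

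Next I would record the elementary fact that $K(\cdot)$ preserves inclusions: if $A \subseteq B \subseteq S^{\ell-1}$, then $K(A) = \{\lambda x : x \in A,\ \lambda > 0\} \subseteq \{\lambda x : x \in B,\ \lambda > 0\} = K(B)$, directly from the definition of the cone. Applying this with $A = \phi$ and $B = \Star(x(F_i))$ yields
$$K(\phi) \subseteq K(\Star(x(F_i))) = \hat{K}(x(F_i)),$$
which is precisely the asserted inclusion, and since $i$ was arbitrary it holds for all $i$.

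I do not anticipate a genuine obstacle here; the proof is a short definition-chase. The only place requiring care is the insistence that $\phi$ is the open simplex rather than its closure: for the closed simplex the inclusion fails in general, since a face of $\bar{\phi}$ opposite $x(F_i)$ need not meet $\Star(x(F_i))$. So invoking the open-simplex convention of Chapter \ref{chap2} (and reading the chain as giving an honest simplex, so that $x(F_i)$ is literally among its vertices) is the essential point to state explicitly.
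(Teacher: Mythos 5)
Your proof is correct and follows exactly the paper's own argument: $\phi \subseteq \Star(x(F_i))$ because $x(F_i)$ is a vertex of $\phi$, and then apply monotonicity of $K(\cdot)$. The paper states this in two sentences; your write-up merely spells out the definition-chase (including the open-simplex convention), which the paper leaves implicit.
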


\begin{proof}
Since each $x(F_i)$ is a vertex of $\phi$, $\phi \subseteq \Star(x(F_i))$ for 
each $i$.  Hence, $K(\phi) \subseteq \hat{K}(x(F_i))$ for each $i$.
\end{proof}

\begin{lemma}\label{beef}
Let $F \in \mathcal{F}(\A)$ and let $D$ be a chamber of $\A_F$.  Then there 
exists a unique chamber $C$ of $\A$ such that $C_F = D$ and $F \leq C$.
\end{lemma}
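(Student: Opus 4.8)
The plan is to establish uniqueness and existence separately, both essentially by tracking signs of the defining functionals. Write $Z=\{i : F_i=0\}$ for the zero set of the covector $F$, so that $\A_F=\{H_i : i\in Z\}$ and a chamber of $\A_F$ is determined precisely by the signs of the $\alpha_i$, $i\in Z$ (these being nonzero on it).

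For uniqueness I would argue as follows. If $C$ is any chamber of $\A$ with $F\le C$, then for each $j\notin Z$ the relation $F_j\le C_j$ in $\{+,-,0\}$ together with $F_j\ne 0$ forces $C_j=F_j$, while $C_F$ is by definition the chamber of $\A_F$ containing $C$, so the covector of $C_F$ agrees with that of $C$ on the coordinates indexed by $Z$. Hence if $C$ and $C'$ both satisfy $F\le C,C'$ and $C_F=C'_F=D$, then $C$ and $C'$ have the same covector, and since the faces of $\A$ are in bijection with their covectors, $C=C'$.

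For existence I would use the geometric reading of composition (``stand on $F$ and step toward $D$''). Fix $x_0$ in the relatively open face $F$ and $y_0$ in the chamber $D$ of $\A_F$, and consider $z_\epsilon=x_0+\epsilon(y_0-x_0)$. For $i\in Z$ we have $\alpha_i(x_0)=0$, so $\alpha_i(z_\epsilon)=\epsilon\,\alpha_i(y_0)$ has the (nonzero) sign $D_i$ for every $\epsilon>0$; for $j\notin Z$ we have $\alpha_j(x_0)\ne 0$, so for $\epsilon$ below a threshold depending on $j$ the sign of $\alpha_j(z_\epsilon)$ equals that of $\alpha_j(x_0)$, namely $F_j$. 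Taking $\epsilon$ less than the finitely many such thresholds, $z_\epsilon$ lies on no hyperplane of $\A$, hence in a chamber $C$ of $\A$ whose covector has $k$-th entry $F_k$ for $k\notin Z$ and $D_k$ for $k\in Z$. Then $F\le C$ coordinatewise, and since $C$ and $D$ have the same signs on $Z$ we get $C_F=D$, as required.

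The only point needing a little care is the bookkeeping about $C_F$: that the covector of the chamber of $\A_F$ containing a chamber $C$ of $\A$ is exactly the restriction of the covector of $C$ to $Z$, and that this restricted sign vector recovers the chamber of $\A_F$. Both are immediate once one observes that the $\alpha_i$ with $i\in Z$ are precisely the defining functionals of $\A_F$ and are nonvanishing on its chambers, so I anticipate no genuine obstacle; the whole argument is short. (A more geometric alternative for uniqueness: restricting to a small ball about $x_0$ on which $\A$ agrees with $\A_F$ sets up a bijection $C\mapsto C_F$ between the chambers of $\A$ having $F$ in their closure and the chambers of $\A_F$; but the covector version is cleaner.)
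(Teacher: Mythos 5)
Your proof is correct. Uniqueness is argued exactly as in the paper: $F\le C$ forces $C_j=F_j$ off the zero set $Z$ of $F$, while $C_F=D$ forces $C_j=D_j$ on $Z$, so the covector of $C$ is determined. For existence the two arguments differ only in packaging. The paper starts from an arbitrary chamber $C_0$ with $(C_0)_F=D$, writes down the forced sign vector, observes it equals $F\circ C_0$, and cites axiom (L2) via Theorem~\ref{matroid} to conclude this is a covector of $\A$ (and clearly a tope). You instead produce the chamber directly as the one containing $x_0+\epsilon(y_0-x_0)$ for $x_0\in F$, $y_0\in D$, and $\epsilon>0$ small, reading off the signs of the $\alpha_j$ by linearity. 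The perturbation you use is precisely the geometric content behind covector composition --- the ``stand on $F$ and step toward $D$'' picture the paper invokes when justifying (L2) in the proof of Theorem~\ref{matroid} --- so your version is a self-contained unpacking of the same idea; it has the minor advantage of not appealing to the oriented-matroid axioms, at the cost of being a little longer.
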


\begin{proof}
Let $F \in \mathcal{F}(\A)$ and let $D$ be a chamber of $\A_F$.  Clearly, there 
exists at least one chamber $C_0$ of $\A$ such that $(C_0)_F = D$.  Since $D$ is 
a chamber of $\A_F$, $D$ is a partial covector having one entry for each 
hyperplane $H_j$ which contains the face $F$.  That is, $D_j$ exists only when 
$F_j = 0$.  Since $(C_0)_F = D$, $(C_0)_j = D_j$ when $F_j = 0$.  Furthermore, 
if $C$ is any chamber with $C_F = D$, then $C_j = D_j$ when $F_j = 0$.  Also, if 
$F \leq C$, then $C_j = F_j$ whenever $F_j \neq 0$.  Thus, there can be at most 
one chamber $C$ satisfying $C_F = D$ and $f \leq C$ and such a chamber must be 
given by
\begin{equation*}
C_j = 
	\begin{cases}
	D_j&  \text{if $H_j \in \A_F$},\\
	F_j&  \text{if $F_j \notin \A_F$}.
	\end{cases}
\end{equation*}
On the other hand, the sign vector given by this formula is precisely $F \circ 
C_0$, which is in $\mathcal{L}(\A)$ by Theorem \ref{matroid} and (L2).  Clearly, 
$C$ is a tope.
\end{proof}

\begin{lemma}\label{wang}
Let $(F_0,C_0), \ldots, (F_n,C_n)$ be elements of $\Sal(\A)$, where 
	$$(F_0,C_0) \leq \cdots \leq (F_n,C_n).$$
Then $C_n \subseteq (C_i)_{F_i}$ for all $i$. 
\end{lemma}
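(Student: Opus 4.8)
The statement is really a statement about restrictions of sign vectors, so the plan is to first unwind the partial order on $\Sal(\A)$ and then translate the geometric inclusion $C_n \subseteq (C_i)_{F_i}$ into an equality of covectors on the coordinates indexing $\A_{F_i}$. Since the order on $\Sal(\A)$ is a partial order, from the chain $(F_0,C_0) \leq \cdots \leq (F_n,C_n)$ we get $(F_i,C_i) \leq (F_n,C_n)$ for every $i$, and by the definition of the order this means precisely $F_n \leq F_i$ and $F_i \circ C_n = C_i$. (The first relation will not even be needed.) Recall also the dictionary from Chapter \ref{chap2}: $\A_{F_i} = \{H_j \in \A : F_i \subseteq H_j\}$, and in covector language $F_i \subseteq H_j$ is exactly the condition $(F_i)_j = 0$; moreover the chamber $(C_i)_{F_i}$ of $\A_{F_i}$ is the one described by the restriction of the tope $C_i$ to those indices $j$ with $(F_i)_j = 0$.

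Next I would do the one-line computation that is the heart of the argument. Fix $i$ and let $j$ be any index with $(F_i)_j = 0$, i.e.\ $H_j \in \A_{F_i}$. By the componentwise definition of composition,
$$(C_i)_j = (F_i \circ C_n)_j = (C_n)_j,$$
since $(F_i)_j = 0$ forces the composition to read off the $C_n$-entry. Thus the topes $C_i$ and $C_n$ restrict to one and the same sign vector on the subarrangement $\A_{F_i}$. Hence $C_n$ lies in the same chamber of $\A_{F_i}$ as $C_i$; since $(C_i)_{F_i}$ is by definition the chamber of $\A_{F_i}$ containing $C_i$, this chamber also contains $C_n$, i.e.\ $C_n \subseteq (C_i)_{F_i}$. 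As $i$ was arbitrary, this gives the claim for all $i$.

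The only point that needs a word of justification — and the closest thing here to an ``obstacle'' — is the passage between the geometric formulation ($C_n$ and $C_i$ sit inside the same chamber of $\A_{F_i}$ as subsets of $\R^\ell$) and the combinatorial one (their covectors agree on the coordinates of $\A_{F_i}$). This is immediate from the fact that each chamber of a subarrangement is carved out by a fixed sign pattern on that subarrangement, and a chamber $C$ of $\A$ is contained in it exactly when the constant sign vector of $C$ restricts to that pattern; this is essentially the content (and the proof technique) of Lemma \ref{beef}, so I would simply cite the description of chambers of $\A_F$ given there rather than re-deriving it. No induction is needed: transitivity of the order on $\Sal(\A)$ already reduces everything to the direct comparison of $(F_i,C_i)$ with $(F_n,C_n)$.
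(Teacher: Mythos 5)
Your proof is correct, and it takes a genuinely more direct route than the paper's. The paper proves the claim by downward induction on $i$: starting from $C_n \subseteq (C_n)_{F_n}$, it assumes $C_n \subseteq (C_{i+1})_{F_{i+1}}$, fixes $j$ with $(F_i)_j = 0$, and chains together $(C_i)_j = (C_{i+1})_j$ (from the adjacent relation $(F_i,C_i) \leq (F_{i+1},C_{i+1})$, i.e.\ $F_i \circ C_{i+1} = C_i$) with $(C_{i+1})_j = (C_n)_j$ (from the inductive hypothesis). You instead invoke transitivity of the order on $\Sal(\A)$ to collapse the whole chain to the single relation $(F_i,C_i) \leq (F_n,C_n)$, which hands you $F_i \circ C_n = C_i$ directly, and then the same one-line componentwise computation finishes it. Both are correct; the paper's induction is essentially re-deriving transitivity of the order in this special case, while your argument takes that transitivity as given (the paper does assert the relation is a partial order, so this is fair, though one could remark that the transitivity itself requires the short check that $F' \leq F''$ implies $F'' \circ F' = F''$). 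Your version is shorter and, in my view, the cleaner way to present it.
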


\begin{proof}
Recall that $(F_i,C_i) \leq (F_{i+1}, C_{i+1})$ implies that 
	$$F_{i+1} \leq F_i \ \text{and} \ F_i \circ C_{i+1} = C_i.$$
We will prove that $C_n \subseteq (C_i)_{F_{i}}$ for all $i$ by induction.  
First, $C_n \subseteq (C_n)_{F_n}$, by definition.  Now, assume that $C_n 
\subseteq (C_{i+1})_{F_{i+1}}$.  Fix $j$ with $(F_i)_j = 0$.  Then $(F_{i+1})_j 
= 0$, since $F_{i+1} \leq F_i$.  Now, since $(F_i,C_i) \leq (F_{i+1},C_{i+1})$,
	$$F_i \circ C_{i+1} = C_i.$$
But $(F_i \circ C_{i+1})_j = (C_{i+1})_j$, by definition, since $(F_i)_j = 0$.  
So, $(C_i)_j = (C_{i+1})_j$.  Also, $(C_n)_j = (C_{i+1})_j$ since $C_n \subseteq 
(C_{i+1})_{F_{i+1}}$.  Hence, 
	$$(C_i)_j = (C_{i+1})_j = (C_n)_j\text{,}$$
whenever $H_j$ is a hyperplane of $\A_{F_i}$.  Therefore, $C_n \subseteq 
(C_i)_{F_i}$.  
\end{proof}

Now, we are ready to state the main result of this paper, the proof of which 
will rely on the above lemmas and the Nerve Theorem.  The proof that we give 
follows the same framework as a proof given by L. Paris in \cite{PA}.  In 
\cite{PA}, Paris constructs a concrete Salvetti complex from pairs $(F,C)$, 
where $F$ is a face of an essential arrangement $\A$ and $C$ is an arbitrary 
chamber.  He does not require that $F$ be a face of $C$, as we have.  From each 
pair $(F,C)$, Paris forms the vertex
	$$z(F,C) = x(F) + ix(C_F)$$
in $\C^\ell$.  He points out that $z(F_1,C_1) = z(F_2,C_2)$ iff $F_1 = F_2$ and 
$(C_1)_{F_1} = (C_2)_{F_2}$.  Our Lemma \ref{beef} shows that we get the same 
vertex set as Paris.

\begin{theorem}[{\rm Salvetti} \cite{SA2}, {\rm Paris} \cite{PA}]\label{hmtpy}
Let $\A$ be an oriented hyperplane arrangement.  Then $|\Sal'(\A)|$ has the same 
homotopy type as $M(\A)$.
\end{theorem}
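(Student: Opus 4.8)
The plan is to apply the Nerve Theorem (Theorem~\ref{nerve}) to a carefully chosen open cover of $M(\A)$, and to show that the nerve of this cover is exactly $\Sal'(\A)$. For each pair $(F,C)\in\Sal(\A)$, Lemma~\ref{subset} tells us that $F+iC_F\subseteq M(\A)$; these sets are convex, hence contractible, but they need not be open. The fix is to thicken them: for each vertex $z(F,C)=x(F)+ix(C)$ of $|\Sal'(\A)|$ I would define an open set $U(F,C)\subseteq M(\A)$ by taking the interior (in $M(\A)$) of the cone $\hat K(x(F))+i\,C_F$, or more precisely the union of the open simplices $z(\Omega)$ of $|\Sal'(\A)|$ over all chains $\Omega$ whose minimum is $(F,C)$, i.e.\ the open star of the vertex $z(F,C)$ in the barycentric subdivision. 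First I would verify that $\{U(F,C):(F,C)\in\Sal(\A)\}$ is an open cover of $|\Sal'(\A)|$ — this is automatic, since open stars of vertices always cover a simplicial complex — and simultaneously realize these open stars as open subsets of $M(\A)$ via the embedding $|\Sal'(\A)|\hookrightarrow\C^\ell$ together with the product structure $\hat K(x(F))+iC_F$.

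Next I would show each $U(F,C)$ is contractible. The natural candidate deformation retracts $U(F,C)$ onto the vertex $z(F,C)$ along the ``cone'' structure: in the real coordinate one slides radially within $\hat K(x(F))$ toward $x(F)$ (using Lemma~\ref{word} and Lemma~\ref{nugget} to stay inside the right faces), and in the imaginary coordinate one contracts the convex set $C_F$ toward $x(C_F)$. One must check this homotopy stays inside $M(\A)$ at every time, which is where Lemma~\ref{subset} (applied pointwise) does the work: as long as the real part stays in a face $F'\le F$ and the imaginary part stays in $C_{F'}$, the point avoids every $H+iH$.

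Then comes the combinatorial heart: I would show that $U(F_0,C_0)\cap\cdots\cap U(F_n,C_n)\neq\emptyset$ if and only if $(F_0,C_0),\ldots,(F_n,C_n)$ form a chain in the poset $\Sal(\A)$, so that the nerve of the cover is precisely the abstract simplicial complex $\Sal'(\A)$. The ``if'' direction is easy: if the pairs form a chain $(F_0,C_0)<\cdots<(F_n,C_n)$ then the barycenter of the simplex $z(\Omega)$ lies in every open star, using Lemma~\ref{wang} to guarantee $C_n\subseteq (C_i)_{F_i}$ for all $i$ so that the relevant imaginary parts are mutually compatible, and Lemma~\ref{inequality}/Lemma~\ref{nugget} for the real parts. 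The ``only if'' direction is the main obstacle: given a point $x+iy$ in the intersection, I must recover a total order on the pairs. The idea is that $x$ lies in a common cone $\hat K(x(F_i))$ for all $i$, which forces the faces $F_i$ to be comparable (the $x(F_i)$ all lie in the closure of the top face of whichever simplex of $S^{\ell-1}$ contains $x/\lvert x\rvert$, by Lemma~\ref{word}), and $y$ lying in each $C_{F_i}$ together with $F_i\le C_i$ pins down each $C_i$ via Lemma~\ref{beef}; the composition identity $F_i\circ C_j = C_i$ for $F_j\le F_i$ then follows, giving the chain relation. Once the nerve is identified with $\Sal'(\A)$, Theorem~\ref{nerve} yields that $|N(\mathcal U)|\simeq|\Sal'(\A)|$ has the homotopy type of $M(\A)$, completing the proof. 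The delicate points to watch are that the $U(F,C)$ genuinely are \emph{open} in $M(\A)$ (not merely in $|\Sal'(\A)|$), which requires knowing $|\Sal'(\A)|$ sits inside $M(\A)$ as a deformation retract neighborhood's spine, and that the cone construction is compatible with the simplicial structure on $B^\ell$ fixed in Chapter~\ref{chap2}.
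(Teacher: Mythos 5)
Your overall strategy is the same as the paper's: define an open cover $\mathcal U=\{U(F,C)\}$ of $M(\A)$ with nerve equal to $\Sal'(\A)$, and apply the Nerve Theorem. You also correctly identify the role of Lemmas \ref{word}, \ref{nugget}, \ref{subset}, \ref{wang}, and \ref{beef}. But there are several substantive gaps. First, you conflate two different sets. The set $\hat K(x(F))+iC_F$ is a full-dimensional open subset of $\C^\ell$ (since $\Star(x(F))$ is open in $S^{\ell-1}$, its cone is open in $\R^\ell\setminus\{0\}$, and $C_F$ is open in $\R^\ell$), and no ``interior'' needs to be taken; it is \emph{not} the same as the open star of the vertex $z(F,C)$ in $|\Sal'(\A)|$, which is a union of open simplices contained in the lower-dimensional subcomplex $|\Sal'(\A)|$ and is only open relative to $|\Sal'(\A)|$. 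Your phrase ``or more precisely\ldots the open star of the vertex $z(F,C)$'' therefore introduces the wrong object, and your closing remark about the ``delicate points to watch'' shows you never actually resolve which set you mean. Second, you never handle $F=\{0\}$: there $x(\{0\})=0$ is not a vertex of $S_\A'$, so $\hat K(x(\{0\}))$ is undefined. The paper sets $U(\{0\},C)=\R^\ell+iC$ separately, and this case must appear in every subsequent step of the argument.

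Third, your plan only argues that the sets cover $|\Sal'(\A)|$ (``automatic, since open stars of vertices always cover a simplicial complex''), but for the Nerve Theorem with $A=M(\A)$ you must show they cover \emph{all} of $M(\A)$; this is the paper's assertion (2) and requires a genuine argument using the partition of $\R^\ell\setminus\{0\}$ by cones $K(\phi)$ together with Lemma \ref{beef}. Fourth, the Nerve Theorem needs every nonempty finite intersection of cover sets to be contractible, not just each individual set. Your proposed deformation retraction only treats a single $U(F,C)$; you give no reason intersections are contractible. The paper dispatches this cheaply by observing that each $U(F,C)$ is convex, so that nonempty intersections are convex, hence contractible; your radial retraction is both more work and incomplete. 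Finally, the remark that one needs to know in advance that ``$|\Sal'(\A)|$ sits inside $M(\A)$ as a deformation retract neighborhood's spine'' inverts the logic: that is a \emph{consequence} of the theorem, not a prerequisite for it. These issues are all repairable since you have the right scaffolding, but as written the argument does not go through.
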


\begin{proof}
First, assume that $B^\ell$ is provided with the simplicial decompostion 
determined by $\A$.  Now, for every $\omega \in \Sal(\A)$, we will associate an 
open convex subset $U(\omega)$ of $M(\A)$.  Let $(F,C)$ be an element of 
$\Sal(\A)$.  If $F = \{0\}$, then we set 
	$$U(F,C) = \R^\ell +iC.$$
If $F \neq \{0\}$, then we set
	$$U(F,C) = \hat{K}(x(F)) + iC_F.$$

By Lemma \ref{beef}, each the $U(\omega)$ are distinct.  Also, $U(\omega)$ is an 
open convex set for each $\omega \in \Sal(\A)$.  Then any nonempty intersection 
of elements of 
	$$\mathcal{U} = \{U(\omega): \omega \in \Sal(\A)\}$$
will be convex and thus contractible.  It is also clear that $z(\omega) \in 
U(\omega)$ for every $\omega$ of $\Sal(\A)$. 

\bigskip

Now, we will prove the following four assertions successively.
\begin{enumerate}[label=\rm{(\arabic*)}]
\item $U(\omega) \subseteq M(\A)$ for every $\omega \in \Sal(\A)$. 
\item $M(\A) \subseteq \bigcup_{\omega \in \Sal(\A)} U(\omega)$. 
\item Let $z(\omega_0), \ldots, z(\omega_n)$ be $(n+1)$ distinct vertices 
of $|\Sal'(\A)|$.  If 
	$$U(\omega_0) \cap \cdots \cap U(\omega_n) \neq \emptyset\text{,}$$
then $z(\omega_0), \ldots, z(\omega_n)$ are the vertices of a simplex 
$z(\Omega)$ of $|\Sal'(\A)|$. 
\item If $z(\omega_0), \ldots, z(\omega_n)$ are the vertices of a simplex 
$z(\Omega)$ of $|\Sal'(\A)|$, then 
	$$U(\omega_0) \cap \cdots \cap U(\omega_n) \neq \emptyset.$$ 
\end{enumerate}

Clearly, (1) and (2) prove that $\mathcal{U}$ is an open covering of $M(\A)$.  
Also, assertions (3) and (4) prove that $\Sal'(\A)$ is the nerve of 
$\mathcal{U}$.  This implies, by the Nerve Theorem (Theorem \ref{nerve}), that 
$|\Sal'(\A)|$ has the same homotopy type as $M(\A)$.

\bigskip
 
(1)  Let $(F,C) \in \Sal(\A)$.  If $F = \{0\}$, then 
	$$U(F,C) = \R^\ell+iC.$$  
Since $z = x+iy \in H+iH$ iff $x \in H$ and $y \in H$, $U(F,C)$ is clearly 
contained in $M(\A)$.  Now, assume that $F \neq \{0\}$.  Then 
	$$U(F,C) = \hat{K}(x(F)) + iC_F.$$
Pick $z=x+iy \in U(F,C)$.  Then $x \in \hat{K}(x(F))$.  Thus, there exists a 
simplex $\phi$ of $S^{\ell-1}$ such that $x(F)$ is a vertex of $\phi$ and $x \in 
K(\phi)$.  Say 
	$$\phi = x(F_0) \vee \cdots \vee x(F_n)\text{,}$$
with 
	$$\{0\} \neq F_0 < \cdots < F_n\text{,}$$ 
where $F = F_i$ for some $i$.  By Lemma \ref{word}, $K(\phi) \subseteq F_n$, 
which implies that $x \in F_n$.  Also, $F \leq F_n$.  So, $C_F \subseteq 
C_{F_n}$, which implies that $y \in C_{F_n}$.  Therefore, 
	$$z = x+iy \in F_n+iC_{F_n}.$$
By Lemma \ref{subset}, $F_n+iC_{F_n} \subseteq M(\A)$ and so $z = x+iy \in 
M(\A)$.  Hence, $U(\omega) \subseteq M(\A)$ for every $\omega \in \Sal(\A)$.

\bigskip

(2)  Let $z=x+iy \in M(\A)$.  If $x=0$, then $x \in H$ for every $H \in \A$.  
Thus, $y \notin H$ for every $H \in \A$, since $x+iy \in M(\A)$.  So, there 
must exist a chamber $C$ of $\A$ such that $y \in C$.  Therefore, 
	$$z=x+iy \in \R^\ell+iC.$$
But $(\R^\ell+iC) = U(\{0\},C)$.  Hence, 
	$$z=x+iy \in U(\{0\},C).$$
Now, assume that $x \neq 0$.  Then there exists a simplex $\phi$ 
of $S^{\ell-1}$ such that $x \in K(\phi)$.  Say 
	$$\phi = x(F_0) \vee \cdots \vee x(F_n)$$
where 
	$$\{0\} \neq F_0< \cdots <F_n.$$  
Since $x \in K(\phi) \subseteq F_n$ and $z=x+iy \in M(\A)$, there is no 
hyperplane $H \in \A$ containing $F_n$ which also contains $y$.  Hence, there 
exists a chamber $D$ of $\A_{F_n}$ such that $y \in D$.  By Lemma \ref{beef}, 
there exists a chamber $C$ of $\A$ such that $C_{F_n}=D$ and $F_n \leq C$, so 
that $(F_n,C) \in \Sal(\A)$.  Then 
	$$z=x+iy \in K(\phi)+iC_{F_n}.$$  
Also, by Lemma \ref{nugget}, $K(\phi) \subseteq \hat{K}(x(F_n))$.  So, 			
	$$K(\phi)+iC_{F_n} \subseteq \hat{K}(x(F_n)) + iC_{F_n} = U(F_n,C).$$  
Therefore, 
	$$z = x+iy \in U(F_n,C).$$
Thus, $M(\A) \subseteq \bigcup_{\omega \in \Sal(\A)}U(\omega)$.

\bigskip

(3)  Let $z(\omega_0), \ldots, z(\omega_n)$ be $(n+1)$ distinct vertices of 
$|\Sal'(\A)|$ such that 
	$$U(\omega_0) \cap \cdots \cap U(\omega_n) \neq \emptyset.$$
Then $\omega_0, \ldots, \omega_n$ are $(n+1)$ distinct elements of $\Sal(\A)$.  
Set 
	$$\omega_i = (F_i,C_i)$$
for all $i$ and let 
	$$z = x+iy \in \bigcap_{i=0}^nU(F_i,C_i).$$
We have two cases to consider:  $F_i = \{0\}$ for some $i$ and $F_i \neq \{0\}$ 
for all $i$.

\bigskip

\emph{Case A}:  Without loss of generality, assume that $F_0 = \{0\}$.  First, 
suppose that there exists an $j$ such that $F_j = \{0\}$.  Then
	$$z = x+iy \in U(\omega_0) \cap U(\omega_j) = (\R^\ell+iC_0) \cap 		
	(\R^\ell+iC_j).$$
But then $y \in C_0 \cap C_j$.  So, $C_0 \cap C_j \neq \emptyset$, which implies 
that $C_0 = C_j$.  Then 
	$$\omega_0 = (\{0\},C_0) = (\{0\},C_j) = \omega_j.$$
But this contradicts the fact that $\omega_0, \ldots, \omega_n$ are $(n+1)$ 
distinct elements of $\Sal(\A)$.  Therefore, $F_j \neq \{0\}$ for all $j > 0$.  
Now, since $x \in \hat{K}(x(F_i))$ for each $i$, there exists a simplex $\phi_i$ 
of $S^{\ell-1}$ such that $x(F_i)$ is a vertex of $\phi_i$ and $x \in 
K(\phi_i)$, for all $i$.  This implies that 
	$$\phi_1 = \cdots = \phi_n\text{,}$$
since 
	$$\{K(\phi): \phi \ \text{a simplex of} \ S^{\ell-1}\}$$
is a partition of $\R^\ell-\{0\}$.  Hence, $x(F_1), \ldots, x(F_n)$
are the vertices of some simplex $\phi$ of $S^{\ell-1}$.  

\bigskip

Without loss of generality, assume that 
	$$\{0\} = F_0 < F_1 \leq \cdots \leq F_n.$$
Now, we need to show that $F_{i+1} \circ C_i = C_{i+1}$, for all $i$. We will 
show that 
	$$(F_{i+1} \circ C_i)_j = (C_{i+1})_j$$
for each $j$.  Recall that
\begin{equation*}
(F_{i+1} \circ C_i)_j =
	\begin{cases}
	(F_{i+1})_j&  \text{if $(F_{i+1})_j \neq 0$},\\
	(C_i)_j&  \text{if $(F_{i+1})_j = 0$}.
	\end{cases}
\end{equation*}
Fix $j$.  First, assume that $(F_{i+1})_j \neq \{0\}$.  Then 
	$$(F_{i+1} \circ C_i)_j = (F_{i+1})_j.$$
But since $F_{i+1} \leq C_{i+1}$, $(F_{i+1})_j = (C_{i+1})_j$.  Hence, 
	$$(F_{i+1} \circ C_i)_j = (C_{i+1})_j.$$  
Now, assume that $(F_{i+1})_j = \{0\}$.  Then $F_{i+1}$ lies inside the 
hyperplane $H_j$.  Also, 
	$$(F_{i+1} \circ C_i)_j = (C_i)_j$$
by definition.  We need to show that $(C_i)_j = (C_{i+1})_j$.  We know that 		
	$$(C_i)_{F_i} \cap (C_{i+1})_{F_{i+1}} \neq \emptyset$$
since both factors contain $y$, and that $F_i \leq F_{i+1}$.  Then $\A_{F_{i+1}} 
\subseteq \A_{F_i}$, which implies that $(C_i)_{F_i} \subseteq (C_i)_{F_{i+1}}$. 
Since 
	$$(C_i)_{F_i} \cap (C_{i+1})_{F_{i+1}} \neq \emptyset$$
and $(C_i)_{F_i} \subseteq (C_i)_{F_{i+1}}$, 
	$$(C_i)_{F_{i+1}} \cap (C_{i+1})_{F_{i+1}} \neq \emptyset.$$  
So, 
	$$(C_i)_{F_{i+1}} = (C_{i+1})_{F_{i+1}}.$$ 
Since $H_j \in \A_{F_{i+1}}$, $C_i$ and $C_{i+1}$ lie on the same side of $H_j$. 
Hence, $(C_i)_j = (C_{i+1})_j$.  Therefore, 
	$$(F_{i+1} \circ C_i)_j = (C_{i+1})_j.$$  
Hence 
	$$F_{i+1} \circ C_i = C_{i+1}.$$  
So, $(F_{i+1}, C_{i+1}) \leq (F_i,C_i)$ for all $i$.  That 
is, 
	$$(F_n,C_n) \leq \cdots \leq (F_1,C_1) \leq (F_0,C_0).$$

\bigskip

\emph{Case B}:  Assume that $F_0 \neq \{0\}$.  Since $x \in \hat{K}(x(F_i))$ for 
each $i$, there exists a simplex $\phi_i$ of $S^{\ell-1}$ such that $x(F_i)$ is 
a vertex of $\phi_i$ and $x \in K(\phi_i)$, for $i$.  This implies that 
	$$\phi_0 = \phi_1 = \cdots = \phi_n\text{,}$$
and so, $x(F_0), \ldots, x(F_n)$ are the vertices of some simplex $\phi$ of 
$S^{\ell-1}$, just as in the previous case.  

\bigskip

Without loss of generality, assume that 
	$$\{0\} \neq F_0 < F_1 \leq \cdots \leq F_n.$$
The proof that $F_{i+1} \circ C_i = C_{i+1}$, for each $i$ is identical to the 
proof above.  So, we have that
	$$(F_n,C_n) \leq \cdots \leq (F_1,C_1) \leq (F_0,C_0).$$

\bigskip

Therefore, in either case, we have that 
	$$(F_n,C_n),\ldots, (F_1,C_1), (F_0,C_0)$$ 
are the vertices of an $(n+1)$ simplex $\Omega$ of $\Sal'(\A)$.  It follows that 
	$$z(F_n,C_n), \ldots, z(F_1,C_1), z(F_0,C_0)$$
are the vertices of an $(n+1)$ simplex of $|\Sal'(\A)|$. 

\bigskip

(4)  Let 
	$$z(\Omega) = z(\omega_0) \vee \cdots \vee z(\omega_n)$$ 
be a simplex of $|\Sal'(\A)|$, where 
	$$\omega_0 \leq \cdots \leq \omega_n.$$  
Then 
	$$\Omega = (\omega_0 < \cdots < \omega_n)$$
is a simplex of $\Sal'(\A)$.  Set	
	$$\omega_i = (F_i,C_i)$$
for each $i$.  Then 
	$$F_0 \geq \cdots \geq F_n.$$  
Note the change in the inequality above.  Let 
	$$\phi = x(F_0) \vee x(F_1) \vee \cdots \vee x(F_n).$$
Then $\phi$ is a simplex of $B^\ell$.  We have two cases to consider:  $F_n = 
\{0\}$ and $F_n \neq \{0\}$.

\bigskip

\emph{Case A}:  Assume that $F_n = \{0\}$.  Consider the simplex 
	$$\phi' = x(F_0) \vee \cdots \vee x(F_{n-1})$$
of $S^{\ell-1}$.  Let $x \in K(\phi')$ and $y \in C_n$.  Set 
	$$z = x+iy.$$
Clearly, we have 
	$$z \in \R^\ell+iC_n = U(F_n,C_n).$$
Since $x \in K(\phi')$, $x \in \hat{K}(x(F_i))$ for each $i = 0, 1, \ldots, 
n-1$, by Lemma \ref{nugget}.  Also, $y \in (C_i)_{F_i}$ for $i = 0, 1, 
\ldots, n$, by Lemma \ref{wang}.  Therefore, 
	$$z = x+iy \in \hat{K}(x(F_i))+i(C_i)_{F_i} = U(F_i,C_i)$$ 
for $i = 0, \ldots, n$.  So, it follows that 
	$$U(\omega_0) \cap \cdots \cap U(\omega_n) \neq \emptyset.$$

\bigskip

\emph{Case B}:  Assume that $F_n \neq \{0\}$.  Then $\phi$ is a simplex of 
$S^{\ell-1}$.  Let $x \in K(\phi)$ and $y \in C_n$.  Set 
	$$z = x+iy.$$
Since $x \in K(\phi)$, $x \in \hat{K}(F_i)$ for every $i$, by Lemma 
\ref{nugget}.  Also, $y \in (C_i)_{F_i}$ for each $i$, by Lemma \ref{wang}.  
Therefore, 
	$$z = x+iy \in \hat{K}(x(F_i))+i(C_i)_{F_i} = U(F_i,C_i)$$
for each $i$.  It follows that
	$$U(\omega_0) \cap \cdots \cap U(\omega_n) \neq \emptyset.$$
\end{proof}

\begin{corollary}
Let $\A$ be an essential hyperplane arrangement.  Then 
	$$|\Sal'(\A)| \subseteq M(\A).$$
\end{corollary}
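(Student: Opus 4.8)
The plan is to reduce the statement to a claim about individual simplices: since $|\Sal'(\A)|$ is the union of the closed simplices $z(\Omega)$, it suffices to show $z(\Omega)\subseteq M(\A)$ for every simplex $\Omega=(\omega_0<\cdots<\omega_n)$ of $\Sal'(\A)$. Writing $\omega_i=(F_i,C_i)$, a point of $z(\Omega)$ has the form $z=x+iy$ with $x=\sum_i t_i\,x(F_i)$ and $y=\sum_i t_i\,x(C_i)$ for scalars $t_i\ge 0$ with $\sum_i t_i=1$. I would then fix a hyperplane $H_j\in\A$ with linear form $\alpha_j$ and show that $\alpha_j(x)\neq 0$ or $\alpha_j(y)\neq 0$; this says $z\notin H_j+iH_j$, and since $j$ is arbitrary it gives $z\in M(\A)$.

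The combinatorial heart of the argument is to read the poset relations in $\Sal(\A)$ in the $j$-th coordinate. From $(F_i,C_i)<(F_{i+1},C_{i+1})$ one has $F_{i+1}\le F_i$ and $F_i\circ C_{i+1}=C_i$, and membership in $\Sal(\A)$ gives $F_i\le C_i$. Tracking these componentwise shows that the set of indices $i$ with $(F_i)_j\neq 0$ is an initial segment $\{0,\dots,m\}$ (possibly empty, since the faces shrink along the chain), that on this segment all the $(F_i)_j$ — and hence all the $(C_i)_j$ — equal a single nonzero sign $\varepsilon$, and that for $i>m$ the composition relation $F_i\circ C_{i+1}=C_i$ forces $(C_i)_j$ to be constant, equal to $\delta:=(C_n)_j\neq 0$. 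Consequently $\alpha_j(x(F_i))$ has sign $\varepsilon$ for $i\le m$ and is $0$ for $i>m$, while $\alpha_j(x(C_i))$ has sign $\varepsilon$ for $i\le m$ and sign $\delta$ for $i>m$.

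With this picture the estimate is immediate, splitting on $s:=\sum_{i\le m}t_i$. If $s>0$, then $\varepsilon\,\alpha_j(x)=\sum_{i\le m}t_i\bigl(\varepsilon\,\alpha_j(x(F_i))\bigr)$ is a sum of nonnegative terms at least one of which is strictly positive, so $\alpha_j(x)\neq 0$. If $s=0$, then $\sum_{i>m}t_i=1$ and $\delta\,\alpha_j(y)=\sum_{i>m}t_i\bigl(\delta\,\alpha_j(x(C_i))\bigr)$ is likewise a sum of nonnegative terms that is not all zero, so $\alpha_j(y)\neq 0$. Either way $z\notin H_j+iH_j$, completing the proof once $j$ is allowed to vary. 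I do not expect a genuine obstacle here: morally this corollary just promotes assertion (1) in the proof of Theorem \ref{hmtpy} from the vertices $z(\omega)$ to the full simplices $z(\Omega)$. The one point requiring care is the bookkeeping around the two order conventions — the face order and the covector order reverse upon passing to $\Sal(\A)$, which is exactly Lemma \ref{inequality} — so one must keep straight in which direction the faces $F_i$ shrink along a chain.
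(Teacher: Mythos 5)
Your proof is correct, but it takes a genuinely different route from the paper's. The paper's one-line proof invokes the open cover $\mathcal{U}=\{U(\omega)\}$ from the proof of Theorem~\ref{hmtpy}: step~(1) gives $U(\omega)\subseteq M(\A)$, step~(4) gives $\bigcap_i U(\omega_i)\neq\emptyset$, and convexity of the $U(\omega_i)$ is then meant to force $z(\Omega)\subseteq\bigcup_i U(\omega_i)\subseteq M(\A)$. You instead bypass $\mathcal{U}$ entirely and verify directly, hyperplane by hyperplane, that no point $z=\sum_i t_i\,z(\omega_i)$ of a simplex can satisfy $\alpha_j(x)=\alpha_j(y)=0$: the chain conditions $F_{i+1}\leq F_i$, $F_i\circ C_{i+1}=C_i$, and $F_i\leq C_i$ force the signs $(F_i)_j$ to be a single nonzero $\varepsilon$ on an initial segment $\{0,\dots,m\}$ and $0$ thereafter, with $(C_i)_j=\varepsilon$ on that segment and a constant nonzero $\delta$ off it, and then the split on $s=\sum_{i\leq m}t_i$ shows $\varepsilon\,\alpha_j(x)>0$ when $s>0$ and $\delta\,\alpha_j(y)>0$ when $s=0$. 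The componentwise analysis is carried out correctly, including the degenerate cases $m=-1$ (all $(F_i)_j=0$) and $m=n$. What your approach buys is self-containment and transparency: it requires nothing from the Nerve-Theorem machinery beyond the definition of $z(\omega)$, and it makes explicit exactly which convexity/sign facts are doing the work. The paper's route is shorter as written but leans on a geometric step --- that a simplex whose vertices lie in pairwise-intersecting convex sets is covered by their union --- which is not automatic for arbitrary convex sets and really depends on the particular cone-plus-chamber structure of the $U(\omega)$; your argument sidesteps that subtlety altogether.
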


\begin{proof}
The result follows from the fact that each $U(\omega_i) \in \mathcal{U}$ is 
convex and from steps (1) and (4) of the proof of Theorem \ref{hmtpy}.
\end{proof}

Let $\A$ be an essential hyperplane arrangement.  Given $|\Sal'(\A)|$, we can 
easily form the realization $|\Sal(\A)|$ of $\Sal(\A)$.  For each $\omega \in 
\Sal(\A)$ form the cell 
	$$|\omega| = \bigcup_{z(\omega) \leq z(\Omega)}z(\Omega)\text{,}$$
where $\Omega$ is a simplex of $\Sal'(\A)$.  This construction yeilds a regular 
cell complex which is a sort of anti-order complex.  Also, it is clear that 
$|\Sal'(\A)|$ and $|\Sal(\A)|$ are homeomorphic as topological spaces.  So, by 
Proposition \ref{homeo}, we have the following corollary.     

\begin{corollary}
The regular cell complex $|\Sal(\A)|$ has the same homotopy type as $M(\A)$.  \hfill $\Box$
\end{corollary}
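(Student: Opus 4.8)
The plan is to deduce the statement by combining Theorem~\ref{hmtpy} with the fact that passing to the barycentric subdivision does not change homotopy type. First I would record what the construction immediately preceding the statement actually produces: the sets $|\omega|$, $\omega \in \Sal(\A)$, are the closed cells of a regular cell complex whose face poset is $\Sal(\A)$ itself, and whose first barycentric subdivision is, by construction, the concrete simplicial complex $|\Sal'(\A)|$. Indeed $\Sal'(\A)$ was defined to be the order complex of the poset $\Sal(\A)$, so $|\Sal'(\A)| = |\Ord(\mathcal{F}(|\Sal(\A)|))| = |\Sal(\A)|'$.

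Granting this, the argument is short. By Proposition~\ref{homeo}, a regular cell complex is homeomorphic to its barycentric subdivision; applied to $X = |\Sal(\A)|$ this yields a homeomorphism $|\Sal(\A)| \cong |\Sal'(\A)|$, and in particular a homotopy equivalence. Theorem~\ref{hmtpy} asserts that $|\Sal'(\A)|$ has the same homotopy type as $M(\A)$. Composing, $|\Sal(\A)|$ has the same homotopy type as $M(\A)$, which is the claim.

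The one point that requires care --- and the only place where there is genuinely something to check --- is the assertion that the $|\omega|$ really do assemble into a regular cell complex with face poset $\Sal(\A)$: that each $|\omega|$ is a ball of the appropriate dimension, that $\overline{|\omega|}\setminus|\omega|$ is a union of cells $|\omega'|$ with $\omega' < \omega$ in $\Sal(\A)$, and that the corresponding open cells partition $|\Sal'(\A)|$. This is the standard dual-block (``anti-order complex'') decomposition attached to a simplicial complex that arises as an order complex; it can be verified directly from the simplicial structure of $|\Sal'(\A)|$, or one may appeal to Bj\"orner's characterization of face posets of regular cell complexes \cite{BJ}, since the poset $\Sal(\A)$ together with $\Sal'(\A)$ as its order complex already encodes exactly such data. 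Once this structural fact is in hand, no further work is needed beyond the two-line chaining above.
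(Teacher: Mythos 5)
Your proposal is correct and follows essentially the same route as the paper: both deduce the corollary from Theorem~\ref{hmtpy} together with Proposition~\ref{homeo}, after observing that $|\Sal'(\A)|$ is the barycentric subdivision of the regular cell complex $|\Sal(\A)|$ whose face poset is $\Sal(\A)$. You are slightly more explicit than the paper in flagging the one point that actually needs verification (that the anti-order-complex cells $|\omega|$ assemble into a regular cell complex with the right face poset), which the paper asserts without justification; otherwise the argument is identical.
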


Recall that in Chapter \ref{chap3} we proved that the complement of the braid arrangement, 
$M(\A_{\ell-1})$, is isomorphic to the labelled configuration space 
$\hat{F_\ell}(\C)$.  This implies the following result.

\begin{corollary}
$|\Sal(\A_{\ell-1})|$ has the same homotopy type as $\hat{F_\ell}(\C)$. \hfill $\Box$
\end{corollary}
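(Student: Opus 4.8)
The plan is to chain together two facts already in hand: the corollary just proved, that $|\Sal(\A)|$ has the same homotopy type as $M(\A)$ for an \emph{essential} arrangement, and the identification $M(\A_{\ell-1}) \cong \hat{F_\ell}(\C)$ recorded in Chapter \ref{chap3}. The only thing blocking an immediate citation is that $\A_{\ell-1}$ is not essential, since $\bigcap \A_{\ell-1}$ is the line $L = \{x_1 = \cdots = x_\ell\}$. So the first step is to replace $\A_{\ell-1}$ by its essentialization $\A'$, obtained as in Chapter \ref{chap3} by intersecting every $H_{ij}$ with the hyperplane $V = \{x_1 + \cdots + x_\ell = 0\}$ orthogonal to $L$. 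I would check that $\A'$ is essential in $V \cong \R^{\ell-1}$ and, more to the point, that it carries exactly the same labelled face data: with the inherited numbering and orientation, $\mathcal{L}(\A')$ equals $\mathcal{L}(\A_{\ell-1})$ as a subset of $\{+,-,0\}^{\binom{\ell}{2}}$. Since the poset $\Sal(\A)$ depends only on $\mathcal{L}(\A)$ (through $\leq$ on faces and the composition $\circ$), this yields $\Sal(\A_{\ell-1}) = \Sal(\A')$ as posets, and hence $|\Sal(\A_{\ell-1})|$ and $|\Sal(\A')|$ are homeomorphic as regular cell complexes.

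Next I would observe that $M(\A_{\ell-1})$ and $M(\A')$ differ only by a contractible factor. Every $H_{ij}$ contains $L$, so under $\R^\ell = V \oplus L$ we have $H_{ij} = (H_{ij}\cap V) \oplus L$, and complexifying, $H_{ij}+iH_{ij} = (H_{ij}\cap V)_\C \oplus L_\C$. Deleting these from $\C^\ell = V_\C \oplus L_\C$ gives $M(\A_{\ell-1}) \cong M(\A') \times L_\C \cong M(\A') \times \C$; since $\C$ is contractible, the projection is a homotopy equivalence $M(\A_{\ell-1}) \simeq M(\A')$.

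Putting the pieces in order: $|\Sal(\A_{\ell-1})| \cong |\Sal(\A')| \simeq M(\A')$ by the preceding corollary applied to the essential arrangement $\A'$; then $M(\A') \simeq M(\A_{\ell-1})$ by the product decomposition; and finally $M(\A_{\ell-1}) \cong \hat{F_\ell}(\C)$ by Chapter \ref{chap3}. Composing these gives the asserted homotopy equivalence. The only genuinely nontrivial point — and the step I would be most careful about — is this reduction to the essential case: one must be certain both that essentialization leaves $\Sal$, hence its realization, unchanged, and that it alters $M$ only up to the contractible factor $\C$. Everything past that is bookkeeping layered on results already available.
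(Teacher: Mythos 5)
Your proof is correct, and it follows the same basic chain as the paper — compose the immediately preceding corollary ($|\Sal(\A)| \simeq M(\A)$) with the Chapter \ref{chap3} identification $M(\A_{\ell-1}) \cong \hat{F_\ell}(\C)$. The paper in fact offers nothing beyond this chain; the corollary is asserted with only the sentence ``This implies the following result.''

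What you add, and correctly flag as the one nontrivial point, is the essentiality issue: the preceding corollary is stated for \emph{essential} arrangements, and all of the concrete machinery ($B_\A'$, the cone construction, the realization $|\Sal'(\A)|$) is built under that hypothesis, whereas $\A_{\ell-1}$ is not essential. Chapter \ref{chap3} does remark that one can essentialize $\A_{\ell-1}$ by intersecting with $\{x_1 + \cdots + x_\ell = 0\}$ and that this ``preserves the structure of the face poset,'' but that observation is never wired back into this corollary. Your argument — that $\mathcal{L}(\A') = \mathcal{L}(\A_{\ell-1})$ as posets of covectors, hence $\Sal(\A') = \Sal(\A_{\ell-1})$ and the realizations agree up to homeomorphism, while $M(\A_{\ell-1}) \cong M(\A') \times \C \simeq M(\A')$ via the splitting $\C^\ell = V_\C \oplus L_\C$ — is the right way to close this, and the product decomposition of the complement is exactly what justifies passing to the essentialization at the level of homotopy type. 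So: same route as the paper, but you have supplied a step the paper tacitly skips.
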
   

\begin{paragraph}{Complex Covectors}
There is an alternative description of $\Sal(\A)$ which we will now discuss.  
If $\A$ consists of the hyperplanes $H_1, \ldots, H_n$, then we can form the set 
of \emph{complex covectors} for the complexified arrangement $\A_\C$, where each 
complex covector is an element of $\{0,+,-,i,-i\}^n$.  Let $\alpha_j$ be the 
defining equation for the complex hyperplane $H_j+iH_j$.  For each $z \in 
\C^\ell$ we associate a vector $X_z$, which we define componentwise via
\begin{equation*}
(X_z)_j =
	\begin{cases}
	0&  \text{if $\alpha_j(z) = 0$},\\
	+&  \text{if $\Real(\alpha_j(z)) > 0$},\\
	-&  \text{if $\Real(\alpha_j(z)) < 0$},\\
	i&  \text{if $\Real(\alpha_j(z)) = 0$ and $\Imag(\alpha_j(z)) > 0$},\\
	-i& \text{if $\Real(\alpha_j(z)) = 0$ and $\Imag(\alpha_j(z)) < 0$}.
	\end{cases}
\end{equation*}  
That is, the defining equation for $H_j+iH_j$ determines the $j^{th}$ entry for 
each vector  $X_z$.  The complex covectors for $\A_\C$ consist of all the 
vectors $X_z$ where $z \in \C^\ell$.  Certainly, the set of complex covectors is 
finite.  Note that the complex covectors which label the complement $M(\A)$ 
consist of only nonzero entries.

\begin{figure}[h]
\begin{center}

\includegraphics[width=4cm]{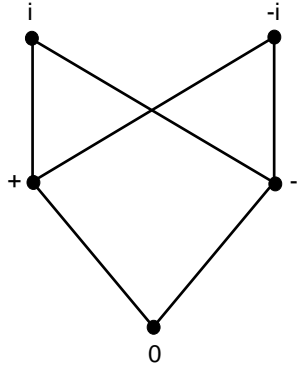}

\caption{The partial ordering on $\{0,+,-,i,-i\}$}
\label{complexpo}
\end{center}
\end{figure}

Now, define the partial order ``$\leq$" on the set $\{0,+,-,i,-i\}$ via $0<+$, 
$0<-$, $+ < i$, $- < -i$, $- < i$, and $+ < -i$ with $+$ and $-$ incomparable 
and $i$ and $-i$ incomparable.  See Figure \ref{complexpo}.

\bigskip

This induces a partial order on complex covectors in which complex covectors are 
compared componentwise.  That is, if $X$ and $X'$ are complex covectors of a 
complexified arrangement $\A_\C$, then 		
	$$X \leq X' \ \text{iff} \ X_j \leq X_j'$$
for all $j$.

\bigskip

We can convert each pair $(F,C) \in \Sal(\A)$ to a complex covector $F^C$.  Let 
$\mathcal{L}(\A_\C)$ denote the set of nowhere zero complex covectors for a 
complexified arrangement $\A_\C$.  Now, define $f: \Sal(\A) \to 
\mathcal{L}(\A_\C)$ via 
	$$f(F,C) = F^C \text{,}$$
where
\begin{equation*}
(F^C)_j =
	\begin{cases}
	F_j&  \text{if $F_j \neq 0$},\\
	i&  \text{if $F_j = 0$ and $C_j = +$},\\
	-i& \text{if $F_j = 0$ and $C_j = -$}.
	\end{cases}
\end{equation*}

\begin{example}
Let $\A$ be the oriented hyperplane arrangment of Figure \ref{covectors2}.  Then 
the correspondence between $(F,C)$ and $F^C$ for $\A$ is given below.

\bigskip

\begin{center}
$\begin{array}{lll}
++,++ & \longleftrightarrow & +,+\\
0+,++ & \longleftrightarrow & i,+\\
+0,++ & \longleftrightarrow & +,i\\
00,++ & \longleftrightarrow & i,i\\
-+,-+ & \longleftrightarrow & -,+\\
0+,-+ & \longleftrightarrow & -i,+\\
-0,-+ & \longleftrightarrow & -,i\\
00,-+ & \longleftrightarrow & -i,i\\
--,-- & \longleftrightarrow & -,-\\
-0,-- & \longleftrightarrow & -,-i\\
0-,-- & \longleftrightarrow & -i,-\\
00,-- & \longleftrightarrow & -i,-i\\
+-,+- & \longleftrightarrow & +,-\\
0-,+- & \longleftrightarrow & i,-\\
+0,+- & \longleftrightarrow & +,-i\\
00,+- & \longleftrightarrow & i,-i
\end{array}$
\end{center}
\end{example}

\begin{lemma}
The function $f$ is injective.
\end{lemma}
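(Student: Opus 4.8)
The plan is to show that the pair $(F,C)$ can be reconstructed from the single complex covector $F^C$; injectivity is then immediate. The guiding observation is that the formula defining $F^C$ discards nothing: it records $F_j$ verbatim on the coordinates where $F_j\neq 0$, and on the coordinates where $F_j=0$ it records the sign $C_j$, merely re-encoded as $i$ or $-i$.

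First I would recover $F$. Suppose $(F,C),(G,D)\in\Sal(\A)$ satisfy $F^C=G^D$. From the definition of $F^C$, for each index $j$ we have $(F^C)_j\in\{+,-\}$ exactly when $F_j\neq 0$, and in that case $(F^C)_j=F_j$; correspondingly $(F^C)_j\in\{i,-i\}$ exactly when $F_j=0$. Applying this to both $F$ and $G$ and using $F^C=G^D$, the set of coordinates on which $F$ vanishes equals the set on which $G$ vanishes, and on the complementary coordinates $F_j=(F^C)_j=(G^D)_j=G_j$. Hence $F=G$.

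Next I would recover $C$, now knowing $F=G$. Since $(F,C)\in\Sal(\A)$ we have $F\leq C$, so $C_j=F_j$ for every $j$ with $F_j\neq 0$, and likewise $D_j=G_j=F_j$ on those coordinates. On the coordinates $j$ with $F_j=0$, the definition gives $C_j=+$ when $(F^C)_j=i$ and $C_j=-$ when $(F^C)_j=-i$, and identically for $D$ read off from $(G^D)_j$. Because $F^C=G^D$ and $C,D$ are topes (so every coordinate is $+$ or $-$), this pins down $C_j=D_j$ on all coordinates. Therefore $C=D$, so $(F,C)=(G,D)$ and $f$ is injective.

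I do not expect a real obstacle here: the only place where the structure of $\Sal(\A)$ (as opposed to bare sign-vector bookkeeping) enters is the step $C_j=F_j$ on the support of $F$, which is precisely the face relation $F\leq C$ imposed in the definition of $\Sal(\A)$. In fact the argument exhibits an explicit left inverse of $f$: it sends a nowhere-zero complex covector $X\in\mathcal{L}(\A_\C)$ to the pair $(F,C)$ with $F_j=X_j$ if $X_j\in\{+,-\}$ and $F_j=0$ otherwise, and with $C_j=F_j$ off the support of $F$ while $C_j=+$ or $C_j=-$ according as $X_j=i$ or $X_j=-i$ on the support.
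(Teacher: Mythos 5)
Your argument is correct and essentially the same as the paper's: both proofs reconstruct the pair $(F,C)$ coordinate-by-coordinate from $F^C$, using that $(F^C)_j$ determines whether $F_j$ vanishes and, on the support of $F$, the face relation $F\le C$ forces $C_j=F_j$. The paper organizes this as a three-way case split on the value of $(F^C)_j$ whereas you first recover $F$ and then $C$, but the substance is identical.
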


\begin{proof}
Let $f$ be defined as above.  Assume that ${F_1}^{C_1} = {F_2}^{C_2}$.  Then 
$({F_1}^{C_1})_j = ({F_2}^{C_2})_j$ for each $j$.  We have three 
cases to consider.

\bigskip

\emph{Case A}:  Assume that $({F_1}^{C_1})_j = (F_1)_j$.  Then $({F_2}^{C_2})_j 
= (F_1)_j$. This forces $({F_2}^{C_2})_j = (F_2)_j$, since $({F_2}^{C_2})_j$ is 
real.  Hence $(F_1)_j = (F_2)_j$.  Also,  since $(F_1,C_1), (F_2,C_2) \in 
\Sal(\A)$, $(F_1)_j \leq (C_1)_j$ and $(F_2)_j \leq (C_2)_j$.  Thus, $(F_1)_j = 
(C_1)_j$ and $(F_1)_j = (C_2)_j$, since $(F_1)_j = (F_2)_j \neq 0$.  Hence 
$(C_1)_j = (C_2)_j$.

\bigskip

\emph{Case B}:  Assume that $({F_1}^{C_1})_j = i$.  Then $({F_2}^{C_2})_j = i$.  
Since $({F_1}^{C_1})_j = i$, $(F_1)_j = 0$ and $(C_1)_j = +$.  Also, since 
$({F_2}^{C_2})_j = i$, $(F_2)_j = 0$ and $(C_2)_j = +$.  Thus, $(F_1)_j = 
(F_2)_j$ and $(C_1)_j = (C_2)_j$.

\bigskip

\emph{Case C}:  Assume that $({F_1}^{C_1})_j = -i$.  Then $(F_1)_j = (F_2)_j$ 
and $(C_1)_j = (C_2)_j$, just as in Case B.

\bigskip

So, in any case, $(F_1)_j = (F_2)_j$ and $(C_1)_j = (C_2)_j$.  This implies that 
$(F_1,C_1) = (F_2,C_2)$.  Therefore, $f$ is injective.
\end{proof}

\begin{lemma}
The function $f$ is surjective.
\end{lemma}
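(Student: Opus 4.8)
The plan is to show that every nowhere-zero complex covector is of the form $F^C$ by reading the pair $(F,C)$ directly off a complex point that realizes the covector. Let $X \in \mathcal{L}(\A_\C)$. By the definition of the complex covectors of $\A_\C$, we have $X = X_z$ for some $z \in \C^\ell$, and since $X$ has no zero entry, $\alpha_j(z) \neq 0$ for every $j$, so $z \in M(\A)$. Write $z = x + iy$ with $x,y \in \R^\ell$; since each $\alpha_j$ is $\C$-linear with real coefficients, $\Real(\alpha_j(z)) = \alpha_j(x)$ and $\Imag(\alpha_j(z)) = \alpha_j(y)$.

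First I would let $F$ be the face of $\A$ containing $x$, identified with its covector, so that $F_j = +,-,$ or $0$ according to the sign of $\alpha_j(x)$. The key point is that $y$ lies in a chamber of the subarrangement $\A_F$: if $H_j \in \A_F$ then $F \subseteq H_j$, so $\alpha_j(x) = 0$, and since $z = x+iy \in M(\A)$ this forces $\alpha_j(y) \neq 0$; hence $y$ misses every hyperplane of $\A_F$ and determines a chamber $D$ of $\A_F$ with $D_j = \mathrm{sign}(\alpha_j(y))$ for every $H_j \in \A_F$ (when $F$ is itself a chamber, $\A_F = \emptyset$ and this is vacuous). By Lemma \ref{beef} there is then a unique chamber $C$ of $\A$ with $C_F = D$ and $F \leq C$, so $(F,C) \in \Sal(\A)$.

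It then remains to verify $f(F,C) = F^C = X_z$ componentwise. If $F_j \neq 0$, then $(F^C)_j = F_j = \mathrm{sign}(\alpha_j(x)) = (X_z)_j$, since $\alpha_j(x) \neq 0$. If $F_j = 0$, then $H_j \in \A_F$, so $C_j = (C_F)_j = D_j = \mathrm{sign}(\alpha_j(y))$ with $\alpha_j(y) \neq 0$; hence $(F^C)_j = i$ precisely when $C_j = +$, i.e.\ $\alpha_j(y) > 0$, which is exactly when $(X_z)_j = i$, and likewise $(F^C)_j = -i$ iff $(X_z)_j = -i$. Thus $F^C = X_z = X$, which proves $f$ is surjective.

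The argument is essentially bookkeeping with the sign conventions for complex covectors; the one step that genuinely uses the hypothesis $z \in M(\A)$ — and the only place where care is needed — is the claim that $y$ avoids all hyperplanes of $\A_F$, which is what makes $D$ a bona fide chamber of $\A_F$ and hence lets Lemma \ref{beef} apply to produce the chamber $C$.
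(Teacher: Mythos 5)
Your proof is correct and takes essentially the same approach as the paper's. The only difference is organizational: the paper first defines $(F,C)$ componentwise from $X_z$ and then identifies $F$ and $C$ geometrically (as the face containing $x$ and the chamber from Lemma~\ref{beef}), whereas you build $F$ and $C$ geometrically and then verify $f(F,C) = X_z$ componentwise; in both cases the key facts used are that $z \in M(\A)$ forces $y$ to avoid every hyperplane of $\A_F$, and Lemma~\ref{beef} to produce the chamber $C$.
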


\begin{proof}
Let $X_z \in \mathcal{L}(\A_\C)$.  Define $(F,C)$ via
\begin{equation*}
(F_j,C_j) =
	\begin{cases}
	(+,+)&  \text{if $(X_z)_j = +$},\\
	(-,-)&  \text{if $(X_z)_j = -$},\\
	(0,+)&  \text{if $(X_z)_j = i$},\\
	(0,-)&  \text{if $(X_z)_j = -i$}.
	\end{cases}
\end{equation*}
Then clearly, $f(F,C) = X_z$.  We must show that the covector $F$ labels a face 
of $\A$, the covector $C$ labels a chamber of $\A$, and that $F \leq C$, so that 
$(F,C)$ will be an element of $\Sal(\A)$.  Set $z = x+iy$.  From the definition 
of $(F,C)$, one sees that $F$ labels the face of $\A$ containing $x$, and $C$ 
labels the unique chamber having $F$ as a face, such that $C_F$ contains $y$, 
which exists by Lemma \ref{beef}.  This implies that $(F,C) \in \Sal(\A)$.  
Therefore, $f$ is surjective.  
\end{proof}

The previous two lemmas show that there is a bijection between $\Sal(\A)$ and 
$\mathcal{L}(\A_C)$.

\begin{figure}[h]
\begin{center}

\mbox{\subfigure[]{\includegraphics[width=3cm]{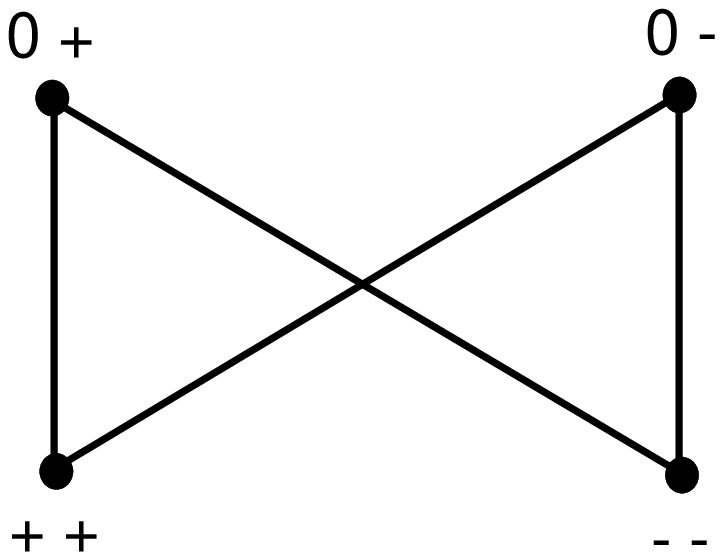}}\quad
\subfigure[]{\includegraphics[width=3cm]{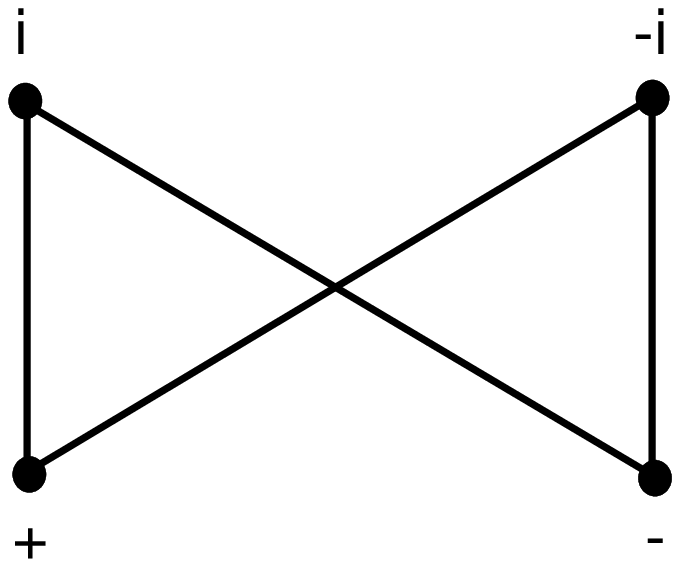}}}

\caption{Partial ordering on components}
\label{compwisepo}
\end{center}
\end{figure}

\begin{proposition}
$\Sal(\A)$ and $\mathcal{L}(\A_\C)$ are isomorphic as posets.
\end{proposition}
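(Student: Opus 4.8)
The plan is to leverage the bijection $f:\Sal(\A)\to\mathcal{L}(\A_\C)$ established in the preceding two lemmas and show that both $f$ and $f^{-1}$ are order-preserving. Since $f$ is already known to be a bijection, it suffices to verify the equivalence
$$(F',C')\leq(F,C)\quad\text{in }\Sal(\A)\qquad\Longleftrightarrow\qquad {F'}^{C'}\leq F^C\quad\text{in }\mathcal{L}(\A_\C).$$
Recall that the left-hand side unpacks (by definition of the order on $\Sal(\A)$) as the two conditions $F\leq F'$ and $F'\circ C=C'$, while the right-hand side unpacks (by the componentwise definition of the order on complex covectors) as $({F'}^{C'})_j\leq(F^C)_j$ for every index $j$. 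So the whole proposition reduces to a finite, component-by-component comparison using the Hasse diagram of the five-element poset $\{0,+,-,i,-i\}$ shown in Figure~\ref{complexpo}.

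The key step is a careful case analysis on the possible values of $(F,C)_j$ and $(F',C')_j$. First I would record the four local states a pair $(F_j,C_j)$ with $F_j\leq C_j$ can take --- namely $(+,+),(-,-),(0,+),(0,-)$ --- together with their images under $f$, namely $+,-,i,-i$ respectively. Then, for the forward direction, assume $F\leq F'$ and $F'\circ C=C'$ and fix $j$. If $(F')_j\neq 0$ then $(F')_j=(F)_j$ (since $F\leq F'$ forces agreement on nonzero entries of $F'$... actually $F\leq F'$ means $F_i\leq F'_i$, so if $F'_j\neq0$ then $F_j=F'_j$), hence $({F'}^{C'})_j=F'_j=F_j=(F^C)_j$ and the inequality holds trivially. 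If $(F')_j=0$, then $({F'}^{C'})_j\in\{i,-i\}$ is determined by $(C')_j=(F'\circ C)_j=(C)_j$; meanwhile $(F^C)_j$ is either $F_j\in\{+,-\}$ (agreeing in sign with $C_j$ since $F_j\leq C_j$) or $\pm i$ (again agreeing with $C_j$), and in the poset $\{0,+,-,i,-i\}$ one checks directly that $+<i$, $-<-i$, $i\leq i$, $-i\leq-i$, so $(F^C)_j\leq({F'}^{C'})_j$ in every subcase. The converse direction runs the same table backwards: from ${F'}^{C'}\leq F^C$ componentwise one reads off, index by index, that $F_j\leq F'_j$ (so $F\leq F'$) and that $(C')_j=(F'\circ C)_j$ (so $F'\circ C=C'$); the only mildly delicate point is that in the poset $\{0,+,-,i,-i\}$ the element $i$ covers both $+$ and $-$, so knowing ${F'}^{C'}_j=i$ does not by itself fix the sign of $C_j$ --- but in that situation $(F')_j=0$, and the bijectivity already proved (specifically the reconstruction formula in the surjectivity lemma) pins $(C)_j$ down as the sign matching $i$, namely $+$.

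The main obstacle --- really the only place requiring attention rather than bookkeeping --- is keeping the two uses of the symbol $\leq$ straight: the partial order on $\Sal(\A)$ reverses the face order in its first coordinate ($(F',C')\leq(F,C)$ requires $F\leq F'$), whereas the order on complex covectors is "honest" and componentwise. The map $f$ must therefore convert the reversal cleanly, and the reason it does is exactly that passing from a face $F$ to the bigger face $F'$ (fewer sign constraints) corresponds, entry by entry, to moving \emph{up} from $+$ or $-$ to $i$ or $-i$ in Figure~\ref{complexpo} --- i.e.\ the "imaginary" elements sit above the "real" ones, which is precisely what makes a lower-dimensional cell of $\Sal(\A)$ (more zeros in $F$) map to a larger complex covector. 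Once this is observed, the verification that $f$ is a poset isomorphism is routine, and the proposition follows.
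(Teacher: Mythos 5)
Your approach---use the bijection $f$ established in the two preceding lemmas and verify order-preservation coordinate by coordinate---is the same one the paper takes, but your forward-direction case analysis contains errors that break the argument. You correctly record at the outset that the target is $(F',C')\leq(F,C)$ iff ${F'}^{C'}\leq F^C$, so in the forward direction you must show $({F'}^{C'})_j \leq (F^C)_j$ for every $j$; instead you conclude ``$(F^C)_j\leq({F'}^{C'})_j$,'' the reverse. This stems from two mistakes in the case split. When $(F')_j\neq 0$, the hypothesis $F\leq F'$ (in the sign poset with $0<+$ and $0<-$) gives only $F_j\in\{0,(F')_j\}$, not $F_j=(F')_j$; in the subcase $F_j=0$ that you missed, $(F^C)_j$ is imaginary while $({F'}^{C'})_j=(F')_j$ is real, so they are unequal, but the needed inequality $({F'}^{C'})_j<(F^C)_j$ does hold. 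When $(F')_j=0$, you allow $F_j\in\{+,-\}$, yet $F_j\leq(F')_j=0$ forces $F_j=0$; the comparisons $+<i$ and $-<-i$ you invoke correspond to configurations that cannot occur under the hypothesis, and the one configuration that does occur gives equality $(F^C)_j=({F'}^{C'})_j$, since $C_j=(F'\circ C)_j=(C')_j$.

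Your closing intuition carries the same reversal: passing from a face $F$ to a bigger face $F'$ replaces zero entries of the covector by signed ones, which moves the corresponding entries of the image \emph{down} from $\{i,-i\}$ to $\{+,-\}$; it is the reversal $F\leq F'$ built into the definition of $(F',C')\leq(F,C)$ that converts this into order-\emph{preservation} for $f$. Redoing the case split correctly (nonzero equal signs; $F_j=0$ with $(F')_j\neq0$; $F_j=(F')_j=0$) yields the desired inequality, and together with your essentially sound backward direction gives the poset isomorphism; this is precisely what the paper compresses into the comparison of Figures~\ref{compwisepo}(a) and~\ref{compwisepo}(b).
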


\begin{proof}
Since there is a bijection between $\Sal(\A)$ and $\mathcal{L}(\A_\C)$, all that 
we need to show is that the function $f$ preserves order.  Let $(F,C) \in 
\Sal(\A)$.  Then since $F \leq C$, $(F_j,C_j)$ is one of $(0,+), (0,-), (+,+), 
(-,-)$.  Furthermore, the ordering $(F,C) \leq (F',C')$ defined by $F' \leq F$ 
and $F \circ C' = C$ can be obtained from a coordinate-wise ordering according 
to Figure \ref{compwisepo}(a).  Since $f$ is defined coordinate-wise via

\begin{center}
$\begin{array}{lll}
(0,+) & \longrightarrow & i\\
(0,-) & \longrightarrow & -i\\
(+,+) & \longrightarrow & +\\
(-,-) & \longrightarrow & -
\end{array}$
\end{center}

\noindent{it is obvious when we compare Figures \ref{compwisepo}(a) and 
\ref{compwisepo}(b) that $f$ preserves order.  Therefore, $\Sal(\A)$ and 
$\mathcal{L}(\A_\C)$ are isomorphic as posets.}
\end{proof}

The above theorem implies that we may use the set of nowhere zero complex 
covectors $\mathcal{L}(\A_\C)$ as an alternate description of $\Sal(\A)$. 

\end{paragraph}

\end{chapter}


\begin{chapter}{Orbit Complexes}\label{chap6}

Let $\A$ be a reflection arrangement in $\R^\ell$ with reflection group $W$.  
Recall that $S_\A'$ is the simplicial decomposition of $S^{\ell-1}$ determined 
by $\A$.  Also recall that in Chapter \ref{chap3}, we proved that $S_\A'$ can be 
constructed to be $W$-invariant.

\bigskip

The action of $W$ on $\A$ determines a group action on $\A_\C$, the complexified 
version of $\A$.  To be more precise, let $z=x+iy \in \C^\ell$, then each $s \in 
W$ acts on $z$ via
	$$s(z) = s(x+iy) = s(x)+is(y).$$
Now, let $(F,C) \in \Sal(\A)$.  Then each $s \in W$ acts on $z(F,C)$ via
	$$s(z(F,C)) = s(x(F))+is(x(C)).$$
Since $S_\A'$ is $W$-invariant and $W$ acts as above, the vertex set of 
$|\Sal'(\A)|$ is $W$-invariant.
\bigskip

Since $W$ preserves the order of the faces of $\Sal(\A)$ and each $s \in W$ is 
linear, it is clear that each $s \in W$ will map a simplex to a simplex.  
This implies that $|\Sal'(\A)|$ is also $W$-invariant.

\bigskip

Using Theorem \ref{hmtpy}, one can construct a homotopy 
	$$H: M(\A) \times [0,1] \to M(\A)$$
between the identity map on $M(\A)$ and a retraction onto $|\Sal'(\A)|$.  In 
\cite{SA2}, M. Salvetti constructed such an $H$ using a collection of 
straight-line homotopies.  So, we can choose $H$ to be $W$-\emph{equivariant} on 
$M(\A)$, since $W$ acts linearly.  That is, we can choose $H$ such that
	$$H(s(z)) = s(H(z))$$
for any $s \in W$ and $z \in M(\A)$.  This implies that $H$ induces a 
well-defined homotopy 
	$$\hat{H}: M(\A)/W \times [0,1] \to M(\A)/W$$
from the identity map on $M(\A)/W$ to a retraction onto $|\Sal'(\A)|/W$.  Thus, 
we have the following result.  

\begin{theorem}
$M(\A)/W$ and $|\Sal'(\A)|/W$ are homotopy equivalent. \hfill $\Box$
\end{theorem}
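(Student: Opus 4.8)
The plan is to promote the homotopy equivalence of Theorem~\ref{hmtpy} to the quotient by realizing $|\Sal'(\A)|/W$ as a deformation retract of $M(\A)/W$. First I would recall Salvetti's explicit construction in \cite{SA2}: there is a deformation retraction $H\colon M(\A)\times[0,1]\to M(\A)$ with $H(\cdot,0)=\mathrm{id}_{M(\A)}$, with image $H(M(\A)\times\{1\})=|\Sal'(\A)|$, and with $H(z,1)=z$ for all $z\in|\Sal'(\A)|$, assembled by concatenating finitely many straight-line homotopies indexed by the combinatorial data of $\A$ (faces, chambers, and the chosen barycenters $x(F)$).

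Next I would check that this $H$ can be taken $W$-equivariant. The relevant points are: (i) the simplicial decomposition $S_\A'$, hence $B_\A'$ and the barycenter assignment $F\mapsto x(F)$, was constructed to be $W$-invariant in Chapter~\ref{chap3}; (ii) each $s\in W$ acts linearly on $\C^\ell$, preserves $M(\A)$, preserves the covector order, and carries $z(F,C)$ to $z(sF,sC)$, so the combinatorial indexing of Salvetti's straight-line stages is permuted by $W$ compatibly with the geometry; (iii) a straight-line homotopy between $W$-equivariant endpoint maps is again $W$-equivariant, since $s$ sends the segment from $p$ to $q$ to the segment from $s(p)$ to $s(q)$. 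Putting these together, each stage of $H$ intertwines the $W$-action, so $H(s(z),t)=s(H(z,t))$ for all $s\in W$, $z\in M(\A)$, $t\in[0,1]$.

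Granting equivariance, $H$ descends: because $H(s(z),t)$ and $H(z,t)$ lie in the same $W$-orbit, the composite $M(\A)\times[0,1]\xrightarrow{H}M(\A)\to M(\A)/W$ factors through $(M(\A)/W)\times[0,1]$, giving a continuous $\hat H\colon (M(\A)/W)\times[0,1]\to M(\A)/W$ (continuity from the quotient topology and the fact that $W$ is finite, so the quotient maps are open). Then $\hat H(\cdot,0)=\mathrm{id}$, the image of $\hat H(\cdot,1)$ is the image of the $W$-invariant set $|\Sal'(\A)|$ in $M(\A)/W$, namely $|\Sal'(\A)|/W$, and $\hat H$ fixes that image pointwise since $H$ fixed $|\Sal'(\A)|$ pointwise. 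Hence $\hat H$ is a deformation retraction of $M(\A)/W$ onto $|\Sal'(\A)|/W$, and in particular these two spaces are homotopy equivalent.

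The main obstacle I expect is step~(ii)--(iii): confirming that no choice in Salvetti's construction breaks the $W$-symmetry, i.e. that the barycenters, the order of the straight-line stages, and the ``push directions'' can all be chosen $W$-invariantly. This is exactly what the $W$-invariance of $S_\A'$ together with the linearity of the $W$-action provide, but it must be spelled out stage by stage. A secondary point worth a remark is that $|\Sal'(\A)|/W$ is genuinely a cell complex: $W$ acts on $|\Sal'(\A)|$ via the identifications $s\cdot z(\Omega)=z(s\Omega)$, and since $W$ acts linearly and order-preservingly this orbit space inherits a (regular cell, after subdivision) structure, so the conclusion ``homotopy equivalent'' is the statement one wants to record.
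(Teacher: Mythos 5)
Your proposal follows the same route as the paper: invoke Salvetti's straight-line construction of the deformation retraction from \cite{SA2}, observe that it can be chosen $W$-equivariant because $W$ acts linearly and $S_\A'$ (hence the barycenters and the combinatorial stages) was built $W$-invariantly, and then descend the homotopy to the quotient $M(\A)/W$. The paper states this very tersely just before the theorem; your version fills in the same steps in more detail but does not depart from the argument.
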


The next corollary follows from Proposition \ref{homeo}.

\begin{corollary}\label{cor}
$M(\A)/W$ and $|\Sal(\A)|/W$ are homotopy equivalent.  \hfill $\Box$
\end{corollary}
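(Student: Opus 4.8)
The plan is to deduce Corollary~\ref{cor} directly from the theorem immediately preceding it together with Proposition~\ref{homeo}, the only extra work being to check that the homeomorphism involved is $W$-equivariant and hence descends to orbit spaces. First I would recall that $|\Sal'(\A)|$ is by construction the barycentric subdivision of the regular cell complex $|\Sal(\A)|$: indeed $|\Sal(\A)|$ was obtained from $|\Sal'(\A)|$ by amalgamating the simplices $z(\Omega)$, so that $\bigcup_{\omega}|\omega|=\bigcup_{\Omega}z(\Omega)=|\Sal'(\A)|$ and the two have the same underlying point set in $\C^\ell$, carrying two different cellular structures. Proposition~\ref{homeo} already records that they are homeomorphic as topological spaces.

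Next I would verify that this homeomorphism is $W$-equivariant. Since $W$ acts linearly on $\C^\ell$, and since (as noted at the opening of this chapter) the vertex set $\{z(F,C)\}$ of $|\Sal'(\A)|$ is $W$-invariant with $s\cdot z(F,C)=z(sF,sC)$, each $s\in W$ carries simplices of $|\Sal'(\A)|$ to simplices and cells of $|\Sal(\A)|$ to cells; thus the identity map of the common underlying space intertwines the two $W$-actions, i.e.\ it is a $W$-equivariant homeomorphism $|\Sal(\A)|\to|\Sal'(\A)|$. A $W$-equivariant homeomorphism induces a homeomorphism of orbit spaces, so $|\Sal(\A)|/W\cong|\Sal'(\A)|/W$. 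Combining this with the preceding theorem, which gives $M(\A)/W\simeq|\Sal'(\A)|/W$, we obtain $M(\A)/W\simeq|\Sal(\A)|/W$, as claimed.

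The one place where I would be careful, and which I expect to be the main obstacle, is the equivariance of the subdivision homeomorphism: one must make sure that the barycenter choices implicit in passing from $|\Sal(\A)|$ to $|\Sal'(\A)|$ are compatible with the $W$-action. This is exactly where the $W$-invariant construction of $S_\A'$, and hence of the vertices $z(F,C)$, from Chapter~\ref{chap3} does the work, so that no genuinely new argument is needed; the remainder is the formal fact that taking a homeomorphism and passing to orbit spaces commute.
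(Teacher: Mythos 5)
Your proposal is correct and follows the paper's own route: the paper likewise deduces the corollary from the preceding theorem together with Proposition~\ref{homeo} (that a regular cell complex and its barycentric subdivision are homeomorphic). You have merely spelled out the $W$-equivariance of that homeomorphism, which the paper leaves implicit but which is indeed justified by the $W$-invariant choices of barycenters made in Chapter~\ref{chap3}.
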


Now, we will discuss how this relates to the braid arrangement.  Recall that the 
finite reflection group corresponding to $\A_{\ell-1}$ is $S_\ell$.  
Also, recall that $M(\A_{\ell-1})$ is homotopy equivalent to $\hat{F}_\ell(C)$.  
Clearly, $S_\ell$ acts on $\hat{F}_\ell(C)$ by permuting the labels in each 
point configuration.  That is, all labelled configurations of the same $\ell$ 
points are in the same orbit under the action of $S_\ell$.  In other words, 
$S_\ell$ ``removes" the labels from the point configurations.  So, all of the 
labelled configurations from Figure \ref{labelled} are in the same orbit under 
the action of $S_3$. 

\begin{definition}
Define $F_\ell(\C)$ to be the space of unlabelled configurations of $\ell$ 
distinct points in $\C$.
\end{definition}

\begin{example}
Figure \ref{unlabelled} shows two examples of unlabelled configurations of 
$F_3(\C)$.
\begin{figure}[h]
\begin{center}

\includegraphics[width=4cm]{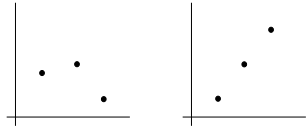}

\caption{Unlabelled point configurations of $F_3(\C)$}
\label{unlabelled}
\end{center}
\end{figure}
\end{example}

\begin{proposition}
$\hat{F}_\ell(\C)/S_\ell$ is homeomorphic to $F_\ell(\C)$.  \hfill $\Box$
\end{proposition}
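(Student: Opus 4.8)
The plan is to exhibit the natural ``forget the labels'' map and show that it is a quotient map whose point-fibers are exactly the $S_\ell$-orbits. Define
\[
p \colon \hat{F}_\ell(\C) \to F_\ell(\C), \qquad p(z_1,\ldots,z_\ell) = \{z_1,\ldots,z_\ell\}.
\]
Since a point of $\hat{F}_\ell(\C)$ is an ordered $\ell$-tuple of \emph{distinct} complex numbers, the set $\{z_1,\ldots,z_\ell\}$ has exactly $\ell$ elements, so $p$ is well defined, and it is visibly surjective. First I would record that $p$ is continuous with respect to the natural topology on $F_\ell(\C)$; indeed $p$ is just the restriction of the orbit map $\C^\ell \to \C^\ell/S_\ell$ to the open set of tuples with distinct entries, and $F_\ell(\C)$ sits inside the symmetric product $\C^\ell/S_\ell$ as the image of that set.

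Next I would identify the fibers of $p$. If $p(z_1,\ldots,z_\ell) = p(w_1,\ldots,w_\ell)$, then $\{z_1,\ldots,z_\ell\} = \{w_1,\ldots,w_\ell\}$ as sets; since both tuples have pairwise distinct entries, there is a unique permutation $\sigma \in S_\ell$ with $w_k = z_{\sigma(k)}$ for all $k$, and conversely any such reordering leaves the underlying set unchanged. Hence the fibers of $p$ are precisely the orbits of the $S_\ell$-action (which, we note in passing, is free). By the universal property of the quotient, $p$ therefore factors as $p = \bar{p} \circ \pi$, where $\pi \colon \hat{F}_\ell(\C) \to \hat{F}_\ell(\C)/S_\ell$ is the orbit map and $\bar{p} \colon \hat{F}_\ell(\C)/S_\ell \to F_\ell(\C)$ is a continuous bijection.

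It remains to see that $\bar{p}$ is a homeomorphism, i.e.\ that $\bar{p}^{-1}$ is continuous; for this it suffices to check that $p$ is an open map. Given an open set $U \subseteq \hat{F}_\ell(\C)$, its saturation
\[
p^{-1}(p(U)) = \bigcup_{\sigma \in S_\ell} \sigma(U)
\]
is a union of finitely many homeomorphic images of $U$, each $\sigma$ acting by a coordinate permutation of $\C^\ell$ (a homeomorphism of $\hat{F}_\ell(\C)$), hence open; so $p(U)$ is open in the quotient topology, which agrees with the topology on $F_\ell(\C)$. Thus $\bar{p}$ carries open sets to open sets and is a homeomorphism.

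The step I expect to require the most care is the last one: pinning down exactly which topology $F_\ell(\C)$ is meant to carry and verifying that $p$ is genuinely a quotient (open) map rather than merely a continuous bijection onto it. Once the openness of $p$ is in hand — which reduces to the standard fact that the orbit map of a finite group acting by homeomorphisms is open — the conclusion is immediate, and it is a routine matter to observe that the resulting $\bar p$ is exactly the map that ``removes the labels,'' matching the description of the $S_\ell$-action on $\hat{F}_\ell(\C)$ given above.
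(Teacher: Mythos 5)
The paper states this proposition without proof, so there is no argument in the text to compare against; your proposal correctly supplies the standard one. The decomposition is right: the forget-the-labels map $p$ is continuous and surjective, its fibers are exactly the $S_\ell$-orbits because distinctness of the entries forces a (unique) permutation relating any two tuples with the same underlying set, and the induced map $\bar p$ on the quotient is a continuous bijection. Your observation that $p$ is open — because the saturation $p^{-1}(p(U)) = \bigcup_{\sigma \in S_\ell} \sigma(U)$ is a finite union of open sets — is exactly the ingredient needed to upgrade the continuous bijection to a homeomorphism, and you correctly identify this as the step requiring care, since it is where the (implicit) definition of the topology on $F_\ell(\C)$ enters. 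One small remark: the freeness of the action, which you note in passing, is not actually needed for the homeomorphism claim, though it is a true and relevant feature of this action (and is used elsewhere when one wants the quotient map to be a covering).
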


The following corollary follows immediately.

\begin{corollary}
$M(\A_{\ell-1})/S_\ell$ is homeomorphic to $F_\ell(\C)$. \hfill $\Box$
\end{corollary}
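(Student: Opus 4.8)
The plan is to observe that the chain of homeomorphisms already assembled in the excerpt is $S_\ell$-equivariant, so that passing to orbit spaces is automatic. First I would recall the homeomorphism $\psi\colon M(\A_{\ell-1})\to\hat{F}_\ell(\C)$ from the Proposition at the end of Chapter~\ref{chap3}: a point $z=(z_1,\dots,z_\ell)\in M(\A_{\ell-1})$ has pairwise distinct coordinates, so it determines the labelled configuration whose point labelled $k$ is $z_k$; conversely every labelled configuration arises this way, and $\psi$ is a homeomorphism.

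Next I would verify that $\psi$ intertwines the two group actions. On $M(\A_{\ell-1})\subseteq\C^\ell$ the reflection group $W=S_\ell$ acts by permuting coordinates, since $s_{H_{ij}}$ transposes the $i^{th}$ and $j^{th}$ components (as noted after Proposition~\ref{P1}) and these transpositions generate $S_\ell$. On $\hat{F}_\ell(\C)$ the group $S_\ell$ acts by permuting labels. For $\sigma\in S_\ell$ and $z\in M(\A_{\ell-1})$, the point $\sigma\cdot z$ has $k^{th}$ coordinate $z_{\sigma^{-1}(k)}$, so the point labelled $k$ in $\psi(\sigma\cdot z)$ is $z_{\sigma^{-1}(k)}$, which is precisely the point labelled $k$ in the relabelling $\sigma\cdot\psi(z)$. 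Hence $\psi(\sigma\cdot z)=\sigma\cdot\psi(z)$, i.e.\ $\psi$ is $S_\ell$-equivariant.

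An equivariant homeomorphism descends to a homeomorphism of orbit spaces: $\psi$ carries $W$-orbits bijectively onto $S_\ell$-orbits, and both the induced map $M(\A_{\ell-1})/S_\ell\to\hat{F}_\ell(\C)/S_\ell$ and its inverse are continuous by the universal property of the quotient topology. Denote this homeomorphism $\bar\psi$. Composing $\bar\psi$ with the homeomorphism $\hat{F}_\ell(\C)/S_\ell\cong F_\ell(\C)$ of the immediately preceding Proposition yields the desired homeomorphism $M(\A_{\ell-1})/S_\ell\cong F_\ell(\C)$.

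There is essentially no obstacle here; the only point requiring care is the bookkeeping in the second paragraph, namely checking that ``permute the coordinates of $z$'' and ``permute the labels of the configuration $\psi(z)$'' are the same operation under $\psi$ (in particular that the indices match up so that we get a genuine left action on both sides rather than an anti-action on one). Everything else is the formal fact that a quotient of an equivariant homeomorphism is a homeomorphism.
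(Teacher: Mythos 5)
Your argument is correct and is essentially the paper's argument, just spelled out: the paper states the corollary ``follows immediately'' from the homeomorphisms $M(\A_{\ell-1})\cong\hat{F}_\ell(\C)$ and $\hat{F}_\ell(\C)/S_\ell\cong F_\ell(\C)$, and you supply exactly the missing equivariance check that makes the descent to orbit spaces legitimate.
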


The above corollary in conjuction with Corollary \ref{cor} imply the following.

\begin{corollary}
$F_\ell(\C)$ and $|\Sal(\A_{\ell-1})|/S_\ell$ are homotopy equivalent. \hfill $\Box$
\end{corollary}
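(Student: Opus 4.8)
The plan is simply to chain the two corollaries immediately preceding the statement, once we have matched up hypotheses. First I would invoke Proposition \ref{P1} to record that the reflection group $W$ associated to the braid arrangement $\A_{\ell-1}$ is the symmetric group $S_\ell$; this licenses applying the earlier results about $M(\A)/W$ with $\A = \A_{\ell-1}$ and $W = S_\ell$. Then Corollary \ref{cor} gives that $M(\A_{\ell-1})/S_\ell$ and $|\Sal(\A_{\ell-1})|/S_\ell$ are homotopy equivalent. Separately, the corollary stating that $M(\A_{\ell-1})/S_\ell$ is homeomorphic to $F_\ell(\C)$ gives, in particular, that these two spaces are homotopy equivalent. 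Composing these homotopy equivalences (homotopy equivalence is transitive) yields that $F_\ell(\C)$ and $|\Sal(\A_{\ell-1})|/S_\ell$ are homotopy equivalent, which is exactly the claim.

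The only point requiring genuine care — and the step I would expect to be the main obstacle — is that $\A_{\ell-1}$ is not essential, whereas several of the cited results (Theorem \ref{hmtpy} and its corollaries, and the $W$-equivariant retraction in Chapter \ref{chap6}) are phrased for essential arrangements. To handle this I would pass to the essentialization $\A_{\ell-1}^{\mathrm{ess}}$ obtained, as described in Chapter \ref{chap2}, by intersecting with the hyperplane $x_1 + \cdots + x_\ell = 0$ transverse to the common line $\{x_1 = \cdots = x_\ell\}$. This operation preserves the face poset, hence preserves $\Sal(\A_{\ell-1})$, and is compatible with the $W$-action (since $W$ fixes that line pointwise and preserves its orthogonal complement). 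Moreover $M(\A_{\ell-1}) \cong M(\A_{\ell-1}^{\mathrm{ess}}) \times \C$, so the two complements are $W$-equivariantly homotopy equivalent and the quotients by $S_\ell$ are homotopy equivalent as well. After this reduction, all the cited corollaries apply verbatim.

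Apart from that reduction, no new computation is needed: the substance of the result lives entirely in the earlier chapters (the Salvetti homotopy equivalence, its $W$-equivariant refinement, and the elementary identification of $M(\A_{\ell-1})/S_\ell$ with the unlabelled configuration space $F_\ell(\C)$). So the write-up should be a short paragraph: fix $W = S_\ell$ via Proposition \ref{P1}, reduce to the essential case as above, quote Corollary \ref{cor} and the homeomorphism corollary, and conclude by transitivity of homotopy equivalence.
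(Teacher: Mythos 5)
Your proposal is correct and matches the paper's (implicit) argument exactly: the statement is obtained by composing the immediately preceding corollary, $M(\A_{\ell-1})/S_\ell \cong F_\ell(\C)$, with Corollary \ref{cor}, $M(\A)/W \simeq |\Sal(\A)|/W$, and invoking transitivity of homotopy equivalence. The one thing you add that the paper silently omits is the reduction from $\A_{\ell-1}$ (which is not essential) to its essentialization before applying the results of Chapter \ref{chap5} and Chapter \ref{chap6}; that is a legitimate gap in the paper's exposition, and your fix (intersect with $x_1+\cdots+x_\ell=0$, note the $W$-equivariant splitting $M(\A_{\ell-1}) \cong M(\A_{\ell-1}^{\mathrm{ess}}) \times \C$, and that the face poset and hence $\Sal$ are unchanged) is the standard and correct way to close it.
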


That is, the Salvetti complex of the braid arrangement in $\R^\ell$ modulo the 
action of the symmetric group is a cell complex which is homotopy equivalent to 
the space of unlabelled configurations of $\ell$ distinct points.

\bigskip

Now, we wish to describe the construction of $|\Sal(\A)|/W$ for all arrangements 
in $\R^2$ which have finite reflection groups.  The construction that we give 
can be generalized for higher dimensions.  For a complete proof of the general 
construction, see \cite{SA1}.

\bigskip

Let $\A$ be an essential reflection arrangement in $\R^2$ with reflection group 
$W$.  Then we form the \emph{dual complex} to $\A$, denoted $D(\A)$, in the 
following way.  Fix a base chamber $C_0$.  Pick a point $x(C_0) \in C_0$.  For 
every other chamber $C$ of $\A$ pick $x(C) \in C$ so that the resulting set of 
points is invariant under the action of $W$.  We can do this in the same way 
that we constructed $S_A$ to be $W$-invariant.  These points will be the 
vertices of $D(\A)$.  Now, join a pair of vertices $x(C_i)$ and $x(C_j)$ by an 
edge exactly when $C_i$ and $C_j$ share a common wall.  This is the edge dual to 
the face of $\A$ which it crosses.  Finally, form the $2$-cell bounded by the 
polygon of dual edges.  This is the $2$-cell dual to $\{0\}$.  It should be 
clear that if we construct $D(\A)$ in this way, then $D(\A)$ is $W$-invariant.  

\begin{example}
Figure \ref{dual} depicts the dual complex of $\A_2$.  Recall that this 
picture of $\A_2$ is a projection onto the plane defined by
	$$x_1+x_2+x_3=0$$
perpendicular to the line 
	$$\{x_1=x_2=x_3\}.$$
\begin{figure}[h]
\begin{center}

\includegraphics[width=6cm]{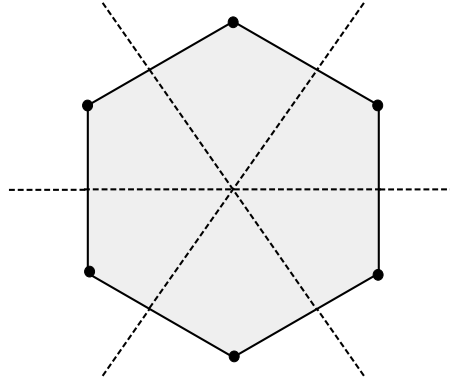}

\caption{Dual complex for $\A_2$}
\label{dual}
\end{center}
\end{figure}
\end{example}

Now, we form the cell complex $|\Sal(\A)|/W$ from $D(\A)$ in the following way:
\begin{enumerate}[label=\rm{(\roman*)}]
\item  Identify all vertices of $D(\A)$.
\item  Identify the edges dual to the faces $F$ and $F'$ exactly when $F$ 
and $F'$ are in the same orbit under the action of $W$.
\end{enumerate}
The resulting (non-regular) cell complex is homeomorphic to $|\Sal(\A)|/W$ and 
is homotopy equivalent to $M(\A)/W$.

\begin{example}\label{construction}
Again, consider $D(\A_2)$.  If we label the two edges dual to $F_1$ and $F_2$ 
connected to $x(C_0)$ as $a$ and $b$, respectively, as in Figure 
\ref{orbitcomplex1}, then we see that there are two orbits of edges:  the 
$a$-type edges and the $b$-type edges.  This is consistent with the fact that 
the walls of the base chamber generate the reflection group.  If we identify all 
of the vertices and the edges based on type, then we will get a cell complex 
with one vertex and two edges.  See Figure \ref{orbitcomplex2}.  The resulting 
orbit complex will also have one $2$-cell bounded by the two edges.  In this 
special case, the resulting cell complex is homotopy equivalent to $F_3(\C)$.

\bigskip  

We now want to consider an orientation of the edges of $D(\A_2)$.  If $F$ is a 
codimension $1$ face of $\A_2$, then $F$ is a face of exactly two chambers, say 
$C$ and $C'$.  So, $(F,C)$ and $(F,C')$ are both elements of $\Sal(\A_2)$.  
Also, $|(F,C)|$ and $|(F,C')|$ are both edges of $|\Sal(\A_2)|$ with $|(C,C)|$ 
and $|(C',C')|$ as their vertices.  We orient each edge $|(F,C)|$ of 
$|\Sal(\A)|$ so that the edge points toward the vertex $|(C,C)|$.  It is easily 
verified that each $2$-cell of $|\Sal(\A_2)|$ is bounded by six edges.  We claim 
that the orientation of the edges of $D(\A_2)$ given in Figure 
\ref{orbitcomplex1} is the same as that of the $2$-cell $|(\{0\},C_0)|$.  This 
orientation determines the relation 
	$$aba = bab.$$
We could have chosen any of the $2$-cells of $|\Sal(\A_2)|$ to determine the 
orientation of the edges of $D(\A_2)$.  All of the $2$-cells would determine the 
same relation given by $|(\{0\},C_0)|$.  The relation $aba = bab$ is the 
relation in the presentation for the fundamental group of the orbit 
complex $|\Sal(\A_2)|/S_3$.  It is also the relation in the presentation for the 
braid group on three strands, which is isomorphic to the fundamental group of 
the unlabelled configuration space $F_3(\C)$.  Figure \ref{strands1} shows two 
braids, each on three strands.  The braid labelled $a$ corresponds to the edge 
$a$ of $D(\A_2)$.  Likewise, the braid labelled $b$ corresponds to the edge $b$ 
of $D(\A_2)$.  Then Figure \ref{strands2} captures the identity $aba = bab$.  
\begin{figure}[h]
\begin{center}

\includegraphics[width=6cm]{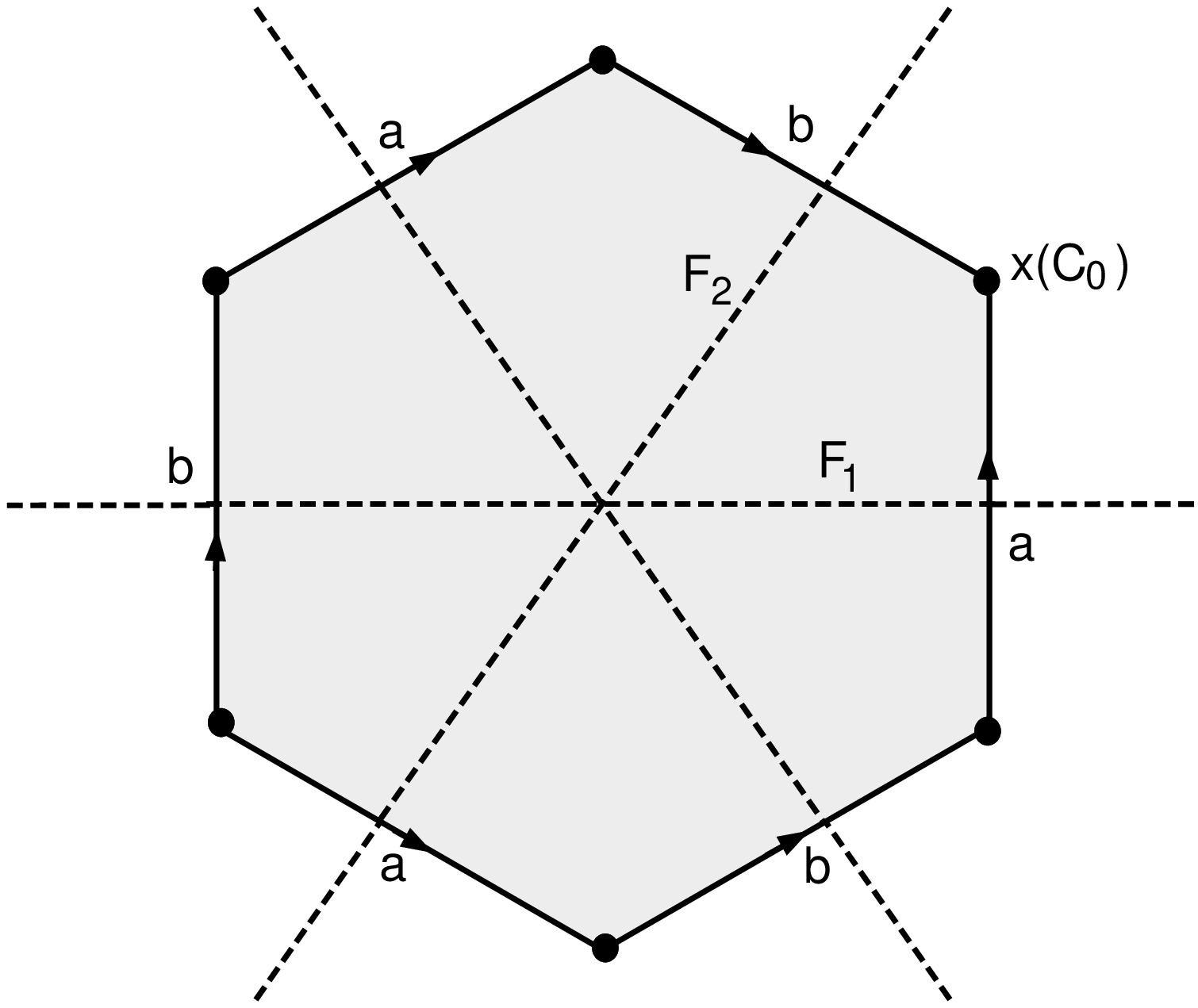}

\caption{Construction of $|\Sal(\A_2)|/S_3$ from $D(\A_2)$}
\label{orbitcomplex1}
\end{center}
\end{figure}

\begin{figure}[h]
\begin{center}

\includegraphics[width=6cm]{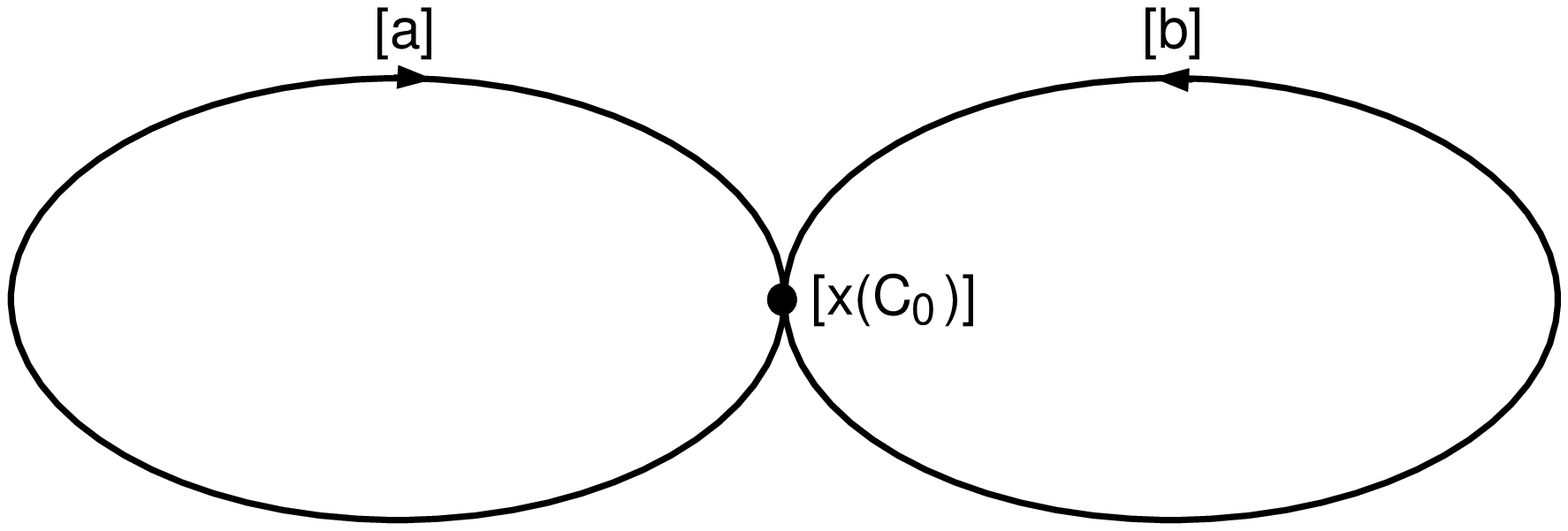}

\caption{The orbit complex $|\Sal(\A_2)|/S_3$}
\label{orbitcomplex2}
\end{center}
\end{figure}

\begin{figure}[h]
\begin{center}

\includegraphics[width=7.5cm]{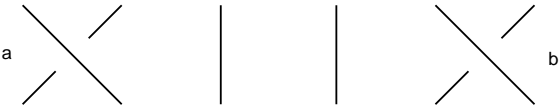}

\caption{Braids on three strands}
\label{strands1}
\end{center}
\end{figure}

\begin{figure}[h]
\begin{center}

\includegraphics[width=7.5cm]{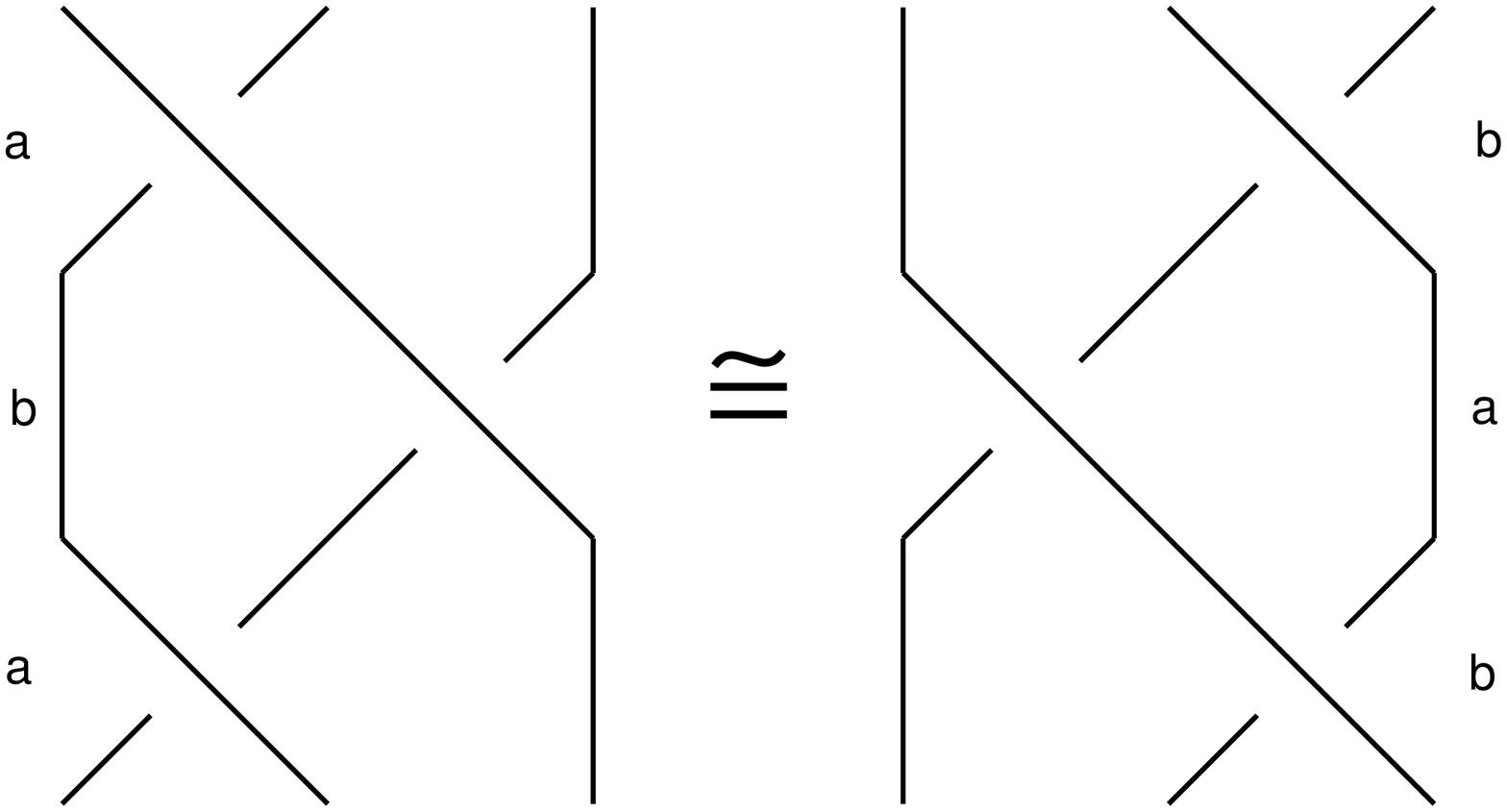}

\caption{The identity $aba=bab$}
\label{strands2}
\end{center}
\end{figure}
\end{example}
 
Now, let us provide an indication as to why this construction works.  First, 
note that if $C$ is a chamber of $\A$, then $|(C,C)|$ and $|(\{0\},C)|$ are both 
cells of $|\Sal(\A)|$.  Also, recall that cells of the form $|(C,C)|$ are the 
vertices of $|\Sal(\A)|$.  And cells of the form $|(\{0\},C)|$ are the top 
dimensional cells of $|\Sal(\A)|$.  In the case where $\A$ is an arrangement in 
$\R^2$, the top dimensional cells are $2$-cells.

\bigskip

The action of $W$ on $|\Sal(\A)|$ is given by
	$$w|(F,C)| = |(wF,wC)|\text{,}$$
where $w \in W$.  So, $W$ acts on the vertices of $|\Sal(\A)|$ in the same way 
that $W$ acts on the vertices of $|\Sal'(\A)|$.  That is, $w \in W$ acts on 
$|(C,C)| \in |\Sal(\A)|$ via
	$$w(|(C,C)|) = w(x(C))+iw(x(C)).$$
Since $W$ acts transitively on chambers, it follows that there is one orbit of 
cells of type $|(C,C)|$.  Also, since $w\{0\} = \{0\}$, there is one orbit of 
cells of type $|(\{0\},C)|$.  Thus, $|\Sal(\A)|/W$ has one vertex and one top 
dimensional cell.

\bigskip

For a $2$-dimensional arrangement $\A$, we can think of each vertex of $D(\A)$ 
as a vertex of $|\Sal(\A)|$, which is of the type $|(C,C)|$, and the $2$-cell of 
$D(\A)$ as the orbit of the top dimensional cells of $|\Sal(\A)|$, which are of 
the type $|(\{0\},C)|$.

\bigskip

Let $F$ be a $1$-codimensional face of an arrangement $\A$ in $\R^2$.  Then $F$ 
is a face of exactly two chambers, say $C$ and $C'$.  So, $(F,C)$ and 
$(F,C')$ are both elements of $\Sal(\A)$.  We claim that $|(F,C)|$ and 
$|(F,C')|$ are in the same orbit under the action of the reflection group.  
Indeed, reflection across the hyperplane containing $F$ will carry $|(F,C)|$ to 
$|(F,C')|$.  We can think of each edge in $D(\A)$ as a pair of edges of 
$|\Sal(\A)|$ of the type described above.  That is, the edge of $D(\A)$ that is 
dual to the face $F$ represents the identified edges $|(F,C)|$ and $|(F,C')|$ of 
$|\Sal(\A)|$.

\bigskip

For $2$-dimensional arrangements, we can think of $D(\A)$ as a ``partially 
identified" $|\Sal(\A)|$, where all of the top-dimensional cells have been 
identified and some of the edges have been identified.  The remaining 
identifications of edges of $|\Sal(\A)|$ can be shown to arise from 
identifications of the edges of $D(\A)$ arising from the action of $W$ as 
described above.

\bigskip

Let $\A$ be a reflection arrangement with group $W$.  Fix a chamber $C_0$.  Let 
$H_1, \ldots, H_n$ be the walls of $C_0$.  Then $W$ is generated by the 
reflections $s_{H_1}, \ldots, s_{H_n}$, according to Lemma \ref{L1}.  Also, 
according to \cite{HU}, $W$ has presentation
$$\langle s_{H_1}, \ldots, s_{H_n}: (s_{H_i})^2=1, s_{H_i}s_{H_j}s_{H_i} \cdots s_{H_i} 
= s_{H_j}s_{H_i}s_{H_j} \cdots s_{H_j} \rangle$$
where $m_{ij}$ factors appear on each side of the second relation for some 
integers $m_{ij}$ with $i \neq j$.  The $m_{ij}$ are easily determined by the 
``Coxeter graph" associated with the type of $W$.

\bigskip

The following well-known result given in \cite{BRI} is easy to see from 
the description of $|\Sal(\A)|/W$ in terms of the dual complex.  Example 
\ref{construction} is the special case with $W$ of type $A_2$. 

\begin{theorem}
Let $\A$ be a reflection arrangement with group $W$.  Then $\pi_1(M(\A)/W)$ has 
presentation
$$\langle g_1, \ldots, g_n: g_ig_jg_i \cdots g_i = g_jg_ig_j \cdots g_j \rangle$$
with $m_{ij}$ factors on each side of the relation. \hfill $\Box$
\end{theorem}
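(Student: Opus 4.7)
The plan is to compute $\pi_1(|\Sal(\A)|/W)$ directly from its CW structure and transfer the result to $M(\A)/W$ via Corollary \ref{cor}. Since the fundamental group of a CW complex depends only on its $2$-skeleton, I need only enumerate the $0$-, $1$-, and $2$-cells of the quotient together with the attaching data of the $2$-cells. Throughout, I fix a base chamber $C_0$ with walls $H_1,\ldots,H_n$ and write $g_i$ for the loop coming from an edge of $|\Sal(\A)|$ dual to the wall $H_i$, oriented as in Example \ref{construction}.

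For the generators: by Lemma \ref{L2}, $W$ is transitive on chambers, so all $0$-cells $|(C,C)|$ of $|\Sal(\A)|$ collapse to a single vertex $v$ in the quotient. By Lemma \ref{L5}, every codimension-$1$ face of $\A$ is $W$-congruent to a unique face of $C_0$ lying on some wall $H_i$; moreover reflection across the hyperplane containing a codimension-$1$ face $F$ identifies the two edges $|(F,C)|$ and $|(F,C')|$ meeting along $F$. Hence the $1$-cells fall into exactly $n$ orbits, yielding loops $g_1,\ldots,g_n$ at $v$. For the relations: by Lemma \ref{L5} again, every codimension-$2$ face of $\A$ is $W$-congruent to some face $F_{ij}\subseteq H_i\cap H_j$ of $C_0$, and the corresponding $2$-cell $|(F_{ij},C_0)|$ of $|\Sal(\A)|$ lies inside the rank-$2$ dihedral subarrangement $\A_{F_{ij}}$ generated by $s_{H_i}$ and $s_{H_j}$ (which has order $2m_{ij}$). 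Applying the explicit $2$-dimensional analysis given in the discussion preceding the theorem and worked out for type $A_2$ in Example \ref{construction}, the oriented boundary of $|(F_{ij},C_0)|$ in the quotient $|\Sal(\A)|/W$ traverses $m_{ij}$ edges of type $g_i$ alternating with $g_j$ on one side and $m_{ij}$ edges of type $g_j$ alternating with $g_i$ on the other, producing exactly the relation
$$g_ig_jg_i\cdots g_i=g_jg_ig_j\cdots g_j$$
with $m_{ij}$ factors on each side. Collecting these and invoking the standard presentation of $\pi_1$ of a CW $2$-complex at the unique vertex delivers the stated presentation.

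The main obstacle is the $2$-dimensional boundary analysis in the rank-$2$ step: one must verify that the restricted $2$-cell $|(F_{ij},C_0)|\subseteq|\Sal(\A)|$ is intrinsically the top-dimensional $2$-cell of $|\Sal(\A_{F_{ij}})|$, that the orientations of its bounding $1$-cells agree with those induced by the ambient convention of Example \ref{construction}, and that the identifications of these boundary edges produced by the ambient action of $W$ coincide with those produced by the dihedral subgroup $\langle s_{H_i},s_{H_j}\rangle$. Granting this compatibility, the rank-$2$ calculation from the dual-complex description — where a $2m_{ij}$-gon in $D(\A_{F_{ij}})$ becomes, after identifying vertices and edges by type, a disk attached along the braid word — ports verbatim into the ambient complex, and no further relations arise because higher-dimensional cells of $|\Sal(\A)|/W$ do not affect $\pi_1$.
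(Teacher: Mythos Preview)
Your proposal is correct and follows exactly the route the paper intends: the paper does not actually prove the theorem but merely asserts it is ``easy to see from the description of $|\Sal(\A)|/W$ in terms of the dual complex,'' citing \cite{BRI} and pointing to Example~\ref{construction} as the $A_2$ instance. Your argument fleshes out that sketch---enumerating the $0$-, $1$-, and $2$-cell orbits via Lemmas~\ref{L2} and~\ref{L5}, and reducing each $2$-cell's attaching map to the rank-$2$ dihedral picture---which is precisely what the paper's dual-complex discussion (restricted there to $\ell=2$, with the general case deferred to \cite{SA1}) is gesturing at. Your candid identification of the compatibility check between the ambient $2$-cell $|(F_{ij},C_0)|$ and the top cell of $|\Sal(\A_{F_{ij}})|$ is the one genuine piece of work the paper leaves implicit; once that is verified (and it follows because the cells below $(F_{ij},C_0)$ in $\Sal(\A)$ depend only on the covector coordinates indexed by $\A_{F_{ij}}$, while the stabilizer of $F_{ij}$ in $W$ is exactly $\langle s_{H_i},s_{H_j}\rangle$ by Lemma~\ref{L4}), your argument is complete and strictly more detailed than what the paper offers.
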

  
\end{chapter}


\end{document}